\newtheorem{theorem}{Theorem}[section]
\newtheorem{lemma}[theorem]{Lemma}
\newtheorem{proposition}[theorem]{Proposition}
\newtheorem{corollary}[theorem]{Corollary}
\theoremstyle{definition}
\newtheorem{remark}{Remark}
\newtheorem{example}{Example}
\numberwithin{equation}{section}
\DeclareMathOperator{\vol}{vol}
\DeclareMathOperator{\di}{div}
\author{Lixia Yuan}
\address{
School of Mathematics and Physics\\
Shanghai Normal University\\
Shanghai, China}
\email{yuanlixia@shnu.edu.cn}
\author{Wei Zhao*}
 \thanks{*  Corresponding author.}
\address{
Department of Mathematics\\
East China University of Science and Technology\\
Shanghai, China}
\email{szhao\underline{ }wei@yahoo.com}
\author{Yibing Shen}
\address{
School of Mathematics\\
Zhejiang University\\
Hangzhou, China}
\email{yibingshen@zju.edu.cn}
\keywords{ Hardy inequality; Rellich inequality; nonreversible Finsler manifold;
sharp constant}
\subjclass[2010]{Primary 53C23, Secondary 35R06, 53C60}
\begin{document}

\title[]{Improved Hardy and Rellich inequalities on nonreversible Finsler manifolds}

\begin{abstract}
In this paper, we study the sharp constants of quantitative Hardy and Rellich inequalities on nonreversible Finsler manifolds equipped with arbitrary measures.
In particular, these inequalities can be globally refined by adding remainder terms like the
Brezis-V\'azquez improvement, if Finsler manifolds are of strictly negative flag curvature, vanishing S-curvature and finite uniformity constant. Furthermore, these results remain valid when Finsler metrics are reversible.
\end{abstract}
\maketitle

\section{Introduction} \label{sect1}

Let $\Omega$ be either $\mathbb{R}^n$ or a bounded domain in $\mathbb{R}^n$ containing $0$. Then the Hardy and  Rellich inequalities can be stated as follows, respectively: for all $u\in C^\infty_0(\Omega)$,
\begin{align*}
\int_\Omega |\nabla u|^2 dx&\geq \frac{(n-2)^2}{4}\int_\Omega \frac{u^2}{|x|^2}dx,   \text{ if }n\geq 3,\tag{1.1}\label{old1.1}\\
\int_\Omega |\Delta u|^2 dx&\geq \frac{n^2(n-4)^2}{16}\int_\Omega \frac{u^2}{|x|^4}dx,   \text{ if }n\geq 5.\tag{1.2}\label{old1.2}
\end{align*}
Here both the constant $\frac{(n-2)^2}{4}$ and $\frac{n^2(n-4)^2}{16}$ are sharp but never archived, which inspires one to improve these inequalities by adding some nonnegative correction terms to the right-hand side of (\ref{old1.1}) and (\ref{old1.2}). In fact, Brezis-V\'azquez in \cite{BV} showed that if $\Omega$ is bounded, then there exists a constant $C_\Omega>0$ such that
\[
\int_\Omega |\nabla u|^2 dx\geq \frac{(n-2)^2}{4}\int_\Omega \frac{u^2}{|x|^2}dx+C_\Omega\int_\Omega u^2 dx,
\]
while Gazzola-Grunau-Mitidieri \cite{GGM} proved that there are positive constants $C_1$ and $C_2$ so that
\[
\int_\Omega |\Delta u|^2 dx\geq \frac{n^2(n-4)^2}{16}\int_\Omega \frac{u^2}{|x|^4}dx+C_1\int_\Omega \frac{u^2}{|x|^2}dx+C_2\int_\Omega u^2 dx.
\]
We refer to \cite{ACP,AFT,BFT,BV,GGM,VZ} and references therein for more improvements of (\ref{old1.1}) and (\ref{old1.2}).

Hardy and Rellich inequalities have also been investigated in the Riemannian setting. Carron \cite{Ca} obtained some weighted $L^2$ Hardy inequalities on complete, non-compact Riemannian manifolds. Afterwards, Kombe-\"Ozaydin \cite{KO} and Yand-Su-Kong \cite{YSK} established some weighted $L^p$ Hardy and Rellich inequalities and presented various Brezis-Vazquez type improvements on complete Riemannian manifolds.

Finsler geometry is Riemannian geometry without quadratic restriction. In order to present the corresponding results in Finsler geometry, we introduce some notations and notions first.

Let $(M,F)$ be a Finsler manifold. The Finsler metric $F$ is called reversible if $F(x,-y)=F(x,y)$, otherwise it is called nonreversible (or irreversible). Clearly, a Riemannian metric is always a reversible Finsler metric. Let $d_F:M\times M\rightarrow \mathbb{R}$ denote the distance function induced by $F$. It is remarkable that $d_F$ is usually asymmetric, that is, $d_F(p,q)\neq d_F(q,p)$. Thus, given a point $p\in M$, we define forward and backward distance functions (with respect to $p$) as follows:
\[
\rho_+(x):=d_F(p, x),\ \rho_-(x):=d_F( x,p), \ x\in M.
\]
For a reversible metric, $\rho_+$ coincides with $\rho_-$ and hence, we use $\rho(x)$ to denote it.

As far as we know, Krist\'aly-Repov$\breve{\text{s}}$ \cite{KR} first considered  weighted Hardy and Rellich inequalities in the Finsler setting. More precisely, let $(M,F)$ be an $n$-dimensional reversible Finsler-Hadamard manifolds, equipped with the Busemann-Hausdorff measure
$d\mathfrak{m}_{BH}$, with nonpositive flag curvature $\mathbf{K}\leq k\leq 0$ and vanishing S-curvature of $d\mathfrak{m}_{BH}$. Then the Hardy inequality in \cite{KR} can be  stated as follows: for any $\beta\in \mathbb{R}$ with $n-2>\beta$ and all $u\in C^\infty_0(M)$,
\begin{align*}
\int_M \frac{F^2(\nabla u(x))}{\rho^\beta(x)}\,d \mathfrak{m}_{BH}(x)\geq& \frac{(n-2-\beta)^2}4\int_M \frac{u^2(x)}{\rho^{\beta+2}(x)}\,d \mathfrak{m}_{BH}(x)\\
+&\frac{(n-1)(n-\beta-2)}{2}\int_M\frac{u^2(x)}{\rho^{\beta+2}(x)} D_k(\rho(x))\,d \mathfrak{m}_{BH}(x),\tag{1.3}\label{new1.333}
\end{align*}
where
\begin{align*}
D_k(t):=\left\{
\begin{array}{lll}
& 0, & \ \ \ \text{if } k=0, \\
\\
&\sqrt{|k|}\,t\coth(\sqrt{|k|} \, t)-1, & \ \ \ \text{if } k<0.
\end{array}
\right.
\end{align*}
Furthermore, the Rellich inequality on reversible Finsler-Hadamard manifolds in \cite{KR}  reads, for any $\beta\in \mathbb{R}$ with $-2<\beta<n-4$ and every $u\in C^\infty_0(M)$ with $G^\beta_F(u)=0$,
\begin{align*}
\int_M \frac{(\Delta u(x))^2}{\rho^\beta(x)}\,d \mathfrak{m}_{BH}(x)\geq& \frac{(n+\beta)^2(n-4-\beta)^2}{16}\int_M \frac{u^2(x)}{\rho^{\beta+4}(x)}\,d \mathfrak{m}_{BH}(x)\\
+&\frac{(n-1)(n-2)(n+\beta)(n-\beta-4)}{4}\int_M\frac{u^2(x)}{\rho^{\beta+4}(x)} D_k(\rho(x))\,d \mathfrak{m}_{BH}(x),\tag{1.4}\label{new1.444}
\end{align*}
where
\[
G^\beta_F(u):=\int_M \left[u^2(x) \Delta (\rho^{-\beta-2}(x) )- \rho^{-\beta-2}(x) \Delta( u^2(x)) \right]\,d \mathfrak{m}_{BH}(x).
\]
According to \cite{FKV,KR}, the constants $\frac{(n-2-\beta)^2}4$ and $\frac{(n+\beta)^2(n-4-\beta)^2}{16}$  in (\ref{new1.333}) and (\ref{new1.444})   are still sharp but never achieved. So it is natural to study whether these two inequalities can be improved by adding nonnegative items as before.

Also note that there are infinitely many nonreversible Finsler metrics on a manifold.
For example, a Randers metric $F=\alpha+\beta$ is irreversible, where $\alpha$ is
a Riemannian metric and $\beta$ is a $1$-form with $\|\beta\|_\alpha<1$. For a nonreversible Finsler metric $F$, one can use the reversibility $\lambda_F$ \cite{R} and the  uniformity constant $\Lambda_F$ \cite{E} to study its asymmetry, i.e.,
\[
\lambda_F:=\sup_{X\in TM\setminus\{0\}}\frac{F(-X)}{F(X)},\  \Lambda_F:=\sup_{X,Y,Z\in TM\setminus\{0\}}\frac{g_X(Y,Y)}{g_Z(Y,Y)},
\]
where $g$ is the fundamental tensor induced by $F$. It is easy to see that $1\leq \lambda_F\leq \sqrt{\Lambda_F}$. In particular, $\lambda_F=1$ if and only if $F$ is reversible while $\Lambda_F=1$ if and only if $F$ is Riemannian.
One may utilize these two non-Riemannian geometric quantities to study Hardy
and Rellich inequalities on nonreversible Finsler manifolds, but it is usually hard to discuss the sharpness of the constants by this method.
\begin{example}[{\cite[Theorem 4.1]{FKV}}]
Let $(M,F)$ be a nonreversible Finsler-Hadamard manifold $(M,F)$ with  vanishing S-curvature of $d\mathfrak{m}_{BH}$. Suppose that $\Omega$ is an open domain containing $p$. Then for all $u\in C^\infty_0(\Omega)$,
\[
\int_\Omega F^2(\nabla u)\,d\mathfrak{m}_{BH}(x)\geq \mu\int_\Omega\frac{u^2(x)}{\rho_+^2(x)}d\mathfrak{m}_{BH}(x),
\]
where  $0\leq \mu< \frac{(n-2)^2}{4\Lambda_F\lambda^2_F}$.
\end{example}

On the other hand, unlike the Riemannian case, the
measures on a Finsler manifold can be defined in various ways \cite{BBI} and essentially different results
may be obtained. Besides the Busemann-Hausdorff measure, another measure used frequently is so-called the Holmes-Thompson measure, whose properties are much different from those of the Busemann-Hausdorff one, cf. \cite{AlB,AlT}. For instance, these two measures of a Randers metric $F=\alpha+\beta$ are as follows:
\[
d\mathfrak{m}_{BH}=\left(1-\|\beta\|_\alpha^2\right)^\frac{n+1}2 dV_\alpha,\ d\mathfrak{m}_{HT}=dV_\alpha,\tag{1.5}\label{1.3**}
\]
where $dV_\alpha$ is the Riemannian measure induced by $\alpha$. Even in the reversible case, $d\mathfrak{m}_{BH}\leq d \mathfrak{m}_{HT}$ with equality if and only if $F$ is Riemannian. So one may ask that whether the Hardy and Rellich inequalities above hold for the Holmes-Thompson measure or other more general measures. However,
there is less
literature for this subject.

The first goal of this paper is
to study analogues of Hardy and Rellich inequalities on irreversible Finsler manifolds which are valid for all measures.  In particular, the sharpness of constants is discussed. Moreover, the second goal of this paper is to show that there exist global Brezis-V\'azquez improvements of Hardy and Rellich inequalities on general (i.e., both reversible and nonreversible) Finsler spaces.

 Before presenting our main results, we introduce some notations.
Let $(M,F)$ be an $n$-dimensional Finsler manifold as before.
Given $u\in C^\infty_0(M)$, define a measurable function $\rho_{u}$ by
\begin{align*}
\rho_{u}(x):=\left\{
\begin{array}{lll}
& \rho_-(x), & \ \ \ \text{if } u(x)>0, \\
\\
& \rho_+(x), & \ \ \ \text{if } u(x)<0,\\
\\
&\frac12\left[\rho_+(x)+\rho_-(x)\right], & \ \ \ \text{if } u(x)=0.
\end{array}
\right.
\end{align*}
It is not hard to see that $\rho_{u}$ is differentiable almost everywhere and especially, $\rho_{u}=\rho$ for all $u\in C^\infty_0(M)$ in the reversible case. Now we have the following Hardy inequalities.
\begin{theorem}\label{Th11}Let $(M,F)$ be an $n$-dimensional complete non-reversible Finsler manifold equipped with an arbitrary measure $d\mathfrak{m}$. Suppose that   the S-curvature of $d\mathfrak{m}$ vanishes. Given any $\beta\in \mathbb{R}$ with $n-2>\beta$.

(i) If the flag curvature $\mathbf{K}\leq k\leq 0$, then  all $u\in C^\infty_0(M)$,
\begin{align*}
\int_M  \frac{F^{2}(\nabla u)}{\rho^\beta_u(x)}d\mathfrak{m}(x)&\geq\frac{(n-2-\beta)^2}{4} \int_M \frac{u^2(x)}{{\rho^{\beta+2}_u}(x)}d\mathfrak{m}(x)\\
&+\frac{(n-1)(n-2-\beta)}{2}\int_M  \frac{u^2(x)}{{\rho^{\beta+2}_u}(x)}\cdot D_{k}(\rho_u(x))d\mathfrak{m}(x),
\end{align*}
where the constant $\frac{(n-2-\beta)^2}{4} $ is sharp.

(ii) If $\mathbf{K}\leq k<0$ and $\Lambda_F<\infty$, then there exist a constant $C=C(n,\Lambda_F,k)>0$ such that for  all $u\in C^\infty_0(M)$,
\begin{align*}
\int_M  \frac{F^{2}(\nabla u)}{\rho^\beta_u(x)}d\mathfrak{m}(x)&\geq\frac{(n-2-\beta)^2}{4} \int_M \frac{u^2(x)}{{\rho^{\beta+2}_u}(x)}d\mathfrak{m}(x)\\
&+\frac{(n-1)(n-2-\beta)}{2}\int_M  \frac{u^2(x)}{{\rho^{\beta+2}_u}(x)}\cdot D_{k}(\rho_u(x)) d\mathfrak{m}(x)\\
&+\frac{C}{\Lambda_F}\int_M\frac{u^2(x)}{\rho^{\beta}_u(x)}d\mathfrak{m}(x),
\end{align*}
where the constant $\frac{(n-2-\beta)^2}{4}$ is sharp.
\end{theorem}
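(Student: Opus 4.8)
\medskip

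The plan is to reduce everything to a one-dimensional radial computation via the coarea formula, using the forward/backward exponential polar coordinates adapted to the sign of $u$. First I would set $v:=|u|$ and observe that $F(\nabla u)\ge F^*(\dd v)$ pointwise wherever $u\ne 0$ (recall that $F^*$ is the dual metric and that $|u|$ is Lipschitz, so $\dd|u|$ exists a.e.), while the right-hand side only involves $u^2=v^2$; this replaces $u$ by a nonnegative function and lets $\rho_u$ be replaced by a single distance-type function $r$ that equals $\rho_-$ on $\{u>0\}$ and $\rho_+$ on $\{u<0\}$. The key analytic input is the Laplacian comparison for the distance function under $\mathbf K\le k\le 0$ and vanishing S-curvature: for the forward distance $\rho_+$ one has $\Delta\rho_+\ge \frac{n-1}{\rho_+}\,(1+D_k(\rho_+))$ off the cut locus (and a symmetric statement for $\rho_-$ with respect to the reverse metric $\overleftarrow F$, which has the same flag curvature bound and vanishing S-curvature). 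Integrating the identity
\[
\di\!\left(\frac{v^2}{r^{\beta+1}}\nabla r\right)
=\frac{2v\,\langle \dd v,\nabla r\rangle}{r^{\beta+1}}-\frac{(\beta+1)v^2}{r^{\beta+2}}F^2(\nabla r)+\frac{v^2}{r^{\beta+1}}\Delta r
\]
over $M$ (the left side integrates to zero since $v$ has compact support, with the usual care about the cut locus, which has measure zero and across which the relevant distributional inequality goes the right way), and using $F^2(\nabla r)=1$ a.e., yields
\[
(\beta+1-\lambda)\int_M \frac{v^2}{r^{\beta+2}}\,d\mathfrak m
\ge 2\int_M \frac{v\,\langle \dd v,\nabla r\rangle}{r^{\beta+1}}\,d\mathfrak m
+(n-1)\int_M \frac{v^2}{r^{\beta+2}}\,D_k(r)\,d\mathfrak m
\]
for any parameter $\lambda$; here I would carry the free parameter $\lambda$ so that the Cauchy–Schwarz step below can be optimized.

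\medskip

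Next, by the Cauchy–Schwarz inequality (in the form $2ab\le \varepsilon a^2+\varepsilon^{-1}b^2$, applied with the weight chosen to make the divergence identity a perfect square) one estimates
\[
\left|2\int_M \frac{v\,\langle \dd v,\nabla r\rangle}{r^{\beta+1}}\,d\mathfrak m\right|
\le \int_M \frac{F^{*2}(\dd v)}{r^{\beta}}\,d\mathfrak m
+\int_M \frac{v^2\,F^2(\nabla r)}{r^{\beta+2}}\,d\mathfrak m,
\]
and more generally with a scaling constant $\mu>0$ in front of the two terms on the right. Combining this with the previous display, choosing $\lambda$ and $\mu$ to maximize the coefficient of $\int v^2 r^{-\beta-2}$, gives exactly the factor $\tfrac{(n-2-\beta)^2}{4}$ in front of the Hardy term and $\tfrac{(n-1)(n-2-\beta)}{2}$ in front of the $D_k$ term; this is the standard "completing the square'' trick and the arithmetic is routine once the sign of $n-2-\beta>0$ is used. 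Since $F^{*2}(\dd v)\le F^2(\nabla u)$ a.e. and $v^2=u^2$, replacing $v$ by $u$ and $r$ by $\rho_u$ finishes part~(i). The sharpness of $\tfrac{(n-2-\beta)^2}{4}$ I would establish by testing with a family $u_\varepsilon=\rho_u^{-(n-2-\beta)/2\,+\varepsilon}\chi$ (a suitable radial cutoff $\chi$), computing the Rayleigh quotient in forward polar coordinates, and letting $\varepsilon\to 0$; the nonreversibility only affects the volume density in polar coordinates by a bounded factor, so the leading-order asymptotics match the Riemannian/reversible model.

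\medskip

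For part~(ii) I would not complete the square all the way: instead, after the Cauchy–Schwarz step I keep a deficit. When $k<0$ one has $D_k(t)\ge c_1>0$ for $t$ bounded away from $0$ and, crucially, $D_k(t)\sim \sqrt{|k|}\,t$ as $t\to\infty$, so the term $(n-1)(n-2-\beta)\int v^2 r^{-\beta-2}D_k(r)$ already dominates $\mathrm{const}\cdot\int v^2 r^{-\beta-2}$ globally on $\{r\ge 1\}$; on the complementary region $\{r\le 1\}$ one has $r^{-\beta-2}\ge r^{-\beta}$, so it suffices to produce a positive multiple of $\int_{\{r\le 1\}} v^2 r^{-\beta}$. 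This last bound is where the uniformity constant enters: via $\Lambda_F$ one compares $F$ with a background reversible (indeed Riemannian-type) structure to get a genuine Poincaré-type inequality $\int F^2(\nabla u)\,d\mathfrak m\ge \tfrac{c}{\Lambda_F}\int_{\{r\le 1\}} u^2\,d\mathfrak m$ — or, more efficiently, one splits off a small fraction $\delta$ of the Dirichlet energy at the very beginning, applies part~(i) to the remaining $(1-\delta)$ fraction (which only shrinks the sharp constants by $(1-\delta)$, absorbed back using the strict positivity of $D_k$), and uses the $\delta$-fraction together with the lower volume-growth/isoperimetric estimate coming from $\mathbf K\le k<0$ and $\Lambda_F<\infty$ to get the extra $\tfrac{C}{\Lambda_F}\int u^2 r^{-\beta}$. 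I expect the main obstacle to be precisely this bookkeeping: arranging the split so that (a) the sharp constant $\tfrac{(n-2-\beta)^2}{4}$ is \emph{not} degraded in the final inequality, which forces one to reabsorb the $\delta$-loss using the slack in the $D_k$ term rather than in the Hardy term, and (b) the constant $C$ depends only on $n,\Lambda_F,k$ and not on $\beta$ or the manifold — this requires a uniform (in the manifold) lower bound on a localized Poincaré constant, which one gets from the Bishop–Gromov-type volume comparison in the Finsler setting under vanishing S-curvature. Sharpness of $\tfrac{(n-2-\beta)^2}{4}$ in (ii) is then inherited from the same test-function computation as in (i), since the added term is of strictly lower order near the pole.
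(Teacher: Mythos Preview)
Your outline for part~(i) is in the right spirit (integration by parts, Laplacian comparison, completing the square), but the very first reduction is wrong in the nonreversible setting: on $\{u<0\}$ one has $d|u|=-du$, hence $F^*(d|u|)=F^*(-du)$, and there is no inequality $F^*(du)\ge F^*(-du)$ without reversibility (only $F^*(-du)\le\lambda_F F^*(du)$). Likewise, writing ``$F^2(\nabla r)=1$'' with $r=\rho_-$ is false; what holds is $F(\nabla(-\rho_-))=1$, and the Finsler Laplacian is nonlinear so $\Delta\rho_-\ne-\Delta(-\rho_-)$. The paper does not pass to $|u|$ at all. It treats $\{u>0\}$ and $\{u<0\}$ separately and, on $\{u>0\}$, sets $v=u\,\rho_-^{\gamma}$ with $\gamma=(n-2-\beta)/2$ and applies the pointwise convexity inequality $F^{*2}(\eta)\ge F^{*2}(\xi)+2g^*_\xi(\xi,\eta-\xi)$ with $\xi=\gamma v\rho_-^{-\gamma-1}(-d\rho_-)$ and $\eta=du$; the positivity of $u$ is exactly what makes the ``direction'' of $\xi$ match $\nabla(-\rho_-)$. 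The cross term is then rewritten via the \emph{reverse} metric $\widetilde F$ (Lemma~3.1: $\rho_-=\widetilde\rho_+$, $\nabla(-f)=-\widetilde\nabla f$, $\Delta(-f)=-\widetilde\Delta f$), so that the standard forward Laplacian comparison for $\widetilde\Delta\widetilde\rho_+$ applies. Your divergence-identity route can be made to work, but only after these sign/orientation corrections, and then it is essentially the same computation.

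The real gap is in part~(ii). Your splitting argument cannot preserve the sharp constant: applying part~(i) to a $(1-\delta)$-fraction of the energy leaves a deficit $\delta\,\tfrac{(n-2-\beta)^2}{4}\int u^2\rho_u^{-\beta-2}\,d\mathfrak m$ which is strictly more singular at the pole than the target remainder $\int u^2\rho_u^{-\beta}\,d\mathfrak m$, and the $D_k$ term gives no slack there since $D_k(t)=O(t^2)$ as $t\to0$; the sharpness test functions concentrate precisely at the pole, so no reabsorption is possible. The paper's mechanism is completely different and is the main new ingredient: a \emph{refined} pointwise inequality
\[
F^{*2}(\xi+\eta)\ \ge\ F^{*2}(\xi)+2g^*_\xi(\xi,\eta)+\frac{1}{\Lambda_F}\,F^{*2}(\eta)
\]
(Theorem~3.3), proved by a second-order Taylor argument for $t\mapsto F^{*2}(\xi+t\eta)$ together with $\Lambda_{F^*}=\Lambda_F$. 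Plugging this into the part-(i) computation produces, at no cost to the other constants, an extra term $\Lambda_F^{-1}\int F^{*2}(dv)\rho_u^{-(n-2)}\,d\mathfrak m$ with $v=u\rho_u^{\gamma}$. A separate weighted Poincar\'e inequality (Lemma~3.4), proved by a one-dimensional integration in polar coordinates using $\partial_r\log\big(r\mathfrak s_k^{n-1}(r)/r^{n-1}\big)\ge c(n,k)>0$ when $k<0$, converts this into $C\Lambda_F^{-1}\int u^2\rho_u^{-\beta}\,d\mathfrak m$. Sharpness then follows from the same test family as in (i). If you want to repair your plan, the missing idea is precisely this uniformly convex strengthening of the Cauchy--Schwarz step, not an energy split.
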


Now we turn to present our results concerning Rellich inequalities. Given $u\in C^\infty_0(M)$, set
\[
G^\beta_{F,d\mathfrak{m}}(u):=\int_M\left[ u^2(x)\cdot \varrho_{u,\beta}(x)+2\rho^{-\beta-2}_u(x)\cdot\di(u\nabla u)  \right]d\mathfrak{m}(x),
\]
where
\begin{align*}
\varrho_{u,\beta}(x):=\left\{
\begin{array}{lll}
& -\Delta(\rho^{-\beta-2}_-)(x), & \ \ \ \text{if } u(x)>0, \\
\\
& \Delta(-\rho^{-\beta-2}_+)(x), & \ \ \ \text{if } u(x)<0,\\
\\
&\frac12\left[-\Delta(\rho^{-\beta-2}_-)(x) +\Delta(-\rho^{-\beta-2}_+)(x)\right], & \ \ \ \text{if } u(x)=0.
\end{array}
\right.
\end{align*}
One can show that
\[
C^\infty_{0,F,d\mathfrak{m},\beta}(M):=\{u\in C^\infty_0(M):\,G^\beta_{F,d\mathfrak{m}}(u)=0\}.
 \]
 is not empty (see Proposition \ref{nonempty} below). In particular, if $F$ is reversible and $d\mathfrak{m}$ is the Busemann-Hausdorff measure, then $G^\beta_{F,d\mathfrak{m}}(u)$ is exactly $G^\beta_F(u)$. Furthermore, $C^\infty_{0,F,d\mathfrak{m},\beta}(M)=C^\infty_0(M)$ in the Riemannian case. Now we have the following Rellich inequalities.
\begin{theorem}\label{Th22}Let $(M,F)$ be an $n$-dimensional complete non-reversible Finsler manifold equipped with an arbitrary measure $d\mathfrak{m}$. Suppose that  the S-curvature of $d\mathfrak{m}$ vanishes.

(i) If the flag curvature $\mathbf{K}\leq k\leq 0$, then for any $\beta\in \mathbb{R}$ with $-2<\beta<n-4$ and
each $u\in C^\infty_{0,F,d\mathfrak{m},\beta}(M)$,
\begin{align*}
\int_M\frac{(\Delta u)^2}{\rho^\beta_u(x)} d\mathfrak{m}(x)&\geq \frac{(n+\beta)^2(n-4-\beta)^2}{16}\int_M \frac{u^2(x)}{{\rho^{\beta+4}_u}(x)}d\mathfrak{m}(x)\\
&+\frac{(n-1)(n-2)(n+\beta)(n-4-\beta)}{4}\int_M  \frac{u^2(x)}{{\rho^{\beta+4}_u}(x)}\cdot D_{k}(\rho_u(x))d\mathfrak{m}(x),
\end{align*}
where the constant $\frac{(n+\beta)^2(n-4-\beta)^2}{16}$ is sharp.

(ii) If $\mathbf{K}\leq k<0$ and $\Lambda_F<\infty$, then there exist a constant $C=C(n,\Lambda_F,k)>0$ such that for any $\beta\in \mathbb{R}$ with $0\leq \beta<n-2$ and each $u\in C^\infty_{0,F,d\mathfrak{m},\beta}(M)$,
\begin{align*}
\int_M \frac{(\Delta u)^2}{\rho^{\beta}_u(x)}d\mathfrak{m}(x)\geq&\frac{(n+\beta)^2(n-\beta-4)^2}{16}\int_M\frac{u^2(x)}{\rho^{\beta+4}_u(x)}d\mathfrak{m}(x)\\
+&\frac{(n-1)(n-2)(n+\beta)(n-\beta-4)}{4}\int_M \frac{u^2(x)}{\rho^{\beta+4}_u(x)}D_{k}(\rho_u)d\mathfrak{m}(x)\\
+&\frac{(n-2-\beta)(n-2+\beta)C}{2\Lambda_F}\int_M\frac{u^2(x)}{\rho^{\beta+2}_u(x)}d\mathfrak{m}(x)\\
+&\frac{(n-1)(n-2)C}{\Lambda_F}\int_M \frac{u^2(x)}{\rho^{\beta+2}_u(x)}D_{k}(\rho_u)d\mathfrak{m}(x)+\frac{C^2}{\Lambda_F^2}\int_M\frac{u^2(x)}{\rho^{\beta}_u(x)}d\mathfrak{m}(x),
\end{align*}
where the constant $\frac{(n+\beta)^2(n-4-\beta)^2}{16}$ is sharp.
\end{theorem}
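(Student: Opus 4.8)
The plan is to deduce Theorem~\ref{Th22} from the Hardy inequalities of Theorem~\ref{Th11} by a second-order reduction. Write $\phi:=\rho_u^{-\beta-2}$. The membership $u\in C^\infty_{0,F,d\mathfrak m,\beta}(M)$, i.e.\ $G^\beta_{F,d\mathfrak m}(u)=0$, is engineered precisely so that one may integrate by parts against $\phi$: using $\di(u\nabla u)=F^2(\nabla u)+u\,\Delta u$ and the divergence theorem, the vanishing of $G^\beta_{F,d\mathfrak m}(u)$ gives
\[
-\int_M \phi\,u\,\Delta u\,d\mathfrak m \;=\; \int_M \frac{F^2(\nabla u)}{\rho_u^{\beta+2}}\,d\mathfrak m \;+\;\frac12\int_M u^2\,\varrho_{u,\beta}\,d\mathfrak m .
\]
The case distinctions in the definitions of $\rho_u$ and $\varrho_{u,\beta}$ are exactly what make each term on the right well defined almost everywhere: where $u>0$ one works with the backward distance $\rho_-$ (equivalently, the forward distance for the reverse metric $\overleftarrow F$), where $u<0$ with the forward distance $\rho_+$, so that the relevant power of $\rho$ is a monotone function of the appropriate distance and the Finslerian identity $d\rho_\pm(\nabla\rho_\pm)=1$ is available.

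Next I would bound $\varrho_{u,\beta}$ from below by the Laplacian comparison theorem. Since the S-curvature of $d\mathfrak m$ vanishes and $\mathbf K\le k$, one has $\Delta\rho_+\ge\frac{n-1}{\rho_+}\bigl(1+D_k(\rho_+)\bigr)$; applying the same comparison to the reverse metric $\overleftarrow F$, whose flag curvature still satisfies $\overleftarrow{\mathbf K}\le k$ and whose S-curvature still vanishes, gives $\overleftarrow\Delta\rho_-\ge\frac{n-1}{\rho_-}\bigl(1+D_k(\rho_-)\bigr)$. Computing $\Delta(\rho_\pm^{-\beta-2})$ explicitly and inserting these bounds (using $\beta+2>0$) yields, a.e.,
\[
\varrho_{u,\beta}\;\ge\;(\beta+2)(n-\beta-4)\,\rho_u^{-\beta-4}\;+\;(n-1)(\beta+2)\,\rho_u^{-\beta-4}\,D_k(\rho_u).
\]
In case (i), since $\beta<n-4$ one may apply Theorem~\ref{Th11}(i) at parameter $\beta+2$ to $\int_M F^2(\nabla u)\rho_u^{-\beta-2}\,d\mathfrak m$; adding the two estimates and simplifying (with $\tfrac{(n-\beta-4)^2}{4}+\tfrac{(\beta+2)(n-\beta-4)}{2}=\tfrac{(n+\beta)(n-\beta-4)}{4}$ and $\tfrac{(n-1)(n-\beta-4)}{2}+\tfrac{(n-1)(\beta+2)}{2}=\tfrac{(n-1)(n-2)}{2}$) gives
\[
-\int_M \phi\,u\,\Delta u\,d\mathfrak m \;\ge\; \frac{(n+\beta)(n-\beta-4)}{4}\int_M\frac{u^2}{\rho_u^{\beta+4}}\,d\mathfrak m\;+\;\frac{(n-1)(n-2)}{2}\int_M\frac{u^2}{\rho_u^{\beta+4}}\,D_k(\rho_u)\,d\mathfrak m .
\]
The right-hand side is nonnegative in this range, so the weighted Cauchy--Schwarz inequality $\bigl(\int_M\phi\,u\,\Delta u\,d\mathfrak m\bigr)^2\le\int_M\frac{(\Delta u)^2}{\rho_u^\beta}\,d\mathfrak m\cdot\int_M\frac{u^2}{\rho_u^{\beta+4}}\,d\mathfrak m$ then produces the inequality of part (i) after expanding the square and discarding the nonnegative pure-$D_k^2$ contribution.

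For part (ii) I would run the same scheme with Theorem~\ref{Th11}(ii) in place of Theorem~\ref{Th11}(i): each use of it contributes an extra Poincar\'e-type term $\tfrac{C}{\Lambda_F}\int_M u^2\rho_u^{-(\cdot)}\,d\mathfrak m$, and these, combined with the terms obtained by retaining more of the square in the final Cauchy--Schwarz step and, where needed, a second application of Theorem~\ref{Th11}(ii) at parameter $\beta$ to absorb the first-order remainder, assemble into the three correction summands carrying $\tfrac{(n-2-\beta)(n-2+\beta)C}{2\Lambda_F}$, $\tfrac{(n-1)(n-2)C}{\Lambda_F}$ and $\tfrac{C^2}{\Lambda_F^2}$. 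The strict inequality $k<0$ is also what enlarges the admissible range to $0\le\beta<n-2$: then $D_k$ is bounded away from $0$ near infinity, so on the sub-range $n-4\le\beta<n-2$, where the plain Hardy step at level $\beta+2$ is unavailable, the curvature remainder $\int_M u^2\rho_u^{-\beta-4}D_k(\rho_u)\,d\mathfrak m$ by itself supplies a lower bound strong enough to close the argument. Finally, sharpness of $\tfrac{(n+\beta)^2(n-\beta-4)^2}{16}$ follows by specializing to the Riemannian model --- $\mathbb R^n$ for (i) with $k=0$, or the space form of curvature $k$ for (ii) --- where $C^\infty_{0,F,d\mathfrak m,\beta}(M)=C^\infty_0(M)$, the inequality becomes the classical weighted Rellich inequality, and truncations of $\rho^{-(n-4-\beta)/2}$ show the constant cannot be enlarged.

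The two points I expect to cost the most work are: (1) making the integration-by-parts identity and the comparison estimate rigorous across the cut locus and the zero set $\{u=0\}$, since $\rho_\pm$ is only Lipschitz there and the Finsler Laplacian is distributional --- one must check that the singular contributions carry the favorable sign, which is where the asymmetric definition of $\varrho_{u,\beta}$ (via $\Delta(-\rho_+^{-\beta-2})$ for $u<0$, and via the reverse metric for $\rho_-$) is essential; and (2) the bookkeeping in part (ii), where matching the constants exactly and justifying the range $\beta<n-2$ requires tracking several positive remainder terms through the Cauchy--Schwarz step, verifying that no cross term is dropped with the wrong sign, and controlling the iterated use of the improved Hardy inequality.
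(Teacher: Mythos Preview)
Your derivation of the inequality in part~(i) is essentially the paper's proof: the identity coming from $G^\beta_{F,d\mathfrak m}(u)=0$, the Laplacian comparison bound on $\varrho_{u,\beta}$, the Hardy inequality at parameter $\beta+2$, and the final weighted Cauchy--Schwarz step all match Theorem~\ref{Rll1}, Step~1.

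For part~(ii) your plan is in the right spirit but misses the paper's specific mechanism. The paper does not simply rerun part~(i) with the improved Hardy inequality; instead it first proves an auxiliary estimate (Lemma~\ref{de1}) bounding $\int_M \rho_u^{-\beta}\bigl[\Delta u+\tfrac{(n+\beta)(n-\beta-4)}{4}\,u\rho_u^{-2}\bigr]^2\,d\mathfrak m$ from above, then rewrites the key inequality~(4.11) with $\beta$ shifted down by $2$ and applies Young's inequality $ab\le\tfrac{\epsilon}{2}a^2+\tfrac{1}{2\epsilon}b^2$ with the explicit choice $\epsilon=C/\Lambda_F$. It is this parameter shift $\beta\mapsto\beta-2$, not the positivity of $D_k$ away from zero, that produces the enlarged range $0\le\beta<n-2$; your proposed rescue via ``$D_k$ bounded away from $0$ near infinity'' on the sub-range $n-4\le\beta<n-2$ is not how the paper closes the argument, and it is not clear it would suffice.

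The genuine gap is your sharpness argument. The claim is that the constant is sharp on the \emph{given} nonreversible Finsler manifold $(M,F,d\mathfrak m)$, i.e.\ that the infimum of the Rayleigh quotient over $C^\infty_{0,F,d\mathfrak m,\beta}(M)\setminus\{0\}$ equals $\tfrac{(n+\beta)^2(n-\beta-4)^2}{16}$. Specializing to $\mathbb{R}^n$ or a Riemannian space form says nothing about this: those are different manifolds, and being Riemannian they are reversible and fall outside the hypothesis anyway. The paper instead constructs test functions directly on $M$: take $u=\psi\cdot[\max\{\epsilon,\rho_-\}]^{-\gamma}$ with $\gamma=\tfrac{n-4-\beta}{2}$ and $\psi=h\circ\rho_-$ a nonincreasing radial cut-off in the backward distance, verify via Proposition~\ref{nonempty} that such $u$ actually lie in $C^\infty_{0,F,d\mathfrak m,\beta}(M)$ (this nontrivial membership is exactly what the asymmetric definition of $G^\beta_{F,d\mathfrak m}$ and the use of $\rho_-$ are designed to deliver), and then combine a two-sided Laplacian comparison on a small backward ball with the co-area formula for the reverse metric $\widetilde F$ to show the quotient tends to the claimed constant as $\epsilon\to0^+$ and then $r\to0^+$. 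Your proposal does not address how to produce admissible test functions in the nonreversible setting, and without that the sharpness claim cannot be established.
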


We remark here that the manifolds $M$ in Theorem \ref{Th11} and Theorem \ref{Th22} are not necessarily noncompact or simply connected. In fact, all the results remain true if $M$ is a (connected) closed manifold or a (connected) domain containing $p$. Besides, both Theorem \ref{Th11} and Theorem \ref{Th22} remain valid in the reversible case.

The main difficulty here is to deal with the gradient and laplacian of $\rho_-$. Inspired by \cite{BCS,Ot}, our idea is to take advantage of the reverse Finsler metric. It is also remarkable that this method can be employed to solve many other problems concerning nonreversible Finsler manifolds. See Remark \ref{3remark} for instance.  Moreover, we establish a refined inequality about Finsler metrics (Theorem \ref{ineq}), which is genuinely new  and can be viewed as an extension of the  Cauchy-Schwarz inequality in the Finsler setting.

The paper is organized as follows. Section 2 is devoted to preliminaries of Finsler geometry. Theorem \ref{Th11} and Theorem \ref{Th22} are proved in Section 3 and 4, respectively. An example is given in Section 5.

\section{Preliminaries}
In this section, we recall some definitions and properties about Finsler manifolds. See \cite{BCS,Sh1,Sh3} for more details.

Let $M$ be a (connected) $n$-dimensional manifold equipped with a Finsler metric $%
F:TM\rightarrow [0,\infty)$.
Let $(x,y)=(x^i,y^i)$ be a local coordinate system on $%
TM$. Define
\begin{align*}
g_{ij}(x,y):=\frac12\frac{\partial^2 F^2(x,y)}{\partial y^i\partial y^j}, \ G^i(y):=\frac14 g^{il}(y)\left\{2\frac{\partial g_{jl}}{\partial x^k}(y)-\frac{\partial g_{jk}}{\partial x^l}(y)\right\}y^jy^k,
\end{align*}
where $G^i$ is the so-called geodesic coefficient. A smooth curve $\gamma(t)$ in $M$ is called a (constant speed) geodesic if it satisfies
\[
\frac{d^2\gamma^i}{dt^2}+2G^i\left(\frac{d\gamma}{dt}\right)=0.
\]
The Riemannian curvature $R_y$ of $F$ is a family of linear transformations on tangent spaces. More precisely, set
$R_y:=R^i_k(y)\frac{\partial}{\partial x^i}\otimes dx^k$, where
\[
R^i_{\,k}(y):=2\frac{\partial G^i}{\partial x^k}-y^j\frac{\partial^2G^i}{\partial x^j\partial y^k}+2G^j\frac{\partial^2 G^i}{\partial y^j \partial y^k}-\frac{\partial G^i}{\partial y^j}\frac{\partial G^j}{\partial y^k}.
\]
Let $P:=\text{Span}\{y,v\}\subset T_xM$ be a plane. The flag curvature is defined by
\[
\mathbf{K}(P,y):=\frac{g_y\left( R_y(v),v  \right)}{g_y(y,y)g_y(v,v)-g^2_y(y,v)}.
\]

Set $S_xM:=\{y\in T_xM:F(x,y)=1\}$ and $SM:=\cup_{x\in M}S_xM$. The reversibility $\lambda_F$ and the uniformity constant $\Lambda_F$ of $(M,F)$ are defined as follows:
\[
\lambda_F:=\underset{y\in SM}{\sup}F(-y),\ \Lambda_F:=\underset{X,Y,Z\in SM}{\sup}\frac{g_X(Y,Y)}{g_Z(Y,Y)}.
\]
Clearly, ${\Lambda_F}\geq \lambda_F^2\geq 1$. It is easy to see that $\lambda_F=1$ if and only if $F$ is reversible, while $\Lambda_F=1$ if and
only if $F$ is Riemannian.

The average Riemannian metric $\hat{g}$ induced by $F$ is defined by
\[
\hat{g}(X,Y)=\frac{1}{\vol(x)}\int_{S_xM}g_y(X,Y)d\nu_x(y),\ \forall\,X,Y\in T_xM,
\]
where $\vol(x)=\int_{S_xM}d\nu_x(y)$, and $d\nu_x$ is the Riemannian volume form of $S_xM$ induced by $F$.
It is noticeable that
\[
\Lambda^{-1}_F\cdot F^2(X)\leq \hat{g}(X,X)\leq \Lambda_F \cdot F^2(X).\tag{2.1}\label{2.000}
\]

The dual Finsler metric $F^*$ on $M$ is
defined by
\begin{equation*}
F^*(\eta):=\underset{X\in T_xM\backslash \{0\}}{\sup}\frac{\eta(X)}{F(X)}, \ \
\forall \eta\in T_x^*M,
\end{equation*}
which is also a Finsler metric on $T^*M$.
The Legendre transformation $\mathfrak{L} : TM \rightarrow T^*M$ is defined
by
\begin{equation*}
\mathfrak{L}(X):=\left \{
\begin{array}{lll}
& g_X(X,\cdot) & \ \ \ X\neq0, \\
& 0 & \ \ \ X=0.%
\end{array}
\right.
\end{equation*}
In particular, $F^*(\mathfrak{L}(X))=F(X)$. Now let $f : M \rightarrow \mathbb{R}$ be a smooth function on $M$. The
gradient of $f$ is defined as $\nabla f = \mathfrak{L}^{-1}(df)$. Thus,  $df(X) = g_{\nabla f} (\nabla f,X)$.

Let $\sigma:[0,1]\rightarrow M$ be a Lipschitz continuity path from $p,q$. The length of $\sigma$ is defined by
\[
L_F(\sigma):=\int^1_0 F(\dot{\sigma}(t))dt.
\]
Define the distance function $d_F:M\times M\rightarrow [0,+\infty)$ by
$d_F(p,q):=\inf L_F(\sigma)$,
where the infimum is taken over all
Lipshitz continuous paths $\gamma:[a,b]\rightarrow M$ with
$\gamma(a)=p$ and $\gamma(b)=q$. It should be remarked that  $d_F(p,q)\neq d_F(q,p)$ generally unless $F$ is reversible.

The forward and backward metric balls $B^+_p(R)$ and $B^-_p(R)$ are defined by
\[
B^+_p(R):=\{x\in M:\, d_F(p,x)<R\},\ B^-_p(R):=\{x\in M:\, d_F(x,p)<R\}.
\]
If $F$ is reversible, forward metric balls coincide with backward ones.

Given $p\in M$, set $\rho_+(x):=d_F(p,x)$ and $\rho_-(x):=d_F(x,p)$. In general, $\rho_+(x)\neq\rho_-(x)$ unless $F(p,\cdot)$ is reversible, cf. \cite[Exercise 6.3.4]{BCS}. Moreover, it follows from \cite[Lemma 3.2.3]{Sh1} that
\[
F(\nabla \rho_+)=1,\ F(\nabla (-\rho_-))=1, \tag{2.2}\label{1.1}
\]
hold almost everywhere. In the reversible case, we just use $\rho(x)$ to denote $d_F(p,x)$.

Since the metric $d_F$ is usually not symmetric, forward and backward Cauchy sequences are introduced in order to study the completeness of a Finsler manifold.
A sequence $\{x_i\}$ in $M$ is called a forward (resp., backward) Cauchy sequence if, for all $\epsilon>0$, there exists a positive integer $N=N(\epsilon)$ such that when $j>i\geq N$, $d(x_i,x_j)<\epsilon$ (resp., $d(x_j,x_i)<\epsilon$).

A Finsler manifold $(M,F)$ is said to be forward  (resp., backward) complete if every forward (resp., backward) Cauchy sequence converges in $M$. In particular, $(M,F)$ is called complete if it is both forward and backward complete.
The Hopf-Rinow theorem (\cite[Theorem 6.6.1]{BCS}) yields that if $(M,F)$ is forward complete if and only if it is forward geodesically complete, that is, every geodesic $\gamma(t)$, $a\leq t<b$, can be extended to a geodesic defined on $a\leq t<\infty$. Similarly,  $(M,F)$ is  backward complete if and only if it is backward geodesically complete, that is, every geodesic $\gamma(t)$, $a< t\leq b$, can be extended to a geodesic defined on $-\infty< t\leq b$.

Let $d\mathfrak{m}$ be a measure on $M$. In a local coordinate system $(x^i)$,
express $d\mathfrak{m}=\sigma(x)dx^1\wedge\cdots\wedge dx^n$. In particular,
the Busemann-Hausdorff measure $d\mathfrak{m}_{BH}$ and the Holmes-Thompson measure $d\mathfrak{m}_{HT}$ are defined by
\begin{align*}
&d\mathfrak{m}_{BH}:=\frac{\vol(\mathbb{B}^{n})}{\vol(B_xM)}dx^1\wedge\cdots\wedge dx^n,\\
 &d\mathfrak{m}_{HT}:=\left(\frac1{\vol(\mathbb{B}^{n})}\int_{B_xM}\det g_{ij}(x,y)dy^1\wedge\cdots\wedge dy^n \right) dx^1\wedge\cdots\wedge dx^n,
\end{align*}
where $B_xM:=\{y\in T_xM: F(x,y)<1\}$.
Define the distortion of $(M,F,d\mathfrak{m})$ as
\begin{equation*}
\tau(y):=\log \frac{\sqrt{\det g_{ij}(x,y)}}{\sigma(x)}, \text{ for $y\in T_xM\backslash\{0\}$}.
\end{equation*}
And the S-curvature $\mathbf{S}$ is defined by
\begin{equation*}
\mathbf{S}(y):=\frac{d}{dt}[\tau(\dot{\gamma}(t))]|_{t=0},
\end{equation*}
where $\gamma(t)$ is the geodesic with $\dot{\gamma}(0)=y$. Given $h\geq 0$, we say $|\mathbf{S}|\leq (n-1)h$ if
\[
-(n-1)h\cdot F(y)\leq \mathbf{S}(y)\leq (n-1)h  \cdot F(y),\ \forall y\in TM-\{0\}.
\]

Given $y\in S_pM$, let $\gamma_y(t)$, $t\geq 0$ denote the geodesic with $\dot{\gamma}_y(0)=y$. The cut value $i_y$ of $y$ is defined by
\[
i_y:=\sup\{r: \text{ the segment }\gamma_y|_{[0,r]} \text{ is globally minimizing}  \}.
\]
The injectivity radius at $p$ is defined as $\mathfrak{i}_p:=\inf_{y\in S_pM} i_y$, whereas the cut locus of $p$ is
\[
\text{Cut}_p:=\left\{\gamma_y(i_y):\,y\in S_pM \text{ with }i_y<\infty \right\}.
\]
It should be remarked that $\text{Cut}_p$ is closed and null Lebesgue measure.

As in \cite{ZY}, fixing $p\in M$, let $(r,y)$ be the polar coordinate system at $p$. Note that $r(x)=\rho_+(x)$. Given an arbitrary measure $d\mathfrak{m}$, write
\[
d\mathfrak{m}=:\hat{\sigma}_p(r,y)dr\wedge d\nu_p(y),
\]
where $d\nu_p(y)$ is the Riemannian volume measure induced by $F$ on $S_pM$ . If $\mathbf{K}\leq k$, for each $y\in S_pM$, we have
\[
\Delta r=\frac{\partial}{\partial r}\log \hat{\sigma}_p(r,y)\geq \frac{\partial}{\partial r}\log \left( e^{-\tau(\dot{\gamma}_y(r))}\mathfrak{s}^{n-1}_k(r)  \right),\ 0<r<i_y,\tag{2.3}\label{1.2}
\]
and
\[
\lim_{r\rightarrow 0^+}\frac{\hat{\sigma}_p(r,y)}{e^{-\tau(\dot{\gamma}_y(r))}\mathfrak{s}^{n-1}_k(r)}=1,\tag{2.4}\label{new2.3}
\]
where $\gamma_y(t)$ is a geodesic with $\dot{\gamma}_y(0)=y$, and $\mathfrak{s}_k(t)$ is the unique
solution to $f'' + kf = 0$ with $f(0) = 0$ and $f'(0) = 1$.

Now we introduce the co-area formula in the Finsler setting. See \cite[Section 3.3]{Sh1} for more details.
Let $\varphi(x)$ be a  piecewise
$C^1$ function on $M$ such that every $\varphi^{-1}(t)$ is compact. The (area) measure on $\varphi^{-1}(t)$ is defined by
$dA:=(\nabla \varphi)\rfloor d\mathfrak{m}$. Then for any continuous function $f$ on $M$, we have the following co-area formula
\[
\int_Mf\, F(\nabla\varphi)\,d\mathfrak{m}=\int^{\infty}_{-\infty}\left( \int_{\varphi^{-1}(t)}f\, dA\right)dt.\tag{2.5}\label{1.3}
\]
Define the divergence of a vector field $X$ by
\[
\di(X)\, d\mathfrak{m}:=d\left( X\rfloor d\mathfrak{m}\right).
\]
Supposing $M$ is compact and oriented, we have the  divergence theorem
\[
\int_M\di(X)d\mathfrak{m}=\int_{\partial M} g_{\mathbf{n}}(\mathbf{n},X)\,d A,\tag{2.6}\label{1.4}
\]
where $dA=\mathbf{n}\rfloor d\mathfrak{m}$, and $\mathbf{n}$ is the unit outward normal vector field along $\partial M$, i.e., $F(\mathbf{n})=1$ and $ g_{\mathbf{n}}(\mathbf{n},Y)=0$ for any $Y\in T(\partial M)$.

Given a $C^2$-function $f$, set $\mathcal {U}=\{x\in M:\, df|_x\neq0\}$. The Laplacian of $f\in C^2(M)$ is defined on $\mathcal {U}$ by
\begin{align*}
\Delta f:=\text{div}(\nabla f)=\frac{1}{\sigma(x)}\frac{\partial}{\partial x^i}\left(\sigma(x)g^{*ij}(df|_x)\frac{\partial f}{\partial x^j}\right),\tag{2.7}\label{2.6'''}
\end{align*}
where $(g^{*ij})$ is the fundamental tensor of $F^*$. As in \cite{Ot}, we define
the distributional Laplacian of $u\in W^{1,2}_{\text{loc}}(M)$
in the weak sense by
\[
\int_M v{\Delta} u d\mathfrak{m}=-\int_M\langle \nabla u,dv\rangle d\mathfrak{m}, \text{ for all }v\in C^\infty_0(M).\tag{2.8}\label{2.7''}
\]
From (\ref{1.4}), one can see that (\ref{2.7''}) still makes sense when $M$ is a closed manifold (i.e., a compact manifold without boundary).

\section{Hardy inequalities on Finsler manifolds} \label{ns}
In this section, we investigate  Hardy inequalities on general Finsler manifolds and prove Theorem \ref{Th11}. Refer to \cite{FKV,KO,KR,YSK} for the Riemannian and specially Finslerian case.

Given a Finsler metric $F$, the {\it reverse} of $F$ is defined by $\widetilde{F}(x,y):=F(x,-y)$. It is not hard to see that $\widetilde{F}$ is also a Finsler metric. In this paper, we always use $\widetilde{*}$ to denote the quantity $*$ defined by $\widetilde{F}$.

Firstly, we investigate the relationship between $F$ and $\widetilde{F}$, which is important to establish the sharp Hard and Rellich inequalities in the non-reversible Finsler setting. Also see \cite{BCS,Ot} for more details.
\begin{lemma}\label{keylemma} Let $(M,F,d\mathfrak{m})$ be a Finsler manifold, where $d\mathfrak{m}$ is a measure on $M$. Then we have:

\noindent (1) $G^i(y)=\widetilde{G}^i(-y)$, $\mathbf{K}(P,y)=\widetilde{\mathbf{K}}(P,-y)$ and $-\mathbf{S}(y)=\widetilde{\mathbf{S}}(-y)$.

\noindent (2) $d_{F}(p,q)=d_{\widetilde{F}}(q,p)$ and hence, $(M,F)$ is forward (resp. backward) complete if and only if $(M,\widetilde{F})$ is backward (resp. forward)  complete.

\noindent (3) $\nabla (-f)=-\widetilde{\nabla} f$ and $\Delta (-f)=-\widetilde{\Delta}f$ for $f\in C^2(M)$.
\end{lemma}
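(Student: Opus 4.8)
The plan is to verify each of the three assertions by unwinding the definitions of the reverse metric $\widetilde{F}(x,y)=F(x,-y)$ and tracking the sign change $y\mapsto -y$ through the formulas recalled in Section 2. For part (1), I would start from the observation that the fundamental tensor is invariant under this operation in the sense $\widetilde{g}_{ij}(x,y)=g_{ij}(x,-y)$; this follows directly from $\widetilde{g}_{ij}=\tfrac12\partial_{y^i}\partial_{y^j}\widetilde{F}^2$ together with the chain rule, since two $y$-derivatives cancel the sign. Feeding this into the definition of the geodesic coefficients $G^i$ — which is homogeneous of degree $2$ in $y$ and built from $g^{il}$, $\partial g_{jl}/\partial x^k$, $y^j y^k$ — one checks that each factor contributes an even total power of the sign, so $\widetilde{G}^i(y)=G^i(-y)$, equivalently $G^i(y)=\widetilde{G}^i(-y)$. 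The flag curvature identity then follows because $R^i_{\ k}(y)$ is assembled from $G^i$ and its $x$- and $y$-derivatives in a way that is again compatible with $y\mapsto -y$ (the single $y^j$ factor in the second term pairs with the odd number of $y$-derivatives), giving $\widetilde{R}_y(v)=R_{-y}(v)$, and since $g_y(y,y)g_y(v,v)-g_y(y,v)^2$ is clearly invariant, $\mathbf{K}(P,y)=\widetilde{\mathbf{K}}(P,-y)$. For the S-curvature, note $\widetilde{\tau}(y)=\tau(-y)$ because $\sqrt{\det g_{ij}}$ and $\sigma(x)$ are both unchanged (the measure $d\mathfrak{m}$ is the same); since a $\widetilde{F}$-geodesic through $y$ is, after reversing the parameter, an $F$-geodesic through $-y$, differentiating $\widetilde{\tau}(\dot{\widetilde{\gamma}}(t))$ at $t=0$ picks up one extra minus sign from the reparametrization, yielding $\widetilde{\mathbf{S}}(y)=-\mathbf{S}(-y)$, i.e. $-\mathbf{S}(y)=\widetilde{\mathbf{S}}(-y)$.

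For part (2), the key point is that a Lipschitz path $\sigma:[0,1]\to M$ from $p$ to $q$ gives, via $t\mapsto\sigma(1-t)$, a Lipschitz path from $q$ to $p$ whose $\widetilde{F}$-length equals $\int_0^1 \widetilde{F}(\dot\sigma(1-t)\cdot(-1))\,dt = \int_0^1 F(\dot\sigma(s))\,ds = L_F(\sigma)$. Taking infima over the two (bijectively related) families of paths gives $d_{\widetilde F}(q,p)=d_F(p,q)$. The completeness statement is then immediate: a sequence is forward Cauchy for $\widetilde{F}$ exactly when it is backward Cauchy for $F$, by the definitions in Section 2, and convergence is a topological notion unaffected by passing to $\widetilde{F}$ (the forward and backward topologies agree with the manifold topology).

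For part (3), I would argue from the Legendre transform. Since $\widetilde{g}_X(Y,Z)=g_{-X}(Y,Z)$, one has $\widetilde{\mathfrak{L}}(X)=\widetilde{g}_X(X,\cdot)=g_{-X}(X,\cdot)=-g_{-X}(-X,\cdot)=-\mathfrak{L}(-X)$; hence $\widetilde{\mathfrak{L}}^{-1}(\eta)=-\mathfrak{L}^{-1}(-\eta)$. Applying this to $\eta=df$ and using $\widetilde\nabla f=\widetilde{\mathfrak{L}}^{-1}(df)$ gives $\widetilde\nabla f=-\mathfrak{L}^{-1}(-df)=-\mathfrak{L}^{-1}(d(-f))=-\nabla(-f)$, which is the first claim. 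For the Laplacian I would use the weak formulation \eqref{2.7''}: for any test function $v\in C_0^\infty(M)$,
\[
\int_M v\,\widetilde\Delta f\,d\mathfrak{m}=-\int_M \langle \widetilde\nabla f, dv\rangle\,d\mathfrak{m}=-\int_M dv(\widetilde\nabla f)\,d\mathfrak{m}=\int_M dv(\nabla(-f))\,d\mathfrak{m}=-\int_M v\,\Delta(-f)\,d\mathfrak{m},
\]
where the last equality is again \eqref{2.7''} applied to $-f$; since $v$ is arbitrary, $\widetilde\Delta f=-\Delta(-f)$, equivalently $\Delta(-f)=-\widetilde\Delta f$. (On the open set where $df\neq 0$ one can alternatively verify this directly from the coordinate formula \eqref{2.6'''} using $\widetilde{g}^{*ij}(\eta)=g^{*ij}(-\eta)$, which follows by dualizing $\widetilde g_{ij}(x,y)=g_{ij}(x,-y)$.)

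The main obstacle is bookkeeping rather than conceptual: making sure the parity of the sign is tracked correctly through the multi-derivative expressions for $G^i$ and $R^i_{\ k}$, and — in the S-curvature computation — being careful about the extra minus sign produced when one reverses the parametrization of a geodesic to relate $\widetilde F$-geodesics to $F$-geodesics. Everything else reduces to the single algebraic fact $\widetilde g_{ij}(x,y)=g_{ij}(x,-y)$ and its consequences for the dual tensor and the Legendre transform.
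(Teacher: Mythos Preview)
Your proposal is correct and follows essentially the same approach as the paper: both derive everything from the basic identity $\widetilde{g}_{ij}(x,y)=g_{ij}(x,-y)$ and the path-reversal bijection for part (2). The only cosmetic differences are that the paper computes the S-curvature via the horizontal-derivative formula $\mathbf{S}(y)=\langle y^i\tfrac{\delta}{\delta x^i},d\tau\rangle$ rather than your geodesic-reparametrization argument, and obtains $\nabla(-f)=-\widetilde{\nabla}f$ from the dual identity $\widetilde{F}^*(\eta)=F^*(-\eta)$ (hence $\widetilde{g}^{*ij}(\eta)=g^{*ij}(-\eta)$) and the coordinate expression for the gradient, rather than directly through $\widetilde{\mathfrak{L}}(X)=-\mathfrak{L}(-X)$ as you do.
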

\begin{proof}(i) It follows from the definition of $\widetilde{F}$ that $g_{ij}(y)=\widetilde{g}_{ij}(-y)$ and hence, $\tau(y)=\widetilde{\tau}(-y)$, $G^i(y)=\widetilde{G}^i(-y)$ and $\widetilde{R}_y=R_{-y}$, which implies $\mathbf{K}(P,y)=\widetilde{\mathbf{K}}(P,-y)$. In a local coordinate system, we can write
\[
\mathbf{S}(y)=\left\langle y^i\frac{\delta}{\delta x^i},d\tau\right\rangle,
\]
where $\frac{\delta}{\delta x^i}=\frac{\partial}{\partial x^i}-N^k_i \frac{\partial}{\partial y^k}$ and $N^i_j=\frac{\partial G^i}{\partial y^j}$. Since $\widetilde{\tau}(y)=\tau(-y)$, a direct calculation yields $\widetilde{\mathbf{S}}(y)=-\mathbf{S}(-y)$.

\noindent (ii) Fixing $p\in M$, recall that $\widetilde{\rho}_+(x)=d_{\widetilde{F}}(p,x)$ and $\rho_-(x)=d_F(x,p)$.
We claim that $\widetilde{\rho}_+(x)=\rho_-(x)$.

In fact, for any $x\in M$, there exists a sequence of  Lipshitz continuous curves $\tilde{\gamma}_n(t)$ of $\widetilde{F}$, $0\leq t\leq 1$ from $p$ to $x$ such that $L_{\widetilde{F}}(\widetilde{\gamma}_n)\rightarrow \widetilde{\rho}_+(x)$. Set $\gamma_n(t):=\widetilde{\gamma}_n(1-t)$. Then $\gamma_n$ is a Lipshitz continuous curve from $x$ to $p$ and
\begin{align*}
&\widetilde{\rho}_+(x)=\lim_{n\rightarrow \infty}\int^1_0\widetilde{F}(\dot{\widetilde{\gamma}}_n(t))dt=\lim_{n\rightarrow \infty}\int^1_0\widetilde{F}(-\dot{{\gamma}}_n(1-t))dt\\
=&\lim_{n\rightarrow \infty}\int^1_0{F}(\dot{{\gamma}}_n(1-t))dt=\lim_{n\rightarrow \infty}\int^1_0{F}(\dot{{\gamma}}_n(t))dt\geq\rho_-(x).
\end{align*}
The same method yields that $\rho_-(x)\geq \widetilde{\rho}_+(x)$ and hence, the claim is true, which is exactly $d_F(p,q)=d_{\widetilde{F}}(q,p)$. Therefore, a forward (resp., backward) convergent Cauchy sequence in $(M,F)$ is a backward (resp., forward) convergent Cauchy sequence in $(M,\widetilde{F})$, which implies (2).

\noindent (iii) Given $\eta\in T^*M$,
\[
\widetilde{F}^*(\eta)=\underset{X\in T_xM\backslash \{0\}}{\sup}\frac{\eta(X)}{\widetilde{F}(X)}=\underset{X\in T_xM\backslash \{0\}}{\sup}\frac{-\eta(-X)}{F(-X)}=F(-\eta),
\]
which implies that $g^{*ij}(\eta)=\widetilde{g}^{*ij}(-\eta)$. Thus, $\nabla (-f)=-\widetilde{\nabla} f$ follows from
\[
\nabla f=g^{*ij}(df)\frac{\partial f}{\partial x^i}\frac{\partial}{\partial x^j}.
\]
Now (\ref{2.6'''}) together with (\ref{2.7''}) and $\nabla (-f)=-\widetilde{\nabla} f$ yields $\Delta (-f)=-\widetilde{\Delta}f$.
\end{proof}

\begin{remark}\label{firstremark}An interesting phenomena is that  the
Busemann-Hausdorff measures (resp., the Holmes-Thompson measures) of $F$ and $\widetilde{F}$ coincide. For instance, given a Randers metric $F=\alpha+\beta$, it is not hard to see that its reverse metric $\widetilde{F}=\alpha-\beta$. Then it follows from (\ref{1.3**}) that $d\mathfrak{m}_{BH}=d\widetilde{\mathfrak{m}}_{BH}$ and $d\mathfrak{m}_{HT}=d\widetilde{\mathfrak{m}}_{HT}$.
\end{remark}

Given $u\in C(M)$, set
\begin{align*}
\rho_{u}(x):=\left\{
\begin{array}{lll}
& \rho_-(x), & \ \ \ \text{if } u(x)>0, \\
\\
& \rho_+(x), & \ \ \ \text{if } u(x)<0,\\
\\
&\frac12\left[\rho_+(x)+\rho_-(x)\right], & \ \ \ \text{if } u(x)=0.
\end{array}
\right.
\end{align*}
It should be remarked that if $F$ is reversible, $\rho_{u}(x)$ is exactly $\rho(x)$ for any $u\in C(M)$.
Furthermore, given $k,h\in \mathbb{R}$, we define a function $D_{k,h}(t)$ by
\[
D_{k,h}(t):=t\left(\frac{\mathfrak{s}'_k(t)}{\mathfrak{s}_k(t)}-h\right)-1, \text{ for }0< t<\frac{\pi}{2\sqrt{k}}.
\]
Here, $\frac{\pi}{2\sqrt{k}}:=\infty$ if $k\leq 0$.
It is not hard to check that $D_{k,h}(t)$ is non-negative if and only if $k\leq 0$, $h\leq 0$.
In this paper, an open domain is  a connected open set contained in some $n$-dimensional complete  Finsler manifold.
Now we have the following Hardy inequality.

\begin{theorem}\label{Hardy1}
Let $(M,F,d\mathfrak{m})$ be an $n$-dimensional complete Finsler manifold or an open domain containing $p$ with $\mathbf{K}\leq k$ and $|\mathbf{S}|\leq (n-1)h$. For any $\beta\in \mathbb{R}$ with $n-2>\beta$ and any $u\in C^\infty_0(M)$, we have
\begin{align*}
\int_M  \frac{F^{2}(\nabla u(x))}{\rho^\beta_u(x)}d\mathfrak{m}(x)&\geq\frac{(n-2-\beta)^2}{4}\int_M \frac{u^2(x)}{{\rho^{2+\beta}_u}(x)}d\mathfrak{m}(x)\\
&+\frac{(n-1)(n-2-\beta)}{2}\int_M  \frac{u^2(x)}{{\rho^{2+\beta}_u}(x)}\cdot D_{k,h}(\rho_u(x))d\mathfrak{m}(x),
\end{align*}
where the constant ${(n-2-\beta)^2}/{4}$ is sharp if $k\leq 0$ and $h= 0$.
\end{theorem}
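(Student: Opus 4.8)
The plan is to run the classical vector-field proof of weighted Hardy inequalities, organized around the reverse metric $\widetilde F$ so that the backward distance $\rho_-$ is treated on the same footing as the forward distance $\rho_+$. I would first reduce everything to a single ``forward'' building block: for $(M,F,d\mathfrak m)$ satisfying the hypotheses, an open set $\Omega\subseteq M$, and a function $v$ smooth near $\overline\Omega$, compactly supported in $M$, with $v\le0$ on $\Omega$ and $v=0$ on $\partial\Omega\cap M$,
\[
\int_\Omega\frac{F^{2}(\nabla v)}{\rho_+^{\beta}}\,d\mathfrak m\ \ge\ \frac{(n-2-\beta)^2}{4}\int_\Omega\frac{v^{2}}{\rho_+^{\beta+2}}\,d\mathfrak m+\frac{(n-1)(n-2-\beta)}{2}\int_\Omega\frac{v^{2}\,D_{k,h}(\rho_+)}{\rho_+^{\beta+2}}\,d\mathfrak m .
\]
Granting this, I would split $M=\{u>0\}\cup\{u=0\}\cup\{u<0\}$: the set $\{u=0\}$ contributes nothing since $\nabla u=0$ a.e.\ there; on $\{u<0\}$, where $\rho_u=\rho_+$, apply the building block to $v=u$; and on $\{u>0\}$, where $\rho_u=\rho_-=\widetilde\rho_+$, apply it to the triple $(M,\widetilde F,d\mathfrak m)$ and the function $v=-u$. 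Here Lemma \ref{keylemma} is crucial: $(M,\widetilde F,d\mathfrak m)$ is again complete with $\widetilde{\mathbf K}\le k$ and $|\widetilde{\mathbf S}|\le(n-1)h$, $\widetilde\rho_+=\rho_-$, and $\widetilde F(\widetilde\nabla(-u))=\widetilde F(-\nabla u)=F(\nabla u)$, so the $\widetilde F$-building block reads exactly as the desired inequality restricted to $\{u>0\}$. Adding the three contributions yields the theorem; in particular nothing in the argument requires $M$ noncompact or simply connected, and the reversible case is the special case $\widetilde F=F$.

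To prove the building block I would set $r:=\rho_+$, so that $F(\nabla r)=1$ a.e.\ by (2.2), and use the vector field $W:=r^{-\beta-1}\nabla r$. Since $d\mathfrak m\sim r^{n-1}\,dr$ radially near $p$, the hypothesis $n-2>\beta$ is precisely what makes $v^2/r^{\beta+2}$, hence $v^2\,\di(W)$, integrable near $p$; excising a ball $B^+_p(\epsilon)$ (whose boundary flux is $O(\epsilon^{\,n-2-\beta})\to0$) and using that $v=0$ on $\partial\Omega\cap M$, I get $\int_\Omega\di(v^2W)\,d\mathfrak m=0$, which by the product rule for the divergence becomes
\[
\int_\Omega v^{2}\,\di(W)\,d\mathfrak m=-2\int_\Omega v\,r^{-\beta-1}\,dv(\nabla r)\,d\mathfrak m .
\]
On the left, $\di(W)=r^{-\beta-1}\Delta r-(\beta+1)r^{-\beta-2}$, and the comparison estimates (2.3)--(2.4) under $\mathbf K\le k$, $|\mathbf S|\le(n-1)h$ give $r\,\Delta r\ge(n-1)\bigl(1+D_{k,h}(r)\bigr)$, so $\di(W)\ge r^{-\beta-2}\bigl[(n-2-\beta)+(n-1)D_{k,h}(r)\bigr]$; since $v^2\ge0$ this bounds the left side below by $\int_\Omega v^2r^{-\beta-2}\bigl[(n-2-\beta)+(n-1)D_{k,h}(r)\bigr]\,d\mathfrak m$. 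On the right, $v\le0$ gives $-2v\,r^{-\beta-1}dv(\nabla r)=2|v|r^{-\beta-1}dv(\nabla r)\le 2|v|r^{-\beta-1}F^{*}(dv)F(\nabla r)=2|v|r^{-\beta-1}F(\nabla v)$, the only inequality used being $dv(X)\le F^{*}(dv)F(X)$ — which is why no uniformity constant appears — and then Young's inequality $2ab\le\varepsilon a^{2}+\varepsilon^{-1}b^{2}$ with $a=F(\nabla v)r^{-\beta/2}$, $b=|v|r^{-\beta/2-1}$ gives
\[
\int_\Omega\frac{v^{2}}{r^{\beta+2}}\Bigl[(n-2-\beta)-\tfrac1\varepsilon+(n-1)D_{k,h}(r)\Bigr]\,d\mathfrak m\ \le\ \varepsilon\int_\Omega\frac{F^{2}(\nabla v)}{r^{\beta}}\,d\mathfrak m .
\]
Dividing by $\varepsilon$ and choosing $\varepsilon=2/(n-2-\beta)$ — the value maximizing $(n-2-\beta)/\varepsilon-1/\varepsilon^{2}=(n-2-\beta)^2/4$ and producing the coefficient $(n-1)(n-2-\beta)/2$ on the $D_{k,h}$-term — gives the building block.

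The main obstacle is the one flagged in the introduction: $\rho_-$ is not a distance function of $F$, so neither $\nabla\rho_-$ nor $\Delta\rho_-$ is controlled by the $F$-geometry and the Riemannian argument cannot be copied directly; reinterpreting $\rho_-$ as the forward distance $\widetilde\rho_+$ of $\widetilde F$ — for which $\widetilde F(\widetilde\nabla\rho_-)=1$ and (2.3)--(2.4) are available — is what makes the estimate go through, and the identity $\widetilde F(\widetilde\nabla(-u))=F(\nabla u)$ is what keeps the constant free of $\Lambda_F$. Two secondary technical points: the singularity of $W$ at $p$ forces $n-2>\beta$ (the integrability threshold for $\int v^2/r^{\beta+2}$ and for the excised flux); and $r=\rho_+$ is smooth only off the null set $\mathrm{Cut}_p\cup\{p\}$, so the divergence identity must be applied on $\Omega\setminus\mathrm{Cut}_p$ and one checks, via the a.e.\ pointwise comparison (2.3), that the cut-locus part does not affect the estimate — this is automatic on Finsler--Hadamard manifolds, where $\mathrm{Cut}_p=\varnothing$.

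Finally, sharpness for $k\le0$, $h=0$. I would fix a cutoff $\chi$ equal to $1$ near $0$ and supported in $[0,R_0)$, put $f_\delta(t):=\chi(t)\,t^{-(n-2-\beta)/2+\delta}$ (decreasing in $t$ for small $\delta$), and take $u_\delta:=f_\delta(\rho_-)>0$, supported in $B^+_p(R_0)$. Then $\rho_{u_\delta}=\rho_-$ and, by (2.2), $F(\nabla u_\delta)=-f_\delta'(\rho_-)$. Expressing $\int_M F^2(\nabla u_\delta)\,\rho_{u_\delta}^{-\beta}\,d\mathfrak m$ and $\int_M u_\delta^2\,\rho_{u_\delta}^{-\beta-2}\,d\mathfrak m$ in the polar coordinates of $\widetilde F$ at $p$ and using (2.4) (the density behaving like $t^{\,n-1}$ as $t\to0$), both integrals behave like a common quantity that diverges as $\delta\to0^+$, the first carrying an extra factor $\bigl(\tfrac{n-2-\beta}{2}-\delta\bigr)^2$, so their ratio tends to $(n-2-\beta)^2/4$. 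Since the weight $u_\delta^2\rho_{u_\delta}^{-\beta-2}$ concentrates at $p$, where $D_{k,0}(\rho_{u_\delta})\to0$, the $D_{k,0}$-integral is $o\bigl(\int_M u_\delta^2\rho_{u_\delta}^{-\beta-2}\,d\mathfrak m\bigr)$; hence no constant larger than $(n-2-\beta)^2/4$ can stand in the inequality.
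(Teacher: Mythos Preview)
Your proposal is correct and arrives at the same inequality, but the core mechanism differs from the paper's. The paper works directly on $\Omega_1=\{u>0\}$ and $\Omega_2=\{u<0\}$ via the ground-state substitution $v=\rho_u^\gamma u$ (with $\gamma=(n-2-\beta)/2$) and the convexity inequality $F^{*2}(\eta)\ge F^{*2}(\xi)+2g^*_\xi(\xi,\eta-\xi)$ applied with $\xi$ a positive multiple of $-d\rho_-$ (resp.\ $d\rho_+$); the resulting cross term $R_0$ is then evaluated \emph{exactly} by the distributional integration-by-parts (\ref{2.7''}) together with the Laplacian comparison for $\widetilde\Delta\widetilde\rho_+$. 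You instead isolate a single ``forward'' building block (for $v\le0$ and $r=\rho_+$) via the vector field $W=r^{-\beta-1}\nabla r$, bound the cross term by the duality inequality $dv(\nabla r)\le F^*(dv)$ and Young, and then reuse the block on $\{u>0\}$ for the reverse metric through $\widetilde F(\widetilde\nabla(-u))=F(\nabla u)$. Both routes rely on Lemma~\ref{keylemma} and (\ref{1.2}) in the same way; yours is slightly more modular and more elementary (it uses only the definition of $F^*$, not the second-order expansion of $F^{*2}$). The paper's route, however, has a structural payoff later: replacing (\ref{3.1111}) by the sharpened Theorem~\ref{ineq} immediately yields the Brezis--V\'azquez remainder in Theorem~\ref{Hardy2}, whereas your Young step saturates and would not directly produce that extra $\Lambda_F^{-1}$ term. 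For sharpness, the paper truncates at height $\epsilon^{-\gamma}$ while you perturb the exponent by $\delta$; both are standard test families analysed in the polar coordinates of $(M,\widetilde F)$. Two small slips in your sketch: $u_\delta$ is supported in $B^-_p(R_0)$, not $B^+_p(R_0)$; and $u_\delta\notin C^\infty_0(M)$ since it blows up at $p$, so a density/approximation step is implicit (the paper's test function is likewise only Lipschitz).
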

\begin{proof}
\noindent \textbf{Step 1.}
Now set $\Omega_1:=\{x\in M:\, u(x)>0\}$, $\Omega_2:=\{x\in M:\, u(x)<0\}$, $\Omega_3:=\{x\in M:\, u(x)=0\}$. Note that $\Omega_i$, $1\leq i\leq 3$ are measurable.
Now we claim that for each $i$,
\begin{align*}
\int_{\Omega_i}  \frac{F^{2}(\nabla u(x))}{\rho^\beta_u(x)}d\mathfrak{m}(x)&\geq\frac{(n-2-\beta)^2}{4} \int_{\Omega_i} \frac{u^2(x)}{{\rho^{2+\beta}_u}(x)}d\mathfrak{m}(x)\\
&+\frac{(n-1)(n-2-\beta)}{2}\int_{\Omega_i}  \frac{u^2(x)}{{\rho^{2+\beta}_u}(x)}\cdot D_{k,h}(\rho_u(x))\,d\mathfrak{m}(x).\tag{3.1}\label{3.00}
\end{align*}

It is not hard to see that $(\ref{3.00})$ is true when $i=3$. We now show the case of $i=1$. The Cauchy inequality \cite[(1.2.3)]{BCS} implies (also see Theorem \ref{ineq})
\begin{align*}
F^{*2}(\eta)\geq F^{*2}(\xi)+2g^*_\xi(\xi,\eta-\xi),\,\forall \,\xi\neq0,\, \eta\in T^*M\tag{3.2}\label{3.1111}
\end{align*}
Set $v(x):={\rho_-}^\gamma(x)\cdot u(x)$, where $\gamma=(n-2-\beta)/2$. Thus, one gets
\[
du=v\cdot\gamma {\rho_-}^{-\gamma-1}\cdot (-d{\rho_-}) +{\rho_-}^{-\gamma}\cdot dv.
\]
Set $\eta:=du$ and $\xi:=v\cdot\gamma {\rho_-}^{-\gamma-1}\cdot (-d{\rho_-})$. Since $u,v$ are positive on $\Omega_1$, (\ref{3.1111}) together with  (\ref{1.1}) furnishes that
\begin{align*}
F^2(\nabla u)=F^{*2}(du)\geq\gamma^2\cdot v^2\cdot{\rho_-}^{-2\gamma-2}+2\gamma\cdot{\rho_-}^{-2\gamma-1}(x)\cdot v\cdot\langle \nabla ({-\rho_-}), dv\rangle
\end{align*}
holds almost everywhere on $\Omega_1$, which yields
\begin{align*}
\int_{\Omega_1}  \frac{F^{2}(\nabla u(x))}{\rho^\beta_-(x)}d\mathfrak{m}(x)\geq \gamma^2\int_{\Omega_1}  {\rho_-}^{-2\gamma-2-\beta}(x)\cdot v^2(x) d\mathfrak{m}(x)+R_0,
\end{align*}
where
\begin{align*}
R_0:=\gamma \int_{\Omega_1}  {\rho_-}^{-2\gamma-1-\beta}(x)\cdot \langle \nabla ({-\rho_-}), dv^2\rangle\, d\mathfrak{m}(x).\tag{3.3}\label{(2.1)}
\end{align*}

As before, let $\widetilde{F}$ denote the reverse of $F$ and set $\widetilde{\rho}_+(x)=d_{\widetilde{F}}(p,x)$.
Note that the  Legendre transformation  is positively homogeneous, i.e., for any $\lambda>0$, $\mathcal {L}^{-1}(\lambda \eta)=\lambda \mathcal {L}^{-1}( \eta)$, $\forall\,\eta\in T^*M$. Thus, for any $N>0$, we have
\begin{align*}
\nabla\rho^{-N}_-(x)&=\mathcal {L}^{-1}\left[ d(\rho^{-N}_-) \right]=\mathcal {L}^{-1}\left[ N \rho^{-N-1}_-d(-\rho_-) \right]\\
&=N \rho^{-N-1}_-\mathcal {L}^{-1}[d(-\rho_-)]=N \rho^{-N-1}_-\nabla(-\rho_-),\\
\Delta\rho^{-N}_-(x)&=\di(\nabla{\rho}^{-N}_-(x))=-\di\left( N \widetilde{\rho}^{-N-1}_+ \widetilde{\nabla}\widetilde{\rho}_+ \right)\\
&=-N\,\widetilde{\rho}^{-N-2}_+(x)\left[(-N-1)+ \widetilde{\rho}_+ \widetilde{\Delta} \widetilde{\rho}_+\right].\tag{3.4}\label{2.4}
\end{align*}
Then Lemma \ref{keylemma} together with (\ref{(2.1)}), (\ref{2.4})  and (\ref{2.7''}) furnishes
\begin{align*}
R_0&=\frac{\gamma}{2\gamma+\beta} \int_{\Omega_1}   \langle {\nabla} {{\rho}^{-2\gamma-\beta}_-}, dv^2\rangle\, d\mathfrak{m}(x)\\
&=\gamma\int_{\Omega_1}  v^2\cdot {\widetilde{\rho}_+}^{-2\gamma-2-\beta}(x)\left[-(n-1)+{\widetilde{\rho}_+}(x)\cdot\widetilde{\Delta} {\widetilde{\rho}_+}(x)\right]   d\mathfrak{m}(x).\tag{3.5}\label{2.3}
\end{align*}

It follows from Lemma \ref{keylemma} that  $\widetilde{\mathbf{K}}\leq k$  and $|\widetilde{\mathbf{S}}|\leq (n-1)h$. Hence, (\ref{1.2}) implies
\[
\widetilde{\Delta}\widetilde{\rho}_+(x)\geq (n-1)\left[ \frac{\mathfrak{s}'_k(\widetilde{\rho}_+(x))}{\mathfrak{s}_k(\widetilde{\rho}_+(x))}-h  \right] \text{ a.e. on }M,\tag{3.6}\label{2.5}
\]
which together with (\ref{2.3}) and (\ref{2.4}) furnishes
\[
R_0\geq(n-1)\gamma\int_{\Omega_1}  \frac{v^2(x)}{\widetilde{\rho}_+^{2\gamma+2+\beta}(x)}\cdot D_{k,h}\left(\widetilde{\rho}_+(x)\right)d\mathfrak{m}(x).
\]
Thus, (\ref{3.00}) is true for $i=1$.

In the case of $i=2$,  set $v=u\cdot \rho_+^\gamma$. Thus, we have
\[
du=(-v)\cdot \gamma\cdot \rho^{-\gamma-1}_+d\rho_++\rho^{-\gamma}_+dv,
\]
which together with (\ref{3.1111}) and Lemma \ref{keylemma} yields
\begin{align*}
F^2(du)\geq \gamma^2\cdot v^2\cdot\rho^{-2\gamma-2}_++2\gamma \widetilde{\rho}^{-2\gamma-1}_-\cdot v\cdot\langle \widetilde{\nabla}(-\widetilde{\rho}_-),dv\rangle.
\end{align*}
The rest of proof is similar as before, so we omit it. Thus, the desired inequality follows from (\ref{3.00}).

\noindent \textbf{Step 2.}  By the properties of $D_{k,h}$, it is easy to see that
\[
\frac{(n-1)(n-2-\beta)}{2}\int_M  \frac{u^2(x)}{{\rho^{2+\beta}_u}(x)}\cdot D_{k,h}(\rho_u(x))\,d\mathfrak{m}(x)\geq 0 \tag{3.7}\label{3.7777}
\]
if  $k\leq 0$ and $h= 0$.
In the following, we prove that the constant $\gamma^2=\left(\frac{n-2-\beta}{2} \right)^2$ is sharp in this case. Clearly, it suffices to show that
\[
\inf_{u\in C^\infty_0(M)\backslash\{0\}}\frac{\int_M  \frac{F^{2}(\nabla u)}{\rho^\beta_u(x)}d\mathfrak{m}}{\int_M \frac{u^2(x)}{{\rho^{2+\beta}_u}(x)}d\mathfrak{m}}=\gamma^2.
\]

Given $0<\epsilon<r<R<\widetilde{\mathfrak{i}}_p$, choose a cut-off function $\psi\in C^\infty_0(M)$ with $\text{supp}(\psi)=B^-_{p}(R)$ and $\psi|_{B^-_p(r)}\equiv1$. Here, $\widetilde{\mathfrak{i}}_p$ is the cut value of $p$ in $(M,\widetilde{F})$. Set $u_\epsilon(x):=\left[\max \{\epsilon, \rho_-(x)\}\right]^{-\gamma}$. Since $u:=\psi\cdot u_\epsilon\geq 0$ , we have
\begin{align*}
I_1(\epsilon):=&\int_M \frac{F^2(\nabla u(x))}{\rho^\beta_u(x)}d\mathfrak{m}(x)\\
=&\int_{B^-_p(r)}\frac{F^{*2}(d u_\epsilon(x))}{\rho^\beta_-(x)}d\mathfrak{m}(x)+\int_{B^-_p(R)\backslash B^-_p(r)}\frac{F^2(\nabla (\psi \rho^{-\gamma}_-)(x))}{\rho^\beta_-(x)}d\mathfrak{m}(x)\\
=&\int_{B^-_p(r)\backslash B^-_p(\epsilon)}\frac{F^{*2}(\gamma \rho^{-\gamma-1} d (-\rho_-))}{\rho^\beta_-(x)}d\mathfrak{m}(x)+\int_{B^-_p(R)\backslash B^-_p(r)}\frac{F^2(\nabla (\psi \rho^{-\gamma}_-))}{\rho^\beta_-(x)}d\mathfrak{m}(x)\\
=&:\gamma^2 \mathfrak{I}_1+\mathfrak{I}_2,\tag{3.8}\label{2.6}
\end{align*}
where
\begin{align*}
\mathfrak{I}_1:=\int_{B^-_p(r)\backslash B^-_p(\epsilon)}{\rho^{-n}_-}(x)d\mathfrak{m}(x),\ \mathfrak{I}_2:=\int_{B^-_p(R)\backslash B^-_p(r)}\frac{F^2(\nabla (\psi \rho^{-\gamma}_-))}{\rho^\beta_-(x)}d\mathfrak{m}(x).
\end{align*}
Clearly, $\mathfrak{I}_2$ is independent of $\epsilon$ and finite.
On the other hand, we have
\begin{align*}
I_2(\epsilon):=&\int_M \frac{u^2(x)}{{\rho^{2+\beta}_-}(x)}d\mathfrak{m} (x)\geq \int_{B^-_p(r)\backslash B^-_p(\epsilon)} \frac{(\psi u_\epsilon)^2(x)}{{\rho^{2+\beta}_-}(x)}d\mathfrak{m}(x)\\
=&\int_{B^-_p(r)\backslash B^-_p(\epsilon)} \frac{\rho^{-2\gamma}_-(x)}{{\rho^{2+\beta}_-}(x)}d\mathfrak{m}(x)=\mathfrak{I}_1.\tag{3.9}\label{2.7}
\end{align*}

We now estimate $\mathfrak{I}_1$. Lemma \ref{keylemma} points out $\rho_-(x)=\widetilde{\rho}_+(x)$ and hence, $B^-_p(l)=\widetilde{B}^+_p(l)$, where $\widetilde{B}^+_p(l)$ is the forward geodesic ball with radius $l$ centered at $p$ in $(M,\widetilde{F})$. The co-area formula (\ref{1.3}) then yields
\begin{align*}
\mathfrak{I}_1&=\int_{B^-_p(r)\backslash B^-_p(\epsilon)}{\rho^{-n}_-}(x)d\mathfrak{m}(x)=\int_{\widetilde{B}^+_p(r)\backslash \widetilde{B}^+_p(\epsilon)}\widetilde{\rho}^{-n}_+(x)\,d\mathfrak{m}(x)\\
&=\int^r_\epsilon dt \int_{\widetilde{S}^+_p(t)}t^{-n} d\widetilde{A}=\int^r_\epsilon t^{-n}\cdot \widetilde{A}(\widetilde{S}^+_p(t))\, dt,\tag{3.10}\label{2.8}
\end{align*}
where $\widetilde{S}^+_p(t)$ is the geodesic sphere $\{x\in M: \,\widetilde{\rho}_+(x)=t\}$ and $d\widetilde{A}:=\widetilde{\nabla} \widetilde{\rho}_+\rfloor d\mathfrak{m}$.

Now let $(t,y)$ be the polar coordinates at $p$ in $(M,\widetilde{F})$ and write $d\mathfrak{m}=\hat{\sigma}_p(t,y)dt\wedge d\nu_p(y)$, where $d\nu_p$ is the Riemannian measure of the indicatrix $\widetilde{S_pM}:=\{y\in T_pM:\,\widetilde{F}(p,y)=1\}$. Using (\ref{1.2}) again, we obtain
\[
\hat{\sigma}_p(t,y)\geq  e^{-\widetilde{\tau}(y)} \mathfrak{s}^{n-1}_k(t), \text{ for }0<t<\tilde{i}_y,
\]
where $\widetilde{\tau}$ is the distortion of $d\mathfrak{m}$ and $\tilde{i}_y$ is the cut value of $y$ in $(M,\widetilde{F})$.
Recall that $d\widetilde{A}=\widetilde{\nabla} \widetilde{\rho}_+\rfloor d\mathfrak{m}=\hat{\sigma}_p(t,y)d\nu_p(y)$. Thus, from above, we obtain that
\begin{align*}
\widetilde{A}(\widetilde{S}^+_p(t))&=\int_{\widetilde{S}^+_p(t)} d\widetilde{A}=\int_{\widetilde{S_pM}}\hat{\sigma}_p(t,y)d\nu_p(y)\geq \int_{\widetilde{S_pM}}e^{-\widetilde{\tau}(y)} \mathfrak{s}^{n-1}_k(t)d\nu_p(y)\\
&=C_p(d\mathfrak{m})\cdot\mathfrak{s}^{n-1}_k(t),\tag{3.11}\label{2.9}
\end{align*}
where
\[
C_p(d\mathfrak{m}):=\int_{\widetilde{S_pM}}e^{-\widetilde{\tau}(y)}d\nu_p(y).
\]
Clearly, $C_p(d\mathfrak{m})$ is a finite positive number since $\widetilde{S_pM}$ is compact.
In particular, Remark \ref{firstremark} implies $C_p(d\mathfrak{m})=n\omega_n$ (i.e., the area of $\mathbb{S}^{n-1}$) if $d\mathfrak{m}$ is the Busemann-Hausdorff measure (induced by $F$). Now (\ref{2.8}) combining with (\ref{2.9}) yields that
\begin{align*}
\mathfrak{I}_1\geq C_p(d\mathfrak{m})\int^r_\epsilon \frac1t\left[\frac{\mathfrak{s}_k(t)}{ t} \right]^{n-1}dt.\tag{3.12}\label{2.10}
\end{align*}
Obviously,
\[
\lim_{t\rightarrow 0^+}\left[\frac{\mathfrak{s}_k(t)}{ t} \right]^{n-1}=1,
\]
and hence, there exits a finite positive number $C(n,k,r)$ such that
\[
\left[\frac{\mathfrak{s}_k(t)}{ t} \right]^{n-1}\geq C(n,k,r), \text{ for $0<t\leq r$},
\]
which together with (\ref{2.10}) furnishes that
\[
\mathfrak{I}_1\geq C_p(d\mathfrak{m})\cdot C(n,k,r)\cdot\left[\ln r-\ln \epsilon \right]\rightarrow +\infty,\text{ as }\epsilon\rightarrow 0^+.\tag{3.13}\label{2.11}
\]
By (\ref{3.7777}), (\ref{2.6}), (\ref{2.7}), (\ref{2.10}) and (\ref{2.11}), we have
\begin{align*}
\gamma^2\leq \inf_{u\in C^\infty_0(M)\backslash\{0\}}\frac{\int_M  \frac{F^{2}(\nabla u)}{\rho^\beta_-(x)}d\mathfrak{m}}{\int_M \frac{u^2(x)}{{\rho^{2+\beta}_-}(x)}d\mathfrak{m}} \leq \lim_{\epsilon\rightarrow 0^+}\frac{I_1(\epsilon)}{I_2(\epsilon)}
=\lim_{\epsilon\rightarrow 0^+}\frac{\gamma^2\cdot\mathfrak{I}_1+\mathfrak{I}_2}{\mathfrak{I}_1}\leq \gamma^2.
\end{align*}
\end{proof}

\begin{remark}\label{reversibles-curvature}
One may notice that if $F$ is reversible, the Hardy inequality in Theorem \ref{Hardy1} is  sharp under the weaker assumption that $\mathbf{K}\leq k\leq 0$ and $\mathbf{S}\leq (n-1)h\leq 0$. However, due to Lemma \ref{keylemma},
if the  S-curvature of  a reversible Finsler manifold is non-positive (or nonnegative), then the S-curvature vanishes  factually.
\end{remark}

\begin{remark}
Given $0<\epsilon<r<R<\mathfrak{i}_p$, choose a cut-off function $\psi\in C^\infty_0(M)$ with $\text{supp}(\psi)=B^+_{p}(R)$ and $\psi|_{B^+_p(r)}\equiv1$. Here, ${\mathfrak{i}}_p$ is the cut value of $p$ in $(M,{F})$. Set $v_\epsilon(x):=-\left[\max \{\epsilon, \rho_+(x)\}\right]^{-\gamma}$ and $v:=\psi\cdot v_\epsilon$. One can use this function $v$ to show that the constant ${(n-2-\beta)^2}/{4}$ is sharp in Step 2 above.
\end{remark}

A Finsler-Hardmard $(M,F)$ is a simply connected forward complete Finsler manifold with $\mathbf{K}\leq 0$. The Cartan-Hadamard theorem in the Finsler setting (cf. \cite[Theorem 9.4.1]{BCS}) implies that $\mathfrak{i}_p=\infty$ for any $p\in M$ and hence, $M$ is noncompact. Thus, Theorem \ref{Hardy1}
furnishes \cite[Lemma 3.1]{KR}. In particular, their result is sharp.

Using Theorem \ref{Hardy1}, we get the following uncertainty principle type
inequality on a Finsler manifold.
\begin{corollary}
Let $(M,F,d\mathfrak{m})$ be  an $n$-dimensional  complete Finsler manifold or an open domain containing $p$ with $\mathbf{K}\leq 0$ and $\mathbf{S}=  0$. For any $\beta\in \mathbb{R}$ with $n-2>\beta$ and any $u\in C^\infty_0(M)$, we have
\begin{align*}
\left( \int_M \rho^{2+\beta}_u(x)\cdot u^2(x) d\mathfrak{m}(x) \right)^\frac12 \left(\int_M  \frac{F^{2}(\nabla u(x))}{\rho^{\beta}_u(x)}d\mathfrak{m}(x) \right)^\frac12\geq \left(\frac{n-2-\beta}{2}\right) \int_M  u^2(x) d\mathfrak{m}(x).
\end{align*}
\end{corollary}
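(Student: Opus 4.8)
The plan is to obtain this inequality as a direct consequence of Theorem~\ref{Hardy1} combined with the Cauchy--Schwarz inequality for the measure $d\mathfrak{m}$. First I would record that under the present hypotheses $\mathbf{K}\le 0$ and $\mathbf{S}=0$ we may apply Theorem~\ref{Hardy1} with $k=0$ and $h=0$. In that case $\mathfrak{s}_0(t)=t$, so
\[
D_{0,0}(t)=t\left(\frac{\mathfrak{s}_0'(t)}{\mathfrak{s}_0(t)}-0\right)-1=t\cdot\frac1t-1=0,
\]
and the remainder term in Theorem~\ref{Hardy1} disappears identically. Hence for every $u\in C^\infty_0(M)$,
\[
\int_M\frac{F^2(\nabla u(x))}{\rho_u^\beta(x)}\,d\mathfrak{m}(x)\ \ge\ \frac{(n-2-\beta)^2}{4}\int_M\frac{u^2(x)}{\rho_u^{2+\beta}(x)}\,d\mathfrak{m}(x).
\]

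Next I would split $u^2$ as
\[
u^2(x)=\frac{|u(x)|}{\rho_u^{(2+\beta)/2}(x)}\cdot\left(|u(x)|\,\rho_u^{(2+\beta)/2}(x)\right)
\]
and apply Cauchy--Schwarz to the two factors, which gives
\[
\int_M u^2(x)\,d\mathfrak{m}(x)\ \le\ \left(\int_M\frac{u^2(x)}{\rho_u^{2+\beta}(x)}\,d\mathfrak{m}(x)\right)^{1/2}\left(\int_M \rho_u^{2+\beta}(x)\,u^2(x)\,d\mathfrak{m}(x)\right)^{1/2}.
\]
Since $n-2>\beta$, the constant $n-2-\beta$ is strictly positive, so the Hardy inequality above may be rewritten as
\[
\left(\int_M\frac{u^2(x)}{\rho_u^{2+\beta}(x)}\,d\mathfrak{m}(x)\right)^{1/2}\ \le\ \frac{2}{\,n-2-\beta\,}\left(\int_M\frac{F^2(\nabla u(x))}{\rho_u^{\beta}(x)}\,d\mathfrak{m}(x)\right)^{1/2}.
\]
Substituting this bound into the first factor of the Cauchy--Schwarz estimate and rearranging yields exactly the asserted inequality.

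I do not expect any genuine obstacle here; the statement is a soft corollary. The only points that require attention are: verifying that $D_{0,0}\equiv 0$ so that the curvature/S-curvature correction term in Theorem~\ref{Hardy1} vanishes; choosing the exponents in the Cauchy--Schwarz split so that the two emerging factors are precisely $\int_M \rho_u^{2+\beta}u^2\,d\mathfrak{m}$ and $\int_M u^2\rho_u^{-(2+\beta)}\,d\mathfrak{m}$, the latter being controlled by $\int_M F^2(\nabla u)\rho_u^{-\beta}\,d\mathfrak{m}$ via Theorem~\ref{Hardy1}; and noting that $\rho_u$ is positive and finite $\mathfrak{m}$-almost everywhere on the compact support of $u$, so every integral appearing is well defined and finite (the case $u\equiv 0$ being trivial).
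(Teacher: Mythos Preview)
Your proposal is correct and follows essentially the same route as the paper: apply Theorem~\ref{Hardy1} (with $k=0$, $h=0$ so the $D_{k,h}$ term vanishes) and combine it with the Cauchy--Schwarz/H\"older inequality for $\int_M u^2\,d\mathfrak{m}$. The paper presents the two steps in the opposite order---first bounding the left-hand side below by Hardy, then applying H\"older---but the argument is the same.
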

\begin{proof}From Theorem \ref{Hardy1}, we have
\begin{align*}
\int_M  \frac{F^{2}(\nabla u(x))}{\rho^\beta_u(x)}d\mathfrak{m}(x)&\geq\left(\frac{n-2-\beta}{2} \right)^2\int_M \frac{u^2(x)}{{\rho^{2+\beta}_u}(x)}d\mathfrak{m}(x),
\end{align*}
which together with the H\"older inequality yields that
\begin{align*}
&\left( \int_M \rho^{2+\beta}_u(x)\cdot u^2(x) d\mathfrak{m} (x)\right)^\frac12 \left(\int_M  \frac{F^{2}(\nabla u)}{\rho^\beta_u(x)}d\mathfrak{m} (x)\right)^\frac12\\
\geq& \left(\frac{n-2-\beta}{2}\right)\left( \int_M \rho^{2+\beta}_u(x)\cdot u^2(x) d\mathfrak{m}(x) \right)^\frac12 \left(\int_M \frac{u^2(x)}{{\rho^{2+\beta}_u}(x)}d\mathfrak{m}(x)\right)^\frac12\\
=&\left(\frac{n-2-\beta}{2}\right) \int_M  u^2(x) d\mathfrak{m}(x).
\end{align*}
\end{proof}

\begin{remark}\label{3remark}
By combing the methods used in Theorem \ref{Hardy1} and \cite{K}, one can establish a sharp Heisenberg-Pauli-Weyl inequality on general Finsler manifolds. Refer to \cite{FS,K,KO} for more details about uncertainty principles.
\end{remark}

In the following, we will show that the Hardy inequality on Finsler manifolds can be refined by adding remainder terms like the Brezis-V\'azquez improvement if the manifold is of strictly negative flag curvature. Refer to \cite{BV,YSK} for the Euclidean and Riemannian case. Before doing this, we need the following inequality.

\begin{theorem}\label{ineq}Let $(M,F)$ be a Finsler manifold with finite uniformity constant $\Lambda_F<\infty$. Then
for any $\xi,\eta\in T^*_xM$,
\[
F^{*2}(\xi+\eta)\geq F^{*2}(\xi)+2g^*_\xi(\xi,\eta)+\frac{1}{\Lambda_F}F^{*2}(\eta).\tag{3.14}\label{3.11new}
\]
Here, we set $g^*_\xi(\xi,\eta)=0$ if $\xi=0$.
\end{theorem}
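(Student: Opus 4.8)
The plan is to reduce \eqref{3.11new} to a one–dimensional second–order Taylor expansion of $F^{*2}$ along the segment joining $\xi$ to $\xi+\eta$, and to control the remainder by the uniformity constant of the \emph{dual} metric. If $\eta=0$ both sides equal $F^{*2}(\xi)$, so I would assume $\eta\neq0$; the case $\xi=0$ needs no separate treatment once the stated convention $g^*_0(0,\eta)=0$ is adopted. I would set $\phi(t):=F^{*2}(\xi+t\eta)$ for $t\in[0,1]$. Since $F^{*2}\in C^1(T^*_xM)\cap C^\infty(T^*_xM\setminus\{0\})$ (the differential of $F^{*2}$ vanishes at the origin), $\phi$ is $C^1$ on $[0,1]$ with $\phi'$ Lipschitz, and smooth off the at most one value $t_0$ with $\xi+t_0\eta=0$. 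Using Euler's relation and the total symmetry of the Cartan tensor of $F^*$, one checks $\phi'(0)=2g^*_\xi(\xi,\eta)$, while for a.e.\ $t$,
\[
\phi''(t)=2g^*_{\xi+t\eta}(\eta,\eta),
\]
where $g^*_W$ denotes the fundamental tensor of $F^*$ at $W\neq0$. The exceptional value $t_0$ is harmless: $0$–homogeneity of the components of $g^*$ keeps $\phi''$ bounded near $t_0$, and $\phi'$ is absolutely continuous, so $t_0$ does not affect the integral remainder.

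The crucial ingredient is the lower bound $\phi''(t)\geq\frac{2}{\Lambda_F}F^{*2}(\eta)$, for which I would first establish the identity $\Lambda_{F^*}=\Lambda_F$. This follows from the Legendre transformation: since $g^*_{\mathfrak{L}(V)}=(g_V)^{-1}$ as bilinear forms and $\mathfrak{L}$ is a bijection off the zero sections, the defining suprema match because of the linear–algebra identity
\[
\sup_{Y\neq0}\frac{Y^{T}B\,Y}{Y^{T}A\,Y}=\lambda_{\max}\!\left(A^{-1}B\right)=\sup_{\omega\neq0}\frac{\omega^{T}A^{-1}\omega}{\omega^{T}B^{-1}\omega},
\]
applied with $A=g_V$, $B=g_{V'}$ ranging over fundamental tensors of $F$. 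Granting $\Lambda_{F^*}=\Lambda_F$, the definition of $\Lambda_{F^*}$ gives, for every $W\neq0$, $g^*_W(\eta,\eta)\geq\Lambda_{F^*}^{-1}\,g^*_\eta(\eta,\eta)=\Lambda_F^{-1}F^{*2}(\eta)$, the last equality again by Euler's relation; taking $W=\xi+t\eta$ yields the asserted bound on $\phi''$ for a.e.\ $t\in[0,1]$.

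To finish, I would invoke the second–order Taylor formula with integral remainder,
\[
\phi(1)=\phi(0)+\phi'(0)+\int_0^1(1-t)\,\phi''(t)\,dt\geq F^{*2}(\xi)+2g^*_\xi(\xi,\eta)+\frac{1}{\Lambda_F}F^{*2}(\eta),
\]
using $\int_0^1(1-t)\,dt=\frac12$; this is precisely \eqref{3.11new}. (Equivalently, one can note that $\psi(t):=\phi(t)-\phi(0)-\phi'(0)\,t-\frac{1}{\Lambda_F}F^{*2}(\eta)\,t^2$ satisfies $\psi(0)=\psi'(0)=0$ and has nondecreasing continuous derivative, hence $\psi(1)\geq0$.) I expect the main obstacle to be the identity $\Lambda_{F^*}=\Lambda_F$, together with the mild regularity bookkeeping near the zero section where $F^{*2}$ is only $C^1$; both are handled by the homogeneity and Legendre–transform observations above, and the rest is the standard Taylor estimate.
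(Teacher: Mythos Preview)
Your argument is correct and shares the same skeleton as the paper's proof: first establish $\Lambda_{F^*}=\Lambda_F$, then obtain the pointwise bound $\phi''(t)=2g^*_{\xi+t\eta}(\eta,\eta)\geq \tfrac{2}{\Lambda_F}F^{*2}(\eta)$ and integrate twice along the segment from $\xi$ to $\xi+\eta$. The genuine difference is how the possible zero $\xi+t_0\eta=0$ is handled. The paper treats this by a geometric detour: it replaces a neighbourhood of $t_0$ by a half-circle of radius $\epsilon$ in the average Riemannian metric $\hat g^*$ on $T^*_xM$, estimates $f'$ and $f''$ along this arc using the Cauchy--Schwarz inequality and the bound $\hat g^*\leq\Lambda_F F^{*2}$, shows that the arc contribution to $\int f'\,dt$ vanishes as $\epsilon\to0^+$, and only then passes to the limit. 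Your route bypasses this construction entirely by noting that $F^{*2}$ is $C^{1,1}$ on the fibre (its Hessian $2g^*$ is $0$-homogeneous, hence bounded on $T^*_xM\setminus\{0\}$), so $\phi'$ is globally Lipschitz and the Taylor formula with integral remainder applies directly; the single point $t_0$ has measure zero and is irrelevant. This is a clean simplification: one sentence of regularity replaces the paper's entire Case~(2). Your proof of $\Lambda_{F^*}=\Lambda_F$ via the generalized-eigenvalue identity $\lambda_{\max}(A^{-1}B)=\sup_\omega \frac{\omega^TA^{-1}\omega}{\omega^TB^{-1}\omega}$ is also slightly slicker than the paper's orthonormal-basis argument, though the two are equivalent in content.
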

\begin{proof}
\noindent\textbf{ Step 1.} First we show that
\[
\sup_{\xi,\eta,\zeta\in T^*M\backslash\{0\}}\frac{g^*_\xi(\zeta,\zeta)}{g^*_\eta(\zeta,\zeta)}=\Lambda_F.
\]

Fixing any point $x\in M$ and choosing any $\eta\in T^*_xM\backslash\{0\}$, set $y=\mathcal {L}^{-1}(\eta)$. Let $\{e_i\}$  be a $g_y$-orthonormal basis of $T_xM$ and let $\{\theta^i\}$ be its dual basis. Since $\Lambda_F<\infty$, we have
\[
\frac{1}{\Lambda_F} \|X\|^2\leq g_{ij}(Z)X^iX^j\leq {\Lambda_F} \|X\|^2,\ \forall \,Z(\neq0),X\in T_xM,
\]
where
\[
\|X\|^2=\sum_i (X^i)^2=g_{ij}(y)X^iX^j.
\]
Thus, each eigenvalue of $(g_{ij}(Z))$ (and hence, each eigenvalue of $(g^{ij}(Z))$) is in $[\Lambda_F^{-1},\Lambda_F]$, which together with \cite[Lemma 3.1.2, (3.7)]{Sh1} yields that for any $\zeta=\zeta_i\, \theta^i\in T^*_xM$,
\[
\frac{1}{\Lambda_F}g^*_\eta(\zeta,\zeta)=\frac{1}{\Lambda_F}\delta^{ij}\zeta_i\zeta_j\leq g^{*ij}(\mathcal {L}(Z))\zeta_i\zeta_j\leq {\Lambda_F}\cdot \delta^{ij}\zeta_i\zeta_j={\Lambda_F}\cdot g^*_\eta(\zeta,\zeta).
\]
Since $Z$ is arbitrary and the Legendre transformation $\mathcal {L}: T_xM\rightarrow T^*_xM$ is a homeomorphism, the above inequality implies
\[
\Lambda_{F^*}:=\sup_{\xi,\eta,\zeta\in T^*M\backslash\{0\}}\frac{g^*_\xi(\zeta,\zeta)}{g^*_\eta(\zeta,\zeta)}\leq \Lambda_F<\infty.
\]
Using the same argument again, one can get $\Lambda_F\leq \Lambda_{F^*}$ and therefore, $\Lambda_{F^*}=\Lambda_F$.
In particular, for any $\eta\neq0$,
\[
g^*_\eta(\zeta,\zeta)\geq \frac{ F^{*2}(\zeta)}{\Lambda_F}.
\]

\noindent\textbf{Step 2.} Now we prove (\ref{3.11new}).
Without losing generality, we assume that $\eta\neq0$. Set $\zeta:=\eta/F(\eta)$.

\noindent(1) Suppose that $\xi+t\zeta\neq0$, for $t\in (0,F(\eta))$. Then we define a $C^2$-function
\[
f(t):=F^{*2}(\xi+t\zeta)-F^{*2}(\xi)-2g^*_\xi(\xi,t\zeta), \ t\in (0,F(\eta)).
\]
Clearly, $f(0)=0$ and
\begin{align*}
&f'(t)=2g^*_{\xi+t\zeta}(\xi+t\zeta,\zeta)-2g^*_\xi(\xi,\zeta), &f'(0)=0,\\
&f''(t)=2g^*_{\xi+t\zeta}(\zeta,\zeta)\geq \frac{2}{\Lambda_F}.
\end{align*}
Hence,
\begin{align*}
f'(t)=\int^t_0 f''(t)dt\geq \frac{2}{\Lambda_F}t, \
f(F^*(\eta))=\int^{F^*(\eta)}_0 f'(t)dt\geq \frac{F^{*2}(\eta)}{\Lambda_F},\tag{3.15}\label{new3.15}
\end{align*}
which is exactly (\ref{3.11new}).

\noindent(2) Suppose that there exits $t_0\in (0,F(\eta))$ such that $\xi+t_0\zeta=0$. Consider the Euclidean space $(T^*_xM,\hat{g}^*_x)$, where $\hat{g}^*$ is the average Riemannian metric induced by $F^*$. Fixing a  sufficiently small positive number $\epsilon(<1)$, let $C(t):t_0-\epsilon/\|\zeta\|\leq t \leq t_0+\epsilon/\|\zeta\|$ be a half-circle of radius $\epsilon$ centered at $0$ in $(T^*_xM,\hat{g}^*_x)$. In particular,
\[
C\left(t_0-\frac{\epsilon}{\|\zeta\|}\right)=\xi+\left(t_0-\frac{\epsilon}{\|\zeta\|}\right)\zeta,\ C\left(t_0+\frac{\epsilon}{\|\zeta\|}\right)=\xi+\left(t_0+\frac{\epsilon}{\|\zeta\|}\right)\zeta,
\]
where $\|\cdot\|$ is the norm induced by $\hat{g}^*_x$.
Thus,
\[
\|C(t)\|=\epsilon,\ \|\dot{C}(t)\|=\frac{\pi}{2}\|\zeta\|,\ \|\ddot{C}(t)\|=\frac{\pi^2}{4\epsilon}\|\zeta\|^2.
\]
Now set
\begin{align*}
\gamma(t):=\left\{
\begin{array}{lll}
\xi+t\zeta, &t\in [0,t_0-\epsilon/\|\zeta\|)\cup (t_0+\epsilon/\|\zeta\|,F(\eta)],\\
\\
C(t),& t\in \left[t_0-\epsilon/\|\zeta\|,t_0+\epsilon/\|\zeta\|\right],
\end{array}
\right.
\end{align*}
and
\[
f(t):=F^{*2}(\gamma(t))-F^{*2}(\xi)-2g^*_\xi(\xi,\gamma(t)-\xi).
\]
Hence, $f(0)=0$
and
\begin{align*}
f'(t)&=2g^*_{\gamma(t)}(\gamma(t),\dot{\gamma}(t))-2g^*_\xi(\xi,\dot{\gamma}(t)),\\
f''(t)&=2g^*_{\gamma(t)}(\dot{\gamma}(t),\dot{\gamma}(t))+2g^*_{\gamma(t)}(\gamma(t),\ddot{\gamma}(t))-2g_\xi(\xi,\ddot{\gamma}(t)).
\end{align*}
Then the Cauchy inequality \cite[(1.2.16)]{BCS} together with (\ref{2.000}) and Step 1 implies
\begin{align*}
\left|\left.f'\right|_{[t_0-\frac{\epsilon}{\|\zeta\|},t_0+\frac{\epsilon}{\|\zeta\|}]}\right|&\leq 2 |g^*_{C(t)}(C(t),\dot{C}(t))|+2|g^*_\xi(\xi,\dot{C}(t))|\\
&\leq 2 \Lambda^\frac32_F\cdot \|\dot{C}(t)\|\left(\|{C}(t)\|+F^*(\xi) \right)\leq \mathcal {C}_1(\Lambda_F, F^*(\xi),F^*(\zeta)),\\
\left|\left.f''\right|_{[t_0-\frac{\epsilon}{\|\zeta\|},t_0+\frac{\epsilon}{\|\zeta\|}]}\right|&\leq |2g^*_{C(t)}(\dot{C}(t),\dot{C}(t))|+|2g^*_{C(t)}(C(t),\ddot{C}(t))|+|2g_\xi(\xi,\ddot{C}(t))|\\
&\leq  2\Lambda^2_F\cdot \|\dot{C}(t)\|^2+2\Lambda^\frac32_F\cdot \|{C}(t)\| \cdot \|\ddot{C}(t)\|+2\Lambda_F\cdot F^*(\xi) \cdot \|\ddot{C}(t)\|\\
&\leq \epsilon^{-1}\cdot\mathcal {C}_2(\Lambda_F,F^*(\xi),F^*(\zeta)),
\end{align*}
where $\mathcal {C}_1(\Lambda_F, F^*(\xi),F^*(\zeta))$, $\mathcal {C}_2(\Lambda_F, F^*(\xi),F^*(\zeta))$ are two constants independent of $\epsilon$.
Therefore, for $t\in (t_0-\epsilon/\|\zeta\|,t_0+\epsilon/\|\zeta\|)$, we have
\begin{align*}
\left|\left.f'\right|_{[t_0-\frac{\epsilon}{\|\zeta\|},t_0+\frac{\epsilon}{\|\zeta\|}]}(t)\right|
&\leq \left|f'\left(t_0-\frac{\epsilon}{\|\zeta\|}\right)\right|+\left|\int_{t_0-\frac{\epsilon}{\|\zeta\|}}^t f''(s)ds\right|\\
&\leq \mathcal {C}_1(\Lambda_F, F^*(\xi),F^*(\zeta))+\frac{2{\Lambda^\frac12_F}\cdot\mathcal {C}_2(\Lambda_F, F^*(\xi),F^*(\zeta))}{F^*(\zeta)},
\end{align*}
which implies
\[
\lim_{\epsilon\rightarrow 0^+}\left|\int^{t_0+\frac{\epsilon}{\|\zeta\|}}_{t_0-\frac{\epsilon}{\|\zeta\|}} f'(t)dt\right|=0.\tag{3.16}\label{new3.16}
\]
It now follows from (\ref{new3.15}) and (\ref{new3.16}) that
\begin{align*}
f(F^*(\eta))&=\int^{F^*(\eta)}_{t_0+\frac{\epsilon}{\|\zeta\|}}+\int^{t_0+\frac{\epsilon}{\|\zeta\|}}_{t_0-\frac{\epsilon}{\|\zeta\|}}+\int^{t_0-\frac{\epsilon}{\|\zeta\|}}_0 f'(t)dt\\
&\geq\frac{F^{*2}(\eta)}{\Lambda_F}+\int^{t_0+\frac{\epsilon}{\|\zeta\|}}_{t_0-\frac{\epsilon}{\|\zeta\|}}f'(t)dt+\frac{(t_0-\frac{\epsilon}{\|\zeta\|})^2-(t_0+\frac{\epsilon}{\|\zeta\|})^2}{\Lambda_F}\rightarrow \frac{F^{*2}(\eta)}{\Lambda_F},
\end{align*}
as $\epsilon\rightarrow 0^+.$
\end{proof}

\begin{example}\label{firstex}
Let $F=\alpha+\beta$ be a Randers norm on $\mathbb{R}^n$. Then $(\mathbb{R}^n, F)$ is a locally Minkowski space.  Now we check that
\[
F^{*2}(\xi+\eta)\geq F^{*2}(\xi)+2g^*_\xi(\xi,\eta)+\frac{1}{\Lambda_F}F^{*2}(\eta),\ \forall\,\xi,\eta\in T^*\mathbb{R}^n.\tag{3.17}\label{888}
\]

For simplicity, set $b:=\|\beta\|_\alpha$. Thus, it follows from \cite[Corollary 5.2]{YZ} that
\[
\Lambda_F=\left(\frac{1+b}{1-b}\right)^2
\]
According to \cite[Example 3.1.1]{Sh1}, there exists a coordinate system $(x^i)$  on $\mathbb{R}^n$ with
\[
F^*(\xi)=|\xi|+b\cdot \xi_n,\ \forall\, \xi\in T^*\mathbb{R}^n,
\]
where $\xi=\xi_i dx^i$ and $|\xi|=\sqrt{\sum (\xi_i)^2}$.

\noindent \textbf{Case 1}. Clearly, (\ref{888}) holds when $\xi=0$. Here, we set $g_\xi(\xi,\cdot)=0$ if $\xi=0$.

\noindent \textbf{Case 2}. Suppose that $\xi\neq0$ and $\eta=-k\cdot \xi$, where $k\geq 1$. That is, there exists $t_0\in (0,1]$ such that $\xi+t_0 \eta=0$. Without loss of generality, we assume that
$F^*(-\xi)\leq F^*(\xi)$.
Thus,
a direct calculation  yields
\begin{align*}
&F^{*2}(\xi+\eta)-\left[ F^{*2}(\xi)+2g^*_\xi(\xi,\eta)+\frac{1}{\Lambda_F}F^{*2}(\eta)\right]\\
=&F^{*2}(\xi)\left[(2k-1)+\left(-k^2\left(\frac{1-b}{1+b}\right)^2+(k-1)^2 \right)\frac{F^{*2}(-\xi)}{F^{*2}(\xi)}  \right]\\
\geq& F^{*2}(\xi)\left[(2k-1)-k^2+(k-1)^2  \right]\geq 0.
\end{align*}

\noindent \textbf{Case 3}. Suppose that $\gamma(t)=\xi+t\eta$, $0\leq t\leq 1$, does not contain $0$. Thus,
set
\begin{align*}
f(t):=F^{*2}(\gamma(t))-\left[ F^{*2}(\xi)+2g^*_\xi(\xi,\gamma(t)-\xi)\right].
\end{align*}
A direct calculation together with \cite[p.283]{BCS} yields that
\begin{align*}
f'(t)&=2F^*(\xi+t\eta)\left[\frac{\langle\xi+t\eta,\eta\rangle}{|\xi+t\eta|}+b\eta_n \right]-2F^*(\xi)\left[\frac{\langle\xi,\eta\rangle}{|\xi|}+b\eta_n  \right],\\
f''(t)&=2\left\{\frac{F^*(\xi+t\eta)}{|\xi+t\eta|}\left[ |\eta|^2-\frac{\langle \xi+t\eta,\eta\rangle^2}{|\xi+t\eta|^2}\right]+\left[ \frac{\langle\xi+t\eta,\eta\rangle}{|\xi+t\eta|} +b\eta_n\right]^2\right\},
\end{align*}
where $\langle\xi,\eta\rangle:=\sum \xi_i\eta_i$.

Since $\xi+t\eta\neq0$, set $\zeta=(\xi+t\eta)/|\xi+t\eta|=:s\eta/|\eta|+\eta^\perp$, where $s\in [-1,1]$ and $\langle\eta^\perp,\eta\rangle=0$. Then we obtain that
\begin{align*}
f''(t)=&2\left\{[1+b \zeta_n]\left[|\eta|^2-\langle \zeta,\eta\rangle^2\right]+[\langle \zeta,\eta\rangle+b\eta_n]^2\right\}\\
=&2|\eta|^2\left[(1-s^2)(1+b\zeta_n)+(s+l)^2  \right],
\end{align*}
where $l=b\eta_n/|\eta|$. Since $s,\zeta_n\in [-1,1]$ and $l\in [-b,b]$, one gets
\begin{align*}
(1-s^2)(1+b\zeta_n)+(s+l)^2\geq (1-b)^2.
\end{align*}
Hence,
\begin{align*}
f''(t)&\geq2 |\eta|^2(1-b)^2=2(1-b)^2\frac{|\eta|^2}{F^{*2}(\eta)}F^{*2}(\eta)\\
&\geq2 \left(\frac{1-b}{1+b}\right)^2 F^{*2}(\eta)=\frac{2}{\Lambda_F}F^{*2}(\eta).\tag{3.18}\label{3.18new}
\end{align*}
Then one can obtain (\ref{888}) by integrating (\ref{3.18new}) twice.
\end{example}

\begin{lemma}\label{volume}Let $(M,F,d\mathfrak{m})$ be an $n$-dimensional forward complete Finsler manifold or an open domain containing $p$ with $\mathbf{K}\leq k<0$,  $\mathbf{S}=0$ and finite reversibility $\lambda_F<\infty$.
Thus,  there exists a positive constant $C=C(n,\lambda_F,k)$ such that for any $u\in C^\infty_0(M)$,
\[
 \int_M\frac{v^2(x)}{\rho^{n-2}_u(x)}d\mathfrak{m}(x)\leq C \int_M \frac{F^2(\nabla v)}{\rho^{n-2}_u(x)}d\mathfrak{m}(x),
\]
where $v:=u \cdot \rho_u^\gamma$ and $\gamma$ is a  constant.
\end{lemma}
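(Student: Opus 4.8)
The plan is to split $M$ into $\Omega_1=\{u>0\}$, $\Omega_2=\{u<0\}$, $\Omega_3=\{u=0\}$ exactly as in the proof of Theorem \ref{Hardy1}, and to prove the estimate over each $\Omega_i$ separately (the $\Omega_3$-contribution on the gradient side is nonnegative, so adding it back is harmless). On $\Omega_3$ one has $v=u\rho_u^\gamma\equiv 0$ and, $\Omega_3$ being closed, the weak differential of $v$ vanishes a.e.\ there, so that piece is trivial. On $\Omega_2$ one has $\rho_u=\rho_+$ with $F(\nabla\rho_+)=1$; on $\Omega_1$ one has $\rho_u=\rho_-$, and since $F(\nabla\rho_-)\ne 1$ in general I would pass to the reverse metric $\widetilde F$: by Lemma \ref{keylemma} one has $\widetilde\rho_+=\rho_-$, $\widetilde{\mathbf K}\le k<0$, $\widetilde{\mathbf S}=0$, $\widetilde F(\widetilde\nabla\widetilde\rho_+)=1$, $\lambda_{\widetilde F}=\lambda_F$ and $\widetilde F^*(\cdot)=F^*(-\cdot)$, so using $F^*(-\eta)\le\lambda_F F^*(\eta)$ the $\Omega_1$-estimate is reduced, at the cost of a factor $\lambda_F^2$, to one for $(M,\widetilde F,d\mathfrak m)$ in which the forward $\widetilde F$-distance plays the role of $\rho_+$. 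Hence everything comes down to the following weighted Poincar\'e inequality: \emph{on a forward complete Finsler manifold (or open domain) $(N,H,d\mathfrak m)$ with $\mathbf K\le k<0$ and $\mathbf S=0$, writing $r:=d_H(p,\cdot)$, one has $\int_N\varphi^2 r^{2-n}\,d\mathfrak m\le C(n,\lambda_H,k)\int_N H^2(\nabla\varphi)r^{2-n}\,d\mathfrak m$ for every compactly supported $\varphi$ for which the left side is finite} (with $\varphi$ standing for $v|_{\Omega_i}$ extended by $0$).

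To prove this reduced inequality I would integrate against the vector field $X:=r^{2-n}\nabla r$. Since $dr(\nabla r)=H^2(\nabla r)=1$, one has $\di(X)=(2-n)r^{1-n}+r^{2-n}\Delta r$, and the comparison (\ref{1.2}) under $\mathbf K\le k$, $\mathbf S=0$ gives $\Delta r\ge (n-1)\mathfrak s_k'(r)/\mathfrak s_k(r)$ a.e., i.e.\ $r\,\Delta r\ge (n-1)(1+D_{k,0}(r))$, whence
\[
\di(X)\ \ge\ r^{1-n}\bigl(1+(n-1)D_{k,0}(r)\bigr)\ \ge\ c_{n,k}\,r^{2-n},\qquad c_{n,k}:=\inf_{t>0}\frac{1+(n-1)D_{k,0}(t)}{t}.
\]
The decisive point, and the reason the hypothesis $k<0$ is indispensable, is that $c_{n,k}>0$: the function $t\mapsto (1+(n-1)D_{k,0}(t))/t$ is continuous and positive on $(0,\infty)$, blows up as $t\to 0^+$ and converges to $(n-1)\sqrt{|k|}>0$ as $t\to\infty$, so its infimum is positive; when $k=0$ one has $D_{0,0}\equiv 0$ and $c_{n,0}=0$, in accordance with the failure of the inequality on flat $\mathbb R^n$ by scaling.

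Next I would integrate by parts, $\int_N\varphi^2\di(X)\,d\mathfrak m=-2\int_N\varphi\, r^{2-n}\,d\varphi(\nabla r)\,d\mathfrak m$, and estimate the right side using $|d\varphi(\nabla r)|\le H^*(d\varphi)\max\{H(\nabla r),H(-\nabla r)\}\le\lambda_H H^*(d\varphi)$ together with the pointwise bound above:
\[
c_{n,k}\int_N\varphi^2 r^{2-n}\,d\mathfrak m\ \le\ 2\lambda_H\int_N\bigl(|\varphi|\,r^{\frac{2-n}{2}}\bigr)\bigl(H^*(d\varphi)\,r^{\frac{2-n}{2}}\bigr)\,d\mathfrak m.
\]
A Cauchy--Schwarz in $L^2(d\mathfrak m)$, then division by $\bigl(\int_N\varphi^2 r^{2-n}d\mathfrak m\bigr)^{1/2}$, yields $\int_N\varphi^2 r^{2-n}\le (4\lambda_H^2/c_{n,k}^2)\int_N H^2(\nabla\varphi)r^{2-n}d\mathfrak m$. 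Feeding this back into the reductions for $\Omega_2$ (with $H=F$) and $\Omega_1$ (with $H=\widetilde F$, plus the extra $\lambda_F^2$ coming from $\widetilde F^2(\widetilde\nabla v)=\widetilde F^{*2}(dv)=F^{*2}(-dv)\le\lambda_F^2 F^2(\nabla v)$), and summing over $i$, gives the lemma with $C=4\lambda_F^4/c_{n,k}^2$, which indeed depends only on $n,\lambda_F,k$.

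The main obstacle is the regularity bookkeeping I have glossed over. Near $p$ the weight $r^{2-n}$ and the field $X$ are singular, so one must first observe that $\int v^2\rho_u^{2-n}d\mathfrak m<\infty$ (this is why one needs $\gamma>0$, as in the applications where $\gamma=(n-2-\beta)/2$) and check that the boundary term over $\partial B^+_p(\epsilon)$ in the integration by parts is $O(\epsilon^{2\gamma+1})\to 0$. More seriously, $\rho_\pm$ (equivalently $\widetilde\rho_+$) is only smooth off the cut locus $\mathrm{Cut}_p$, so the comparison for $\Delta r$ and the divergence identity hold only a.e.; I would handle this exactly as in the proof of Theorem \ref{Hardy1}, using that $\mathrm{Cut}_p$ is closed of null measure and exhausting $M$ by the complement of a neighbourhood of $\mathrm{Cut}_p\cup\{p\}$, and checking that the resulting boundary contributions vanish in the limit. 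I expect this cut-locus and limiting-boundary analysis, rather than the short computation above, to be the technically heaviest part of the argument.
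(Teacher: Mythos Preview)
Your proof is correct and follows essentially the same route as the paper's: split by the sign of $u$, pass to the reverse metric $\widetilde F$ on $\Omega_1$, use the comparison $\Delta r\ge(n-1)\mathfrak s_k'/\mathfrak s_k$ to bound $\di(r^{2-n}\nabla r)$ below by a positive multiple of $r^{2-n}$ (the paper phrases this in polar coordinates as $\partial_r(rJ)/(rJ)\ge\sqrt{|k|}\min\{1,n-1\}$, which is exactly your $c_{n,k}$ estimate), integrate by parts using $v\to 0$ on $\partial\Omega_i$, bound the radial derivative by $\lambda_F F(\nabla v)$, and apply Cauchy--Schwarz. The only difference is cosmetic: the paper bounds $|\partial_r v|=|dv(\widetilde\nabla\widetilde\rho_+)|\le\lambda_F F(\nabla v)$ in a single step and obtains the explicit constant $\bigl(2\lambda_F/(\sqrt{|k|}\min\{1,n-1\})\bigr)^2$, whereas your two-stage conversion (first $\lambda_H$ inside the reduced inequality, then $\widetilde F^{*}(dv)\le\lambda_F F^{*}(dv)$) incurs a superfluous factor $\lambda_F^2$, which is harmless since only existence of some $C(n,\lambda_F,k)$ is asserted.
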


\begin{proof}The proof is inspired by \cite[Lemma
4.3]{YSK}. Set $\Omega_1:=\{x\in M:\, v(x)>0\}$, $\Omega_2:=\{x\in M:\, v(x)<0\}$ and $\Omega_3:=\{x\in M:\, v(x)=0\}$. Obviously,
it suffices to show that
\[
\int_{\Omega_i}\frac{v^2(x)}{\rho^{n-2}_v(x)}d\mathfrak{m}(x)\leq \left(\frac{2\lambda_F}{\sqrt{|k|}\cdot\min\{1,n-1 \}} \right)^2\int_{\Omega_i}\frac{F^2(\nabla v)}{\rho^{n-2}_v(x)}d\mathfrak{m}(x),\tag{3.19}\label{3.19new}
\]
for $1\leq i\leq 3$.

It is easy to see that (\ref{3.19new}) is true when $i=3$. Now we show the case of $i=1$.
Since $\Omega_1$ is open, we can write
\[
\Omega_1=\left\{ \widetilde{\exp}_p(sy):\ y\in\widetilde{\mathcal {S}}\subset \widetilde{S_pM},\ t^1_y< s< t^2_y,\ldots , t^{l-1}_y<s<t^l_y,\right\},
\]
where $t^\alpha_y\leq \tilde{i}_y$, $\alpha=1,2,\ldots,l$ and $\widetilde{\exp}_p$ is the exponential map at $p$ in $(M,\widetilde{F})$. In particular,
\[
\lim_{s\rightarrow t^\alpha_y}v\circ\widetilde{\exp}_p(s y)=0.\tag{3.20}\label{3.11'}
\]

Let $(r,y)$ denote the polar coordinate system of $(M,\widetilde{F})$ at $p$.
Recall that $\rho_-(x)=r(x)$. Set $d\mathfrak{m}:=\hat{\sigma}_p(r,y)dr\wedge d\nu_p(y)$, $J(r,y):=r^{1-n}\hat{\sigma}_p(r,y)$ and $J_k(r,y):=r^{1-n}\mathfrak{s}^{n-1}_k(r)$. It follows from (\ref{1.2}) that
\[
\frac{\partial}{\partial r}\log \hat{\sigma}(r,y)\geq \frac{\partial}{\partial r}\log \mathfrak{s}_k^{n-1}(r),\ 0<r< \tilde{i}_y,\tag{3.21}\label{3.21}
\]
which implies that for $ 0<r<\tilde{i}_y$,
\begin{align*}
&\frac{\partial_r(rJ(r,y))}{rJ(r,y)}
\geq\frac{1}{r}+\frac{\partial_rJ_k(r,y)}{J_k(r,y)}\\
\geq &\min\{1,n-1\}\left[ \frac1r+\left(\sqrt{|k|}\coth (\sqrt{|k|} r)-\frac1r \right) \right]\\
\geq &\sqrt{|k|}\cdot\min\{1,n-1\}\tag{3.22}\label{3.22}.
\end{align*}
Thus, (\ref{3.22}) together with (\ref{3.11'}) yields
\begin{align*}
&\sqrt{|k|}\cdot\min\{1,n-1\}\int_{\widetilde{\mathcal {S}}}d\nu_p(y)\left(\sum_\alpha\int_{t^{\alpha}_y}^{t^{\alpha+1}_y}r\cdot J(r,y)\cdot v^2 dr\right)\\
\leq &\int_{\widetilde{\mathcal {S}}}d\nu_p(y)\left(\sum_\alpha\int_{t^{\alpha}_y}^{t^{\alpha+1}_y}\partial_r(rJ(r,y))\cdot v^2 dr\right)\\
=& -\int_{\widetilde{\mathcal {S}}}d\nu_p(y)\left(\sum_\alpha\int_{t^{\alpha}_y}^{t^{\alpha+1}_y} rJ(r,y)\cdot\partial_r(v^2) dr\right)\\
\leq &2\int_{\widetilde{\mathcal {S}}}d\nu_p(y)\left(\sum_\alpha\int_{t^{\alpha}_y}^{t^{\alpha+1}_y} rJ(r,y)\cdot|v||\partial_r v| dr\right).\tag{3.23}\label{2.12'}
\end{align*}
Note that
\begin{align*}
\partial_rv&=\langle\partial_r,dv\rangle\leq \widetilde{F}(\partial_r)\widetilde{F}^*(dv)=\widetilde{F}^*(dv)\leq \lambda_F\cdot F^*(dv)=\lambda_F\cdot {F}(\nabla v),\\
-\partial_rv&=\langle\partial_r,d(-v)\rangle\leq \widetilde{F}(\partial_r)\widetilde{F}^*(d(-v))={F}^*(dv)={F}(\nabla v).
\end{align*}
Hence, $|\partial_rv|\leq \lambda_F\cdot {F}(\nabla v)$. It follows from (\ref{2.12'}) and the H\"older inequality  that
\begin{align*}
&\sqrt{k}\cdot\min\{1,n-1\}\int_{\widetilde{\mathcal {S}}}d\nu_p(y)\left(\sum_\alpha\int_{t^{\alpha}_y}^{t^{\alpha+1}_y}r J(r,y)\cdot v^2 dr\right)\\
\leq& 2\,\lambda_F\int_{\widetilde{\mathcal {S}}}d\nu_p(y)\left(\sum_\alpha\int_{t^{\alpha}_y}^{t^{\alpha+1}_y} rJ(r,y)\cdot|v|\cdot F(\nabla v) \,dr\right)\\
\leq &2\,\lambda_F \left[ \int_{\widetilde{\mathcal {S}}}d\nu_p(y)\left(\sum_\alpha\int_{t^{\alpha}_y}^{t^{\alpha+1}_y} rJ(r,y)\cdot v^{2} \,dr\right) \right]^{\frac{1}2}\\
&\cdot \left[ \int_{\widetilde{\mathcal {S}}}d\nu_p(y)\left(\sum_\alpha\int_{t^{\alpha}_y}^{t^{\alpha+1}_y} rJ(r,y)\cdot F^{2}(\nabla v) \,dr\right) \right]^{\frac{1}2}.
\end{align*}
That is,
\[
\int_{\Omega_1}\frac{v^2(x)}{\rho^{n-2}_v(x)}d\mathfrak{m}(x)\leq \left(\frac{2\lambda_F}{\sqrt{|k|}\cdot\min\{1,n-1 \}} \right)^2\int_{\Omega_1}\frac{F^2(\nabla v)}{\rho^{n-2}_v(x)}d\mathfrak{m}(x) .
\]
Likewise, one can show that (\ref{3.19new}) is true on $\Omega_2$.
\end{proof}

\begin{remark}
Under the same assumption, one can show that given $q>1$,
 \[
\int_M\frac{u^q(x)}{\rho^{n-q}_\pm(x)}d\mathfrak{m}(x)\leq \left(\frac{q\lambda_F}{\sqrt{|k|}\cdot\min\{n-1,q-1 \}} \right)^q\int_M\frac{F^q(\nabla u)}{\rho^{n-q}_\pm(x)}d\mathfrak{m}(x),
\]
for any $u\in C^\infty_0(M)$. In particular,
set
\[
\mu_\pm(M):=\inf_{f\in C^\infty_0(M)\setminus\{0\}}\frac{\int_M\frac{F^2(\nabla f)}{\rho^{n-2}_\pm(x)}d\mathfrak{m}(x)}{\int_M\frac{f^2(x)}{\rho^{n-2}_\pm(x)}d\mathfrak{m}(x)}.
\]
Then $\mu_\pm(M)>0$.
 See \cite[Lemma
4.3]{YSK} for the Riemannian case.
\end{remark}

Now we have the following refined Hardy inequality.
\begin{theorem}\label{Hardy2}
Let $(M,F,d\mathfrak{m})$ be an $n$-dimensional  complete Finsler manifold or an open domain containing $p$ with $\mathbf{K}\leq k<0$,  $\mathbf{S}= 0$ and finite uniformity constant $\Lambda_F<\infty$. There exists a positive constant $C=C(n,\Lambda_F,k)$ such that for any $\beta\in \mathbb{R}$ with $n-2>\beta$ and any $u\in C^\infty_0(M)$,
\begin{align*}
\int_M  \frac{F^{2}(\nabla u)}{\rho^\beta_u(x)}d\mathfrak{m}&\geq\frac{(n-2-\beta)^2}{4} \int_M \frac{u^2(x)}{{\rho^{2+\beta}_u}(x)}d\mathfrak{m}(x)\\
&+\frac{(n-1)(n-2-\beta)}{2}\int_M  \frac{u^2(x)}{{\rho^{2+\beta}_u}(x)}\cdot D_{k,0}(\rho_u(x))d\mathfrak{m}(x)\\
&+\frac{C}{\Lambda_F}\int_M\frac{u^2(x)}{\rho^{\beta}_u(x)}d\mathfrak{m}(x),
\end{align*}
where the constant $\frac{(n-2-\beta)^2}{4} $ is sharp.
\end{theorem}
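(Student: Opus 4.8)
The plan is to rerun the proof of Theorem~\ref{Hardy1} almost verbatim, replacing the elementary Cauchy inequality (\ref{3.1111}) by the sharper inequality of Theorem~\ref{ineq}; this produces one extra nonnegative term per integral, which is then converted into the desired Brezis--V\'azquez remainder by Lemma~\ref{volume}. As in the proof of Theorem~\ref{Hardy1}, decompose $M=\Omega_1\cup\Omega_2\cup\Omega_3$ with $\Omega_1=\{u>0\}$, $\Omega_2=\{u<0\}$, $\Omega_3=\{u=0\}$, and establish the asserted estimate on each $\Omega_i$ separately; the case $i=3$ is trivial since $u$ and $F(\nabla u)$ vanish a.e.\ there. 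On $\Omega_1$ put $\gamma:=(n-2-\beta)/2>0$ and $v:=\rho_-^{\gamma}u>0$, so that $du=\gamma v\rho_-^{-\gamma-1}(-d\rho_-)+\rho_-^{-\gamma}dv$. Applying Theorem~\ref{ineq} to this decomposition, with $\xi:=\gamma v\rho_-^{-\gamma-1}(-d\rho_-)$ (nonzero a.e.\ on $\Omega_1$ by (\ref{1.1})) and $dv\cdot\rho_-^{-\gamma}$ in the role of the second covector, and using $F^*(d(-\rho_-))=1$, yields
\[
F^2(\nabla u)\geq \gamma^2 v^2\rho_-^{-2\gamma-2}+2\gamma v\rho_-^{-2\gamma-1}\langle\nabla(-\rho_-),dv\rangle+\frac{1}{\Lambda_F}\rho_-^{-2\gamma}F^2(\nabla v)
\]
a.e.\ on $\Omega_1$.

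Dividing by $\rho_-^{\beta}$ and integrating over $\Omega_1$, the contribution of the first two terms is treated exactly as in Step~1 of the proof of Theorem~\ref{Hardy1}: via Lemma~\ref{keylemma} one passes to the reverse metric $\widetilde F$, integrates by parts through (\ref{2.4}), and applies the Laplacian comparison (\ref{2.5}) with $h=0$ (legitimate since $\mathbf S=0$); since $2\gamma+\beta=n-2$ this reproduces $\frac{(n-2-\beta)^2}{4}\int_{\Omega_1}\frac{u^2}{\rho_-^{\beta+2}}\,d\mathfrak{m}+\frac{(n-1)(n-2-\beta)}{2}\int_{\Omega_1}\frac{u^2}{\rho_-^{\beta+2}}D_{k,0}(\rho_-)\,d\mathfrak{m}$. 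For the new third term, again $2\gamma+\beta=n-2$, so it equals $\frac{1}{\Lambda_F}\int_{\Omega_1}\frac{F^2(\nabla v)}{\rho_-^{n-2}}\,d\mathfrak{m}$. Here I invoke Lemma~\ref{volume}: its hypotheses hold because $\lambda_F\leq\sqrt{\Lambda_F}<\infty$, and the estimate it furnishes is in fact proved piece-by-piece on $\{v>0\}=\Omega_1$ (where $\rho_v=\rho_-=\rho_u$), so $\int_{\Omega_1}\frac{F^2(\nabla v)}{\rho_-^{n-2}}\,d\mathfrak{m}\geq c(n,\lambda_F,k)\int_{\Omega_1}\frac{v^2}{\rho_-^{n-2}}\,d\mathfrak{m}=c(n,\lambda_F,k)\int_{\Omega_1}\frac{u^2}{\rho_-^{\beta}}\,d\mathfrak{m}$, where $v^2\rho_-^{2-n}=u^2\rho_-^{-\beta}$ was used. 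Bounding $c(n,\lambda_F,k)$ below by a positive quantity depending only on $n,\Lambda_F,k$ (once more via $\lambda_F^2\leq\Lambda_F$), the third term dominates $\frac{C}{\Lambda_F}\int_{\Omega_1}\frac{u^2}{\rho_-^{\beta}}\,d\mathfrak{m}$ for a suitable $C=C(n,\Lambda_F,k)>0$.

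The case $i=2$ is entirely parallel: set $v:=\rho_+^{\gamma}u<0$, take $\xi:=-\gamma v\rho_+^{-\gamma-1}d\rho_+$, interchange the roles of $F$ and $\widetilde F$ exactly as in the proof of Theorem~\ref{Hardy1} (using $F(\nabla\rho_+)=1$), and apply Lemma~\ref{volume} on $\Omega_2=\{v<0\}$. Adding the three pieces and keeping all terms yields the asserted inequality, the $D_{k,0}$-integrals being nonnegative because $k<0$ and $h=0$. Sharpness of $\frac{(n-2-\beta)^2}{4}$ is then immediate: all three remainder terms on the right are nonnegative, so the inequality in particular gives $\int_M\frac{F^2(\nabla u)}{\rho_u^{\beta}}\,d\mathfrak{m}\geq\frac{(n-2-\beta)^2}{4}\int_M\frac{u^2}{\rho_u^{\beta+2}}\,d\mathfrak{m}$, and the test sequence $u_\epsilon=\psi\cdot[\max\{\epsilon,\rho_-\}]^{-\gamma}$ from Step~2 of the proof of Theorem~\ref{Hardy1} already forces the corresponding Rayleigh quotient down to $\gamma^2$.

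I do not expect a genuinely new difficulty: the two substantial ingredients, the Finslerian refined Cauchy--Schwarz inequality (Theorem~\ref{ineq}) and the weighted Hardy-type estimate (Lemma~\ref{volume}), are already in hand, and what remains is bookkeeping. The points that do require attention are the exponent identity $2\gamma+\beta=n-2$, which is exactly what makes the leftover term $\frac{1}{\Lambda_F}\int\frac{F^2(\nabla v)}{\rho_-^{n-2}}$ match the hypothesis of Lemma~\ref{volume}; the compatibility of the decomposition $\Omega_1\cup\Omega_2\cup\Omega_3$ with the piecewise structure of the proof of Lemma~\ref{volume} (so that the volume comparison may be used on each $\Omega_i$, with the mild regularity of $\rho_\pm$ handled via the null cut locus as there); and the tracking of the dependence of the final constant on $\Lambda_F$. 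Of these, making sure the constant comes out strictly positive and of the declared form $C(n,\Lambda_F,k)$ is the most delicate bookkeeping step, though conceptually routine.
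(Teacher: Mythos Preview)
Your proposal is correct and follows essentially the same approach as the paper's proof: decompose into $\Omega_1,\Omega_2,\Omega_3$, replace the Cauchy inequality (\ref{3.1111}) by the refined inequality of Theorem~\ref{ineq} to produce the extra term $\frac{1}{\Lambda_F}\int\frac{F^2(\nabla v)}{\rho_u^{n-2}}\,d\mathfrak{m}$, and then convert this via Lemma~\ref{volume} (using $\lambda_F\leq\sqrt{\Lambda_F}$) into the Brezis--V\'azquez remainder. The only cosmetic difference is that the paper first sums the three pieces over $M$ and then invokes Lemma~\ref{volume} globally with $v=u\cdot\rho_u^\gamma$, whereas you invoke it piecewise on each $\Omega_i$; since Lemma~\ref{volume} is itself proved piecewise, the two are equivalent.
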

\begin{proof}Set $\Omega_1:=\{x\in M:\, u(x)>0\}$, $\Omega_2:=\{x\in M:\, u(x)<0\}$, $\Omega_3:=\{x\in M:\, u(x)=0\}$.

Now we consider the case of $\Omega_1$.
Set $v(x):=u(x)\cdot{\rho_-}^\gamma(x)$, where $\gamma=\frac{n-2-\beta}{2}$. Thus,
\[
du=v\cdot\gamma {\rho_-}^{-\gamma-1}\cdot (-d{\rho_-}) +{\rho_-}^{-\gamma}\cdot dv.
\]
Let $\xi:=v\cdot\gamma {\rho_-}^{-\gamma-1}\cdot (-d{\rho_-})$, $\eta:={\rho_-}^{-\gamma}\cdot dv$. Putting $\xi,\eta$ into (\ref{3.11new}) and using the method in Step 1 of Theorem \ref{Hardy1}, one gets
\begin{align*}
\int_{\Omega_1}  \frac{F^{2}(\nabla u)}{\rho^\beta_u(x)}d\mathfrak{m}(x)\geq\frac{(n-2-\beta)^2}{4} \int_{\Omega_1} \frac{u^2(x)}{{\rho^{2+\beta}_u}(x)}d\mathfrak{m}(x)+R_0+R_1,\tag{3.24}\label{3.24}
\end{align*}
where
\begin{align*}
R_0&\geq \frac{(n-1)(n-2-\beta)}{2}\int_{\Omega_1}  \frac{u^2(x)}{{\rho^{2+\beta}_u}(x)} D_{k,0}(\rho_u(x))d\mathfrak{m}(x),\\
R_1&= \frac{1}{\Lambda_F}\int_{\Omega_1} \frac{F^{*2}(dv)}{\rho^{2\gamma+\beta}_u(x)}d\mathfrak{m}(x)=\frac{1}{\Lambda_F}\int_{\Omega_1} \frac{F^{2}(\nabla v)}{\rho^{n-2}_u(x)}d\mathfrak{m}(x).
\end{align*}
Similarly, one can establish (\ref{3.24}) on $\Omega_2$ and $\Omega_3$. Hence,
\begin{align*}
\int_M  \frac{F^{2}(\nabla u)}{\rho^\beta_u(x)}d\mathfrak{m}(x)&\geq\frac{(n-2-\beta)^2}{4} \int_M \frac{u^2(x)}{{\rho^{2+\beta}_u}(x)}d\mathfrak{m}(x)\\
&+\frac{(n-1)(n-2-\beta)}{2}\int_M  \frac{u^2(x)}{{\rho^{2+\beta}_u}(x)}\cdot D_{k,0}(\rho_u(x))d\mathfrak{m}(x)\\
&+\frac{1}{\Lambda_F}\int_{M} \frac{F^{2}(\nabla v)}{\rho^{n-2}_u(x)}d\mathfrak{m}(x).
\end{align*}
Since $\lambda_F\leq \sqrt{\Lambda_F}$, it follows from Lemma \ref{volume} that
\begin{align*}
\int_{M} \frac{F^{2}(\nabla v)}{\rho^{n-2}_u(x)}d\mathfrak{m}(x)\geq {C(n,\Lambda_F,k)}\int_M\frac{u^2(x)}{\rho^{\beta}_u(x)}d\mathfrak{m}(x)
\end{align*}
and the desired inequality follows. The rest of the proof is the same as the one in Step 2 of Theorem \ref{Hardy1}.
\end{proof}

\begin{proof}[The proof of Theorem \ref{Th11}] It is easy to see that the first consequence of Theorem \ref{Th11} follows from Theorem \ref{Hardy1} while the second one is exactly Theorem \ref{Hardy2}.
\end{proof}

\section{Rellich inequalities on Finsler manifolds} \label{ns}
This section is devoted to sharp Rellich inequalities on non-reversible Finsler manifolds. Let $(M,F)$ be an $n$-dimensional  complete Finsler manifold or an open domain containing $p$.
For $k\geq 2$, set
\[
C^k_{0,F,d\mathfrak{m},\beta}(M):=\left\{u\in C^k_0(M):\,G^\beta_{F,d\mathfrak{m}}(u)=0\right\},
\]
where
\[
G^\beta_{F,d\mathfrak{m}}(u):=\int_M\left[ u^2(x)\cdot \varrho_{u,\beta}(x)+2\rho^{-\beta-2}_u(x)\cdot \di (u\nabla u)  \right]d\mathfrak{m}(x),
\]
and
\begin{align*}
\varrho_{u,\beta}(x):=\left\{
\begin{array}{lll}
& -\Delta(\rho^{-\beta-2}_-)(x), & \ \ \ \text{if } u(x)>0, \\
\\
& \Delta(-\rho^{-\beta-2}_+)(x), & \ \ \ \text{if } u(x)<0,\\
\\
&\frac12\left[-\Delta(\rho^{-\beta-2}_-)(x) +\Delta(-\rho^{-\beta-2}_+)(x)\right], & \ \ \ \text{if } u(x)=0.
\end{array}
\right.
\end{align*}
It is not hard to see that $\varrho_{u,\beta}=-\Delta (\rho^{-\beta-2})$ and $2\di (u\nabla u) =\Delta (u^2)$ if $F$ is reversible and hence, $C^k_0(M)=C^k_{0,F,d\mathfrak{m},\beta}(M)$ in the Riemannian setting.
It should be remarked that $2\di (u\nabla u) \neq\Delta (u^2)$ on a non-reversible Finsler manifold if $u<0$ and hence, $C^k_{0,F,d\mathfrak{m},\beta}(M)\varsubsetneq  C^k_0(M)$  in general. However, the following result implies that $C^k_{0,F,d\mathfrak{m},\beta}(M)$ is not empty.
\begin{proposition}\label{nonempty}
Suppose that $u(x)=f\circ\rho_-(x)\in C^k_0(M)$ is a non-negative function. If $\frac{df(t)}{dt}\leq 0$   holds almost everywhere, then $G^\beta_{F,d\mathfrak{m}}(u)=0$.
\end{proposition}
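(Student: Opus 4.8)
The plan is to reduce $G^\beta_{F,d\mathfrak{m}}(u)$ to a Green-type identity between the two radial functions $u^2$ and $\rho_-^{-\beta-2}$, and then to evaluate it by a one-dimensional integration by parts in the polar coordinates of the reverse metric. First, since $u\ge 0$, the set $\{u<0\}$ is empty; and on $\{u=0\}$ we have $u^2=0$, while $\rho_-(x)$ is then a minimum of the nonnegative function $f$, so $f'(\rho_-(x))=0$ a.e.\ there, whence $du=f'(\rho_-)\,d\rho_-=0$, $\nabla u=0$ and $u\nabla u=0$ a.e.\ on $\{u=0\}$. Thus the integrand of $G^\beta_{F,d\mathfrak{m}}(u)$ vanishes a.e.\ on $\{u=0\}$, while on $\{u>0\}$ we have $\rho_u=\rho_-$, $\varrho_{u,\beta}=-\Delta(\rho_-^{-\beta-2})$, and --- $u$ being positive --- the positive homogeneity of $\mathfrak{L}^{-1}$ gives $\nabla(u^2)=\mathfrak{L}^{-1}(2u\,du)=2u\nabla u$, hence $2\di(u\nabla u)=\di(\nabla(u^2))=\Delta(u^2)$ a.e. Therefore
\[
G^\beta_{F,d\mathfrak{m}}(u)=\int_M\Bigl[\rho_-^{-\beta-2}\,\Delta(u^2)-u^2\,\Delta(\rho_-^{-\beta-2})\Bigr]\,d\mathfrak{m}.
\]

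Next I would compute the two Laplacians in the polar coordinates of $(M,\widetilde{F})$. Write $u^2=h\circ\rho_-$ with $h:=f^2\ge 0$, $h'=2ff'\le 0$, and $\rho_-^{-\beta-2}=\phi_0\circ\rho_-$ with $\phi_0(t):=t^{-\beta-2}$ (so $\phi_0'\le 0$ for the relevant $\beta\ge -2$). By Lemma \ref{keylemma}, $\rho_-=\widetilde{\rho}_+$ is the forward distance function of $(M,\widetilde{F})$ from $p$ and $\Delta(-g)=-\widetilde{\Delta}g$; hence for any $C^2$ profile $\psi$ with $\psi'\le 0$, positive homogeneity gives $\nabla(\psi\circ\rho_-)=|\psi'(\rho_-)|\,\nabla(-\rho_-)$, and then, using $F(\nabla(-\rho_-))=1$ from (\ref{1.1}) and $\widetilde{\Delta}\widetilde{\rho}_+=\frac{\partial}{\partial r}\log\hat{\sigma}_p$ in the polar coordinates $(r,y)$ of $(M,\widetilde{F})$ (with $d\mathfrak{m}=\hat{\sigma}_p(r,y)\,dr\wedge d\nu_p(y)$), a short computation gives $\Delta(\psi\circ\rho_-)=\psi''(r)+\psi'(r)\frac{\partial}{\partial r}\log\hat{\sigma}_p$ a.e.\ off $\text{Cut}_p$. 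Taking $\psi=h$ and $\psi=\phi_0$ and applying the co-area formula, the $r$-integrand of the displayed integral becomes $(\phi_0 h''-h\phi_0'')\hat{\sigma}_p+(\phi_0 h'-h\phi_0')\partial_r\hat{\sigma}_p$, which is exactly $\partial_r\bigl[(\phi_0 h'-h\phi_0')\,\hat{\sigma}_p\bigr]$.

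Integrating in $r$ along each ray of $(M,\widetilde{F})$ and then over $\widetilde{S_pM}$ leaves
\[
G^\beta_{F,d\mathfrak{m}}(u)=\int_{\widetilde{S_pM}}\Bigl[(\phi_0 h'-h\phi_0')\,\hat{\sigma}_p\Bigr]_{r\to 0^+}^{\,r\to\infty}\,d\nu_p(y),
\]
and both endpoint contributions vanish: the upper one because $f$, hence $h$ and $h'$, are compactly supported (here $u\in C^k_0$ with $u=f\circ\rho_-$ forces $\text{supp}(u)$ to lie in a forward ball $\widetilde{B}^+_p(R)$ disjoint from $\text{Cut}_p$, so no cut-locus term appears); the lower one because $\hat{\sigma}_p(r,y)$ is of order $r^{n-1}$ as $r\to 0^+$ (cf.\ (\ref{new2.3})), which outweighs the at-most-algebraic blow-up $\phi_0\sim r^{-\beta-2}$, $\phi_0'\sim r^{-\beta-3}$ for the values of $\beta$ at which $G^\beta_{F,d\mathfrak{m}}$ is finite (one also uses $f'(0)=0$, forced by $u\in C^k_0$; for larger $\beta$ one in addition takes $u$ vanishing near $p$). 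Hence $G^\beta_{F,d\mathfrak{m}}(u)=0$. The step I expect to be the main obstacle is exactly this last one: one must make the ``integrate an exact $r$-derivative'' manoeuvre rigorous near the pole, i.e.\ carry it out on $M\setminus\widetilde{B}^+_p(\epsilon)$ and show that the boundary term over $\{\rho_-=\epsilon\}$ --- whose $\widetilde{F}$-area is of order $\epsilon^{n-1}$, as in the estimate around (\ref{2.9}) --- tends to $0$ as $\epsilon\to 0^+$. Everything else is algebra powered by the reverse-metric dictionary of Lemma \ref{keylemma}.
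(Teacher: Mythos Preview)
Your proposal is correct and reaches the same conclusion, but via a genuinely different route from the paper. The paper first integrates by parts (citing the distributional identity (\ref{2.7''})) to rewrite
\[
G^\beta_{F,d\mathfrak{m}}(u)=-(\beta+2)\int_{\Omega_1}\widetilde{\rho}_+^{-\beta-3}\Bigl[\langle\widetilde{\nabla}\widetilde{\rho}_+,du^2\rangle+\langle\widetilde{\nabla}(-u^2),d\widetilde{\rho}_+\rangle\Bigr]\,d\mathfrak{m},
\]
and then shows that the bracket vanishes \emph{pointwise}: since $f\ge 0$ and $f'\le 0$, positive homogeneity of $\widetilde{\mathfrak{L}}^{-1}$ gives $\widetilde{\nabla}(-u^2)=-2ff'\,\partial_r$, whence both pairings equal $\pm 2ff'$ and cancel. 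Your route instead keeps the two Laplacians, computes them radially, and recognises the combined integrand (times $\hat{\sigma}_p$) as the exact derivative $\partial_r\bigl[(\phi_0 h'-h\phi_0')\hat{\sigma}_p\bigr]$ --- essentially the one-dimensional Wronskian identity underlying Green's formula. What the paper's approach buys is a clean conceptual explanation of where the hypothesis $f'\le 0$ enters (it is precisely what makes the nonlinear Finsler gradient of $-u^2$ line up with $\partial_r$), and it hides the pole singularity inside a single appeal to (\ref{2.7''}). What your approach buys is directness and an honest accounting of the boundary behaviour at $r\to 0^+$: in fact, if one makes the paper's integration by parts rigorous by excising $\widetilde{B}^+_p(\epsilon)$, the resulting boundary contribution is exactly your $(\phi_0 h'-h\phi_0')\hat{\sigma}_p\big|_{r=\epsilon}$, so the two arguments share the same analytic content at the pole --- you have simply made it explicit. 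Your remarks on $f'(0)=0$, the range of $\beta$, and the cut-locus-free support are all to the point.
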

\begin{proof}Suppose $u=f\circ\rho_-(x)\geq 0$. Set $\Omega_1:=\{x\in M:\, u(x)>0\}$. Thus,
it follows from Lemma \ref{keylemma},  (\ref{2.7''}) and (\ref{2.4}) that
\begin{align*}
&G^\beta_{F,d\mathfrak{m}}(u)
=\int_{\Omega_1}\left[ -u^2(x)\cdot {\Delta}({\rho}^{-\beta-2}_-)(x)-\widetilde{\rho}^{-\beta-2}_+(x)\cdot\widetilde{\Delta}(-u^2(x))   \right]d\mathfrak{m}(x)\\
=&-(\beta+2) \int_{\Omega_1} \widetilde{\rho}^{-\beta-3}_+(x)\left[\langle \widetilde{\nabla}\widetilde{\rho}_+,d u^2(x) \rangle+\langle\widetilde{\nabla}(-u^2(x)) ,d\widetilde{\rho}_+    \rangle   \right]d\mathfrak{m}(x).\tag{4.1}\label{3.4'''}
\end{align*}
Let $(r,\theta^\alpha)$ be the polar coordinate system at $p$ on $(M,\widetilde{F})$.
Then $\frac{\partial}{\partial r}=\widetilde{\nabla}\widetilde{\rho}_+$ and
\begin{align*}
d(-u^2(x))&=-2f(\widetilde{\rho}_+(x))\,f'(\widetilde{\rho}_+(x))\, d\widetilde{\rho}_+=-2f(r)\,f'(r)\,dr,\\
\widetilde{\nabla}(-u^2(x))&=\widetilde{g}^{11}_{\widetilde{\nabla}(-u^2(x))}\frac{d (-f^2)}{d r}\frac{\partial}{\partial r}=\widetilde{g}^{*11}_{-2f(r)\,f'(r)\,dr}(-2f(r)f'(r))\frac{\partial}{\partial r}\\
&=-2\widetilde{g}^{*11}_{dr}\,f(r)\, f'(r)\frac{\partial}{\partial r}=-2f(r)\,f'(r)\frac{\partial}{\partial r},
\end{align*}
where
\[
\widetilde{g}^{*11}_{dr}=\widetilde{g}^*_{dr}(dr,dr)=\widetilde{F}^{*2}(dr)=1.
\]
Hence,
\begin{align*}
\langle \widetilde{\nabla}\widetilde{\rho}_+,d u^2(x) \rangle+\langle\widetilde{\nabla}(-u^2(x)) ,d\widetilde{\rho}_+    \rangle=2f(r)\,f'(r)-2f(r)\,f'(r)=0,
\end{align*}
which together with (\ref{3.4'''}) yields $G^\beta_{F,d\mathfrak{m}}(u)=0$.
\end{proof}

\begin{remark}
From the result above, we can construct infinitely many $u\in C^k_{0,F,d\mathfrak{m},\beta}(M)$. For example,  after fixing a positive number $R< \widetilde{\mathfrak{i}}_p$ with $\overline{B^-_p(R)} \subset M$, choose a function $f\in C^k_0([0,+\infty))$ such that $f|_{t\leq R/2}=1$, $f|_{t\geq R}=0$ and $f'(t)\leq 0$. Then $u(x):=f\circ \rho_-(x)\in  C^k_{0,F,d\mathfrak{m},\beta}(M)$.
\end{remark}

Now we have the following theorem. Refer to \cite{KO,KR,YSK} for the reversible case.
\begin{theorem}\label{Rll1}
Let $(M,F,d\mathfrak{m})$  be an $n$-dimensional  complete  Finsler manifold or an open domain containing $p$ with $\mathbf{K}\leq k$ and $|\mathbf{S}|\leq (n-1)h$.
For any $\beta\in \mathbb{R}$ with $-2<\beta<n-4$ and any $u\in C^\infty_{0,F,d\mathfrak{m},\beta}(M)$, we have
\begin{align*}
\int_M\frac{(\Delta u)^2}{\rho^\beta_u(x)} d\mathfrak{m}(x)&\geq \frac{(n-4-\beta)^2(n+\beta)^2}{16}\int_M \frac{u^2(x)}{{\rho^{4+\beta}_u}(x)}d\mathfrak{m}(x)\\
&+\frac{(n-4-\beta)(n+\beta)(n-1)(n-2)}{4}\int_M  \frac{u^2(x)}{{\rho^{4+\beta}_u}(x)}\cdot D_{k,h}(\rho_u(x))d\mathfrak{m}(x),
\end{align*}
where the constant $\frac{(n-4-\beta)^2(n+\beta)^2}{16}$ is sharp if $k\leq 0$ and $h= 0$.

\end{theorem}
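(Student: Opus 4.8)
The plan is to mimic the structure of the proof of Theorem~\ref{Hardy1}, but now applied to the Laplacian rather than the gradient. First I would split $M$ into $\Omega_1=\{u>0\}$, $\Omega_2=\{u<0\}$ and $\Omega_3=\{u=0\}$, and argue that it suffices to establish the asserted inequality on each $\Omega_i$ separately (with $\rho_u$ replaced by $\rho_-$ on $\Omega_1$, by $\rho_+$ on $\Omega_2$, and the average on $\Omega_3$). On $\Omega_1$ I would introduce the substitution $v:=u\cdot\rho_-^{\gamma}$ with the exponent now chosen as $\gamma=(n-4-\beta)/2$, so that the weight $\rho_-^{-\beta}$ is absorbed; a direct computation using $\Delta\rho_-^{-N}$ from \eqref{2.4} then expresses $\Delta u$ in terms of $v$, $dv$, $\Delta v$ and the quantity $\widetilde{\rho}_+\widetilde{\Delta}\widetilde{\rho}_+$, with everything transferred to the reverse metric $\widetilde{F}$ via Lemma~\ref{keylemma} exactly as in the Hardy case.

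Next I would expand $\int_{\Omega_1}(\Delta u)^2/\rho_-^{\beta}\,d\mathfrak{m}$, obtaining a leading term proportional to $\int v^2\widetilde{\rho}_+^{-2\gamma-4-\beta}(\text{something})^2$ — which after using the curvature bound $\widetilde{\mathbf K}\le k$, $|\widetilde{\mathbf S}|\le(n-1)h$ and the Laplacian comparison \eqref{2.5} should produce both the sharp constant $(n-4-\beta)^2(n+\beta)^2/16$ and the $D_{k,h}$-remainder term — plus cross terms involving $\langle\widetilde\nabla\widetilde\rho_+,dv^2\rangle$ and $\widetilde\rho_+^{-\text{power}}\,\widetilde\Delta(\pm v^2)$. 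The constraint $u\in C^\infty_{0,F,d\mathfrak m,\beta}(M)$, i.e. $G^\beta_{F,d\mathfrak m}(u)=0$, is precisely what is needed to kill (or control the sign of) the integral of these cross terms: one integrates by parts using \eqref{2.7''} and the definition of $\varrho_{u,\beta}$, and the vanishing of $G^\beta_{F,d\mathfrak m}(u)$ removes the indefinite contribution, so that the remaining integrated cross term is again a nonnegative multiple of $\int u^2\rho_u^{-\beta-4}D_{k,h}(\rho_u)\,d\mathfrak m$. On $\Omega_2$ one repeats the argument with $v=u\cdot\rho_+^{\gamma}$, and on $\Omega_3$ the claim is immediate; the hypotheses $-2<\beta<n-4$ guarantee $\gamma>0$ and that all exponents behave correctly (in particular $n+\beta>0$ and $n-4-\beta>0$, so the constant is positive).

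For the sharpness claim, assuming $k\le 0$, $h=0$, the $D_{k,0}$-term is nonnegative, so it suffices to show $\inf_u \int(\Delta u)^2\rho_u^{-\beta}\big/\int u^2\rho_u^{-\beta-4}\le (n-4-\beta)^2(n+\beta)^2/16$. I would use the test functions $u_\epsilon(x)=\psi(x)\big[\max\{\epsilon,\rho_-(x)\}\big]^{-\gamma}$ with $\psi$ a cut-off supported in a backward ball $B_p^-(R)\subset M$ with $R<\widetilde{\mathfrak i}_p$ and $\psi\equiv 1$ on $B_p^-(r)$, exactly as in Step~2 of Theorem~\ref{Hardy1}; note $u_\epsilon$ is nonnegative and radial, so by Proposition~\ref{nonempty} it lies in the admissible class (after smoothing $\max\{\epsilon,\cdot\}$, or working with the Lipschitz version and approximating). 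One then computes $\Delta u_\epsilon$ on $B_p^-(r)\setminus B_p^-(\epsilon)$ using the transfer to $(M,\widetilde F)$, the co-area formula \eqref{1.3} and the volume comparison \eqref{2.9}–\eqref{2.11}, and checks that the dominant integrals on numerator and denominator are both of order $\log(1/\epsilon)$ with ratio tending to $(n-4-\beta)^2(n+\beta)^2/16$, while the remaining ($\psi$-derivative) terms stay bounded as $\epsilon\to0^+$.

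The main obstacle I anticipate is the bookkeeping of the cross terms in the expansion of $(\Delta u)^2$: unlike the Hardy case, squaring the Laplacian produces several terms each needing an integration by parts, and one must verify carefully that the precise definition of $G^\beta_{F,d\mathfrak m}(u)$ (with $2\di(u\nabla u)$ rather than $\Delta(u^2)$, reflecting the nonreversibility) is exactly the combination that appears, so that the constraint $G^\beta_{F,d\mathfrak m}(u)=0$ does its job on each of $\Omega_1,\Omega_2$ simultaneously. A secondary technical point is justifying the integrations by parts despite $\rho_-$ being only Lipschitz and nonsmooth across the cut locus of $p$ in $(M,\widetilde F)$; this is handled, as in \cite{Ot} and the earlier proofs, by working with the distributional Laplacian \eqref{2.7''} and using that $\mathrm{Cut}_p$ is closed with null measure.
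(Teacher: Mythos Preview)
Your Step~1 strategy has a genuine gap. The substitution $u=v\,\rho_-^{-\gamma}$ followed by a ``direct computation'' expressing $\Delta u$ in terms of $v$, $dv$, $\Delta v$ and $\widetilde\rho_+\widetilde\Delta\widetilde\rho_+$ presupposes a Leibniz-type product rule for the Finsler Laplacian, and no such rule holds: the Legendre transform $\mathfrak L^{-1}$ is only positively $1$-homogeneous, so $\nabla(v\,\rho_-^{-\gamma})\neq \rho_-^{-\gamma}\nabla v+v\,\nabla(\rho_-^{-\gamma})$ in general, and $\Delta(v\,\rho_-^{-\gamma})$ simply does not decompose. (In the Hardy proof the analogous step works because one bounds $F^{*2}(du)$ via the \emph{inequality} \eqref{3.1111}, not an identity; there is no comparable convexity trick that produces a useful lower bound for $(\Delta u)^2$.) A second, related issue is your plan to prove the inequality on each $\Omega_i$ separately: the constraint $G^\beta_{F,d\mathfrak m}(u)=0$ is a single global integral condition and cannot be split across $\Omega_1$ and $\Omega_2$.

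The paper avoids expanding $\Delta u$ altogether. The $\Omega_i$-decomposition is used only to obtain the pointwise estimate $-\varrho_{u,\beta}(x)\le(-2-\beta)\rho_u^{-4-\beta}\bigl[2\gamma+(n-1)D_{k,h}(\rho_u)\bigr]$ from the Laplacian comparison. One then uses the (genuinely Finslerian) identity $\di(u\nabla u)=F^2(\nabla u)+u\,\Delta u$ together with $G^\beta_{F,d\mathfrak m}(u)=0$ to get
\[
-\int_M\frac{u\,\Delta u}{\rho_u^{\beta+2}}\,d\mathfrak m\ \ge\ \frac{\beta+2}{2}\int_M\frac{u^2}{\rho_u^{\beta+4}}\bigl[2\gamma+(n-1)D_{k,h}(\rho_u)\bigr]\,d\mathfrak m+\int_M\frac{F^2(\nabla u)}{\rho_u^{\beta+2}}\,d\mathfrak m,
\]
applies Theorem~\ref{Hardy1} (with weight exponent $\beta+2$) to the last term, and closes with the H\"older inequality $-\int u\,\Delta u/\rho_u^{\beta+2}\le\bigl(\int(\Delta u)^2/\rho_u^{\beta}\bigr)^{1/2}\bigl(\int u^2/\rho_u^{\beta+4}\bigr)^{1/2}$. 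Your sharpness argument, by contrast, is essentially the paper's: same radial test functions $u=\psi\cdot[\max\{\epsilon,\rho_-\}]^{-\gamma}$ (admissible by Proposition~\ref{nonempty}), same transfer to $(M,\widetilde F)$, and the two-sided Laplacian comparison on a compact ball to control the numerator.
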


\begin{proof}
\noindent \textbf{Step 1.} Set $\Omega_1:=\{x\in M:\, u(x)>0\}$, $\Omega_2:=\{x\in M: \,u(x)<0\}$ and $\Omega_3:=\{x\in M:\, u(x)=0\}$.  Obviously, one has
\begin{align*}
-\varrho_{u,\beta}(x)=\left\{
\begin{array}{lll}
& \Delta(\rho^{-\beta-2}_-)(x), & \ \ \ \text{if } x\in \Omega_1, \\
\\
& -\Delta(-\rho^{-\beta-2}_+)(x)=\widetilde{\Delta}(\widetilde{\rho}^{-\beta-2}_-)(x), & \ \ \ \text{if }  x\in \Omega_2.\\
\end{array}
\right.
\end{align*}
Set $\gamma=\frac{n-4-\beta}{2}>0$. Then Lemma \ref{keylemma} together with (\ref{2.5}), (\ref{2.4}) and (\ref{1.2}) yields
\[
-\varrho_{u,\beta}(x)\leq  (-2-\beta)\cdot{\rho}^{-4-\beta}_u(x)\cdot\left[ 2\gamma+(n-1)\cdot D_{k,h}(\rho_u(x)) \right], \ x\in \Omega_1\cup \Omega_2.\tag{4.2}\label{*}
\]
Hence, given $u\in C^\infty_{0,F,d\mathfrak{m},\beta}(M)$, we have
\begin{align*}
&\int_M u^2(x)\cdot\varrho_{u,\beta}(x)d\mathfrak{m}(x)\\
\geq &(2+\beta)\int_M\frac{u^2(x)}{{\rho}^{4+\beta}_u(x)}\left[ 2\gamma+(n-1)\cdot D_{k,h}(\rho_u(x)) \right]d\mathfrak{m}(x).\tag{4.3}\label{3.1}
\end{align*}
Clearly,
\begin{align*}
&\int_M 2\rho^{-\beta-2}_u(x)\cdot\di(u \nabla u)d\mathfrak{m}(x)\\
=&2\int_M \rho^{-\beta-2}_u(x)\cdot F^2(\nabla u)d\mathfrak{m}(x)+2\int_M\rho^{-\beta-2}_u(x)\cdot u\Delta u d\mathfrak{m}(x),
\end{align*}
which together with (\ref{3.1}) and $G^\beta_{F,d\mathfrak{m}}(u)=0$ yields that
\begin{align*}
-\int_M\frac{u\Delta u}{\rho^{\beta+2}_u(x)} d\mathfrak{m}(x)\geq& \left(\frac{2+\beta}{2}\right)\int_M\frac{u^2(x)}{{\rho}^{4+\beta}_u(x)}\cdot\left[ 2\gamma+(n-1)\cdot D_{k,h}(\rho_u(x)) \right]d\mathfrak{m}(x)\\
&+\int_M \frac{F^2(\nabla u)}{\rho^{\beta+2}_u(x)}d\mathfrak{m}(x).\tag{4.4}\label{3.2}
\end{align*}
Using Theorem \ref{Hardy1}, we have
\begin{align*}
\int_M  \frac{F^2(\nabla u)}{\rho^{\beta+2}_u(x)}d\mathfrak{m}(x)& \geq\gamma^2\int_M \frac{u^2(x)}{{\rho^{4+\beta}_u}(x)}d\mathfrak{m}(x)\\
&+(n-1)\gamma\int_M  \frac{u^2(x)}{{\rho^{4+\beta}_u}(x)}\cdot D_{k,h}(\rho_u(x))d\mathfrak{m}(x),
\end{align*}
which together with (\ref{3.2}) furnishes that
\begin{align*}
-\int_M\frac{u\Delta u}{\rho^{\beta+2}_u(x)} d\mathfrak{m}(x)\geq\frac{\gamma(n+\beta)}{2} A+\frac{(n-1)(n-2)}{2} B,\tag{4.5}\label{3.3}
\end{align*}
where
\begin{align*}
A:=\int_M \frac{u^2(x)}{{\rho^{4+\beta}_u}(x)}d\mathfrak{m}(x),\ B:=\int_M  \frac{u^2(x)}{{\rho^{4+\beta}_u}(x)}\cdot D_{k,h}(\rho_u(x))d\mathfrak{m}(x).
\end{align*}
By the H\"older inequality, we get
\begin{align*}
-\int_M\frac{u\Delta u}{\rho^{\beta+2}_u(x)} d\mathfrak{m}(x)\leq \int_M \frac{|u \Delta u|}{\rho^{\beta+2}_u(x)}d\mathfrak{m}(x)
\leq  \left( \int_M\frac{(\Delta u)^2}{\rho^\beta_u(x)} d\mathfrak{m}(x)\right)^\frac12\cdot A^\frac12,
\end{align*}
which together with (\ref{3.3}) furnishes
\begin{align*}
\int_M\frac{(\Delta u)^2}{\rho^\beta_u(x)} d\mathfrak{m}(x)\geq \left(\frac{\gamma(n+\beta)}{2}\right)^2 A+\frac{\gamma(n+\beta)(n-1)(n-2)}{2} B.
\end{align*}

\noindent \textbf{Step 2.} By the properties of $D_{k,h}$, it is easy to see that
\[
\int_M  \frac{u^2(x)}{{\rho^{4+\beta}_u}(x)}\cdot D_{k,h}(\rho_u(x))d\mathfrak{m}(x)\geq0
\]
when $k\leq 0$ and $h=0$.
Now we show that $\delta:=\frac{(n-4-\beta)^2(n+\beta)^2}{16}$ is sharp in this case. Obviously, it suffices to show that
\[
\delta=\inf_{u\in C^\infty_{0,F,d\mathfrak{m},\beta}(M)\backslash\{0\}}\frac{\int_M\frac{(\Delta u)^2}{\rho^\beta_u(x)} d\mathfrak{m}(x)}{\int_M \frac{u^2(x)}{{\rho^{4+\beta}_u}(x)}d\mathfrak{m}(x)}.
\]

Given $0<\epsilon<r<R<\widetilde{\mathfrak{i}}_p$, choose a cut-off function $h(t):[0,+\infty]\rightarrow [0,1]$ with $h|_{[0,r]}\equiv 1$, $h_{[R,+\infty)}\equiv0$ and $h'(t)\leq 0$. Now set $\psi(x):=h\circ\rho_-(x)$. Then $\psi|_{B^-_p(r)}\equiv1$ and $\text{supp}(\psi)\subset\overline{B^-_p(R)}$. Define $u_\epsilon(x):=\left[\max\{\epsilon,\rho_-(x)\}  \right]^{-\gamma}$. We set $u(x):=\psi(x)\cdot u_\epsilon(x)$. Clearly, Proposition \ref{nonempty} yields $G^\beta_{F,d\mathfrak{m}}(u)=0$.

Then we have
\begin{align*}
I_1(\epsilon)&=\int_M\frac{(\Delta u (x))^2}{\rho^\beta_u(x)}d\mathfrak{m}(x)\\
&=\int_{B^-_p(r)\backslash B^-_p(\epsilon)}\frac{(\Delta\rho^{-\gamma}_-(x))^2}{\rho^\beta_-(x)}d\mathfrak{m}(x)+\mathfrak{I}_2,\tag{4.6}\label{3.5}
\end{align*}
where
\[
\mathfrak{I}_2:=\int_{B^-_p(R)\backslash B^-_p(r)}\frac{(\Delta(\psi(x)\cdot \rho^{-\gamma}_-(x)))^2}{\rho^\beta_-(x)}d\mathfrak{m}(x)
\]
is finite and independent of $\epsilon$.
Since $\overline{B^-_p(r)}$ is compact, we can assume that $k_1\leq \mathbf{K}|_{\overline{B^-_p(r)}}\leq k\leq 0$. By the method used in Step 1 of Theorem \ref{Hardy1}, one can show that on $x\in \overline{B^-_p(r)}\backslash\widetilde{\text{Cut}}_p$,
\[
(n-1) \frac{\mathfrak{s}'_k(\widetilde{\rho}_+(x))}{\mathfrak{s}_k(\widetilde{\rho}_+(x))}  \leq \widetilde{\Delta}\widetilde{\rho}_+(x)\leq (n-1) \frac{\mathfrak{s}'_{k_1}(\widetilde{\rho}_+(x))}{\mathfrak{s}_{k_1}(\widetilde{\rho}_+(x))}.\tag{4.7}\label{4.7new}
\]
By (\ref{4.7new}), one gets
\begin{align*}
0\leq & -\gamma-1+ \widetilde{\rho}_+(x)\widetilde{\Delta}\widetilde{\rho}_+(x) \\
\leq& -\gamma-1+(n-1)D_{k_1,0}(\widetilde{\rho}_+(x))+(n-1)\\
\leq& -\gamma-1+{C}(n,k_1,r)+(n-1)\\
=&\frac{n+\beta}{2}+ {C}(n,k_1,r),\tag{4.8}\label{3.7}
\end{align*}
where ${C}(n,k_1,r)$ is a finite positive number only depending on $n$, $k_1$ and $r$ with $\lim_{r\rightarrow 0^+} {C}(n,k_1,r)=0$.
Hence, from (\ref{3.5}), (\ref{2.4}) and (\ref{3.7}), we obtain that
\[
I_1(\epsilon)\leq\gamma^2\left(\frac{n+\beta}{2}+C(n,k_1,r)\right)^2 \mathfrak{I}_1+\mathfrak{I}_2,\tag{4.9}\label{3.8}
\]
where
\begin{align*}
\mathfrak{I}_1:=\int_{{B}^-_p(r)\backslash {B}^-_p(\epsilon)} \widetilde{\rho}^{-n}_+(x)d\mathfrak{m}(x)=\int_{\widetilde{B}^+_p(r)\backslash \widetilde{B}^+_p(\epsilon)} \widetilde{\rho}^{-n}_+(x)d\mathfrak{m}(x).
\end{align*}
In particular, (\ref{2.8}) and (\ref{2.11}) imply that
\[
\lim_{\epsilon\rightarrow 0^+}\mathfrak{I}_1= +\infty.
\]
On the other hand, we have
\begin{align*}
I_2(\epsilon):=&\int_M \frac{u^2(x)}{{\rho^{4+\beta}_u}(x)}d\mathfrak{m}(x) \geq \int_{B^-_p(r)\backslash B^-_p(\epsilon)} \frac{(\psi u_\epsilon)^2(x)}{{\rho^{4+\beta}_-}(x)}d\mathfrak{m}(x)\\
=&\int_{B^-_p(r)\backslash B^-_p(\epsilon)} \frac{\rho^{-2\gamma}_-(x)}{{\rho^{4+\beta}_-}(x)}d\mathfrak{m}(x)=\mathfrak{I}_1.\tag{4.10}\label{3.9}
\end{align*}
Therefore, from (\ref{3.8}) and (\ref{3.9}), we obtain that
\begin{align*}
\delta&\leq\inf_{u\in C^\infty_{0,F,d\mathfrak{m},\beta}(M)\backslash\{0\}}\frac{\int_M\frac{(\Delta u)^2}{\rho^\beta_-(x)} d\mathfrak{m}(x)}{\int_M \frac{u^2(x)}{{\rho^{4+\beta}_-}(x)}d\mathfrak{m}(x)}\leq \lim_{\epsilon\rightarrow 0^+}\frac{I_1(\epsilon)}{I_2(\epsilon)}\\
&\leq \lim_{\epsilon\rightarrow 0^+}\frac{\gamma^2\left(\frac{n+\beta}{2}+{C}(n,k_1,r)\right)^2 \mathfrak{I}_1+\mathfrak{I}_2}{\mathfrak{I}_1}\\
&=\gamma^2\left(\frac{n+\beta}{2}+{C}(n,k_1,r)\right)^2\rightarrow \delta, \text{ as }r\rightarrow 0^+.
\end{align*}
Since $r$ is arbitrary, we are done.\end{proof}

\begin{remark}Under the same assumption as in Theorem \ref{Rll1}, one can obtain the following inequality by
an  argument similar to that of \cite[Theorem 3.2]{KR}
\begin{align*}
\int_M\frac{(\Delta u)^2}{\rho^\beta_u(x)} d\mathfrak{m}(x)&\geq \frac{(n+\beta)^2}{4}\int_M \frac{F^2(\nabla u)}{\rho^{\beta+2}_u(x)}d\mathfrak{m}(x)\\
& +\frac{(n-1)(n+\beta)(n-4-\beta)^2}{8}\int_M\frac{u^2(x)}{{\rho}^{\beta+4}_u(x)} D_{k,h}(\rho_u(x))d\mathfrak{m}(x),
\end{align*}
where the constant $\frac{(n+\beta)^2}{4}$ is sharp if $k\leq 0$ and $h=0$.  Moreover, one can show that this inequality is equivalent to the Rellich inequality in Theorem \ref{Rll1} by employing Theorem \ref{Hardy1}.
\end{remark}

Similar to Theorem \ref{Hardy2}, we can refine the Rellich inequality by adding terms like the Brezis-V\'azquez improvement. Before doing this, we need the following inequality, which is inspired by \cite[Lemma 4.5]{YSK}.

\begin{lemma}\label{de1}
Let $(M,F,d\mathfrak{m})$ be an $n$-dimensional  complete Finsler
manifold or an open domain containing $p$ with
 $\mathbf{K}\leq k<0$, $\mathbf{S}= 0$ and finite uniformity constant $\Lambda_F<\infty$.
 Thus, for any $\beta\in \mathbb{R}$ with $-2\leq \beta<n-4$ and any $u\in C^\infty_{0,F,d\mathfrak{m},\beta}(M)$, we have
\begin{align*}
&\int_M\rho^{-\beta}_u(x)\cdot \left[ \Delta u+\frac{(n+\beta)(n-\beta-4)}{4}\frac{u(x)}{\rho^2_u(x)}  \right]^2 d\mathfrak{m}(x)\\
\leq &\int_M \frac{(\Delta u)^2}{\rho^{\beta}_u(x)}d\mathfrak{m}(x)-\frac{(n+\beta)^2(n-\beta-4)^2}{16}\int_M\frac{u^2(x)}{\rho^{\beta+4}_u(x)}d\mathfrak{m}(x)\\
&-\frac{(n+\beta)(n-\beta-4)(n-1)(n-2)}{4}\int_M\frac{u^2(x)}{\rho_u^{\beta+4}(x)} D_{k,0}(\rho_u)d\mathfrak{m}(x)\\
&-\frac{(n+\beta)(n-\beta-4)}{2\Lambda_F}C(n,\Lambda_F,k)\int_M\frac{u^2(x)}{\rho^{\beta+2}_u(x)}d\mathfrak{m}(x),
\end{align*}
where $C(n,\Lambda_F,k)$ is a positive constant only depending on $\Lambda_F$ and $k$.

\end{lemma}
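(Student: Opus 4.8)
The plan is to expand the square on the left-hand side, recognise the resulting cross term as a multiple of $-\int_M\rho_u^{-\beta-2}u\Delta u\,d\mathfrak{m}$, and then bound that quantity from below exactly as in Step~1 of the proof of Theorem~\ref{Rll1}, with the single modification that the refined Hardy inequality of Theorem~\ref{Hardy2} is invoked in place of Theorem~\ref{Hardy1}; this is precisely what produces the extra $\Lambda_F^{-1}$-term.

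Concretely, I would first expand
\[
\int_M\frac{1}{\rho_u^\beta}\left[\Delta u+\frac{(n+\beta)(n-\beta-4)}{4}\frac{u}{\rho_u^2}\right]^2d\mathfrak{m}=\int_M\frac{(\Delta u)^2}{\rho_u^\beta}d\mathfrak{m}+\frac{(n+\beta)(n-\beta-4)}{2}\int_M\frac{u\Delta u}{\rho_u^{\beta+2}}d\mathfrak{m}+\frac{(n+\beta)^2(n-\beta-4)^2}{16}\int_M\frac{u^2}{\rho_u^{\beta+4}}d\mathfrak{m}.
\]
Since $-2\le\beta<n-4$ forces $n\ge3$ and hence $(n+\beta)(n-\beta-4)>0$, after cancelling the terms $\int_M(\Delta u)^2/\rho_u^\beta\,d\mathfrak{m}$ on both sides and dividing by the positive number $\tfrac12(n+\beta)(n-\beta-4)$, the asserted inequality becomes equivalent to
\[
-\int_M\frac{u\Delta u}{\rho_u^{\beta+2}}d\mathfrak{m}\ \ge\ \frac{\gamma(n+\beta)}{2}\int_M\frac{u^2}{\rho_u^{\beta+4}}d\mathfrak{m}+\frac{(n-1)(n-2)}{2}\int_M\frac{u^2}{\rho_u^{\beta+4}}D_{k,0}(\rho_u)\,d\mathfrak{m}+\frac{C(n,\Lambda_F,k)}{\Lambda_F}\int_M\frac{u^2}{\rho_u^{\beta+2}}d\mathfrak{m},
\]
where $\gamma:=\tfrac{n-4-\beta}{2}$; that is, inequality $(\ref{3.3})$ with $h=0$ augmented by the Brezis--V\'azquez term.

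To prove this last inequality I would run Step~1 of the proof of Theorem~\ref{Rll1} up to formula $(\ref{3.2})$: combining $G^\beta_{F,d\mathfrak{m}}(u)=0$ with the pointwise estimate $(\ref{*})$ (valid for all $\beta\ge-2$, and an equality when $\beta=-2$, in which case $G^{-2}_{F,d\mathfrak{m}}(u)=0$ for every $u\in C^\infty_0(M)$ anyway) and with $\di(u\nabla u)=F^2(\nabla u)+u\Delta u$ gives
\[
-\int_M\frac{u\Delta u}{\rho_u^{\beta+2}}d\mathfrak{m}\ \ge\ \frac{2+\beta}{2}\int_M\frac{u^2}{\rho_u^{\beta+4}}\bigl[2\gamma+(n-1)D_{k,0}(\rho_u)\bigr]d\mathfrak{m}+\int_M\frac{F^2(\nabla u)}{\rho_u^{\beta+2}}d\mathfrak{m}.
\]
Then, instead of Theorem~\ref{Hardy1}, I apply Theorem~\ref{Hardy2} with the exponent $\beta+2$ in place of $\beta$ — admissible since $\beta+2<n-2$, since $\mathbf{K}\le k<0$, $\mathbf{S}=0$, $\Lambda_F<\infty$ by hypothesis, and since $u\in C^\infty_0(M)$ — to obtain
\[
\int_M\frac{F^2(\nabla u)}{\rho_u^{\beta+2}}d\mathfrak{m}\ \ge\ \gamma^2\int_M\frac{u^2}{\rho_u^{\beta+4}}d\mathfrak{m}+(n-1)\gamma\int_M\frac{u^2}{\rho_u^{\beta+4}}D_{k,0}(\rho_u)\,d\mathfrak{m}+\frac{C(n,\Lambda_F,k)}{\Lambda_F}\int_M\frac{u^2}{\rho_u^{\beta+2}}d\mathfrak{m}.
\]
Adding the two displays and using the elementary identities $\gamma+\tfrac{2+\beta}{2}=\tfrac{n-2}{2}$ and $\gamma+2+\beta=\tfrac{n+\beta}{2}$ to collect the coefficients of $\int_M u^2\rho_u^{-\beta-4}\,d\mathfrak{m}$ and of $\int_M u^2D_{k,0}(\rho_u)\rho_u^{-\beta-4}\,d\mathfrak{m}$ produces precisely the equivalent inequality displayed above; substituting it back into the expansion of the square then finishes the proof.

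The argument is essentially bookkeeping, and I do not expect a serious obstacle once Theorem~\ref{Hardy2} is in hand. The points that genuinely need care are: the strict positivity of $(n+\beta)(n-\beta-4)$, so that dividing through preserves the direction of the inequality; the borderline case $\beta=-2$, where the $\varrho_{u,\beta}$-contribution vanishes identically; and the regularity bookkeeping for $\Delta u$ and the radial weights away from $\text{Cut}_p$ and from $\{du=0\}$, which is handled exactly as in the proof of Theorem~\ref{Rll1}.
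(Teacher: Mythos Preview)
Your proposal is correct and follows essentially the same route as the paper: both expand the square, reduce the claim to the cross-term estimate $-\int_M\rho_u^{-\beta-2}u\Delta u\,d\mathfrak{m}\ge\frac{\gamma(n+\beta)}{2}A+\frac{(n-1)(n-2)}{2}B+\frac{C}{\Lambda_F}\int_M u^2\rho_u^{-\beta-2}d\mathfrak{m}$, and obtain the latter by combining $G^\beta_{F,d\mathfrak{m}}(u)=0$, the pointwise bound (\ref{*}), and Theorem~\ref{Hardy2} applied with exponent $\beta+2$. The only difference is presentational (the paper derives the cross-term bound (4.11) first and then expands the square, whereas you reverse the order), and your extra remarks on the borderline $\beta=-2$ are correct but not needed for the argument.
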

\begin{proof}
Given $u\in C^\infty_{0,F,d\mathfrak{m},\beta}(M)$, Theorem \ref{Hardy2} together with (\ref{*}) yields
\begin{align*}
&\int_M\frac{u\Delta u}{\rho^{\beta+2}_u(x)}d\mathfrak{m}(x)\\
=&\int_M\frac{\di(u\nabla u)}{\rho^{\beta+2}_u(x)}d\mathfrak{m}(x)-\int_M\frac{F^2(\nabla u)}{\rho^{\beta+2}_u(x)}d\mathfrak{m}(x)\\
=&-\frac12\int_M u^2(x)\cdot\varrho_{u,\beta}(x)\,d\mathfrak{m}(x)-\int_M\frac{F^2(\nabla u)}{\rho^{\beta+2}_u(x)}d\mathfrak{m}(x)\\
\leq&-\frac{(n-\beta-4)(n+\beta)}{4}\int_M \frac{u^2(x)}{{\rho}^{\beta+4}_u(x)}d\mathfrak{m}(x)-\frac{C(n,\Lambda_F,k)}{\Lambda_F}\int_M\frac{u^2(x)}{\rho^{\beta+2}_u(x)}d\mathfrak{m}(x)\\ &-\frac{(n-1)(n-2)}{2}\int_M\frac{u^2(x)}{\rho_u^{\beta+4}(x)} D_{k,0}(\rho_u)d\mathfrak{m}(x),\tag{4.11}\label{3.13}
\end{align*}
where $C(n,\Lambda_F,k)$ is a positive constant defined in Theorem \ref{Hardy2}.
Hence, we have
\begin{align*}
&\int_M\rho^{-\beta}_u(x)\cdot \left[ \Delta u+\frac{(n+\beta)(n-\beta-4)}{4}\frac{u(x)}{\rho^2_u(x)}  \right]^2 d\mathfrak{m}(x)\\
=&\int_M \frac{(\Delta u)^2}{\rho^{\beta}_u(x)}d\mathfrak{m}(x)+\frac{(n+\beta)^2(n-\beta-4)^2}{16}\int_M\frac{u^2(x)}{\rho^{\beta+4}_u(x)}d\mathfrak{m}(x)\\
&+\frac{(n+\beta)(n-\beta-4)}{2}\int_M\frac{u\Delta u}{\rho^{\beta+2}_u(x)}d\mathfrak{m}(x)
\end{align*}
\begin{align*}
\leq &\int_M \frac{(\Delta u)^2}{\rho^{\beta}_u(x)}d\mathfrak{m}(x)+\frac{(n+\beta)^2(n-\beta-4)^2}{16}\int_M\frac{u^2(x)}{\rho^{\beta+4}_u(x)}d\mathfrak{m}(x)+\frac{(n+\beta)(n-\beta-4)}{2}\\
&\times \left( -\frac{(n-\beta-4)(n+\beta)}{4}\int_M \frac{u^2(x)}{{\rho}^{\beta+4}_u(x)}d\mathfrak{m}(x)-\frac{(n-1)(n-2)}{2}\int_M\frac{u^2(x)}{\rho_u^{\beta+4}(x)} D_{k,0}(\rho_u)d\mathfrak{m}(x)\right.\\
&\left.-\frac{C(n,\Lambda_F,k)}{\Lambda_F}\int_M\frac{u^2(x)}{\rho^{\beta+2}_u(x)}d\mathfrak{m}(x)\right).
\end{align*}
Then the desired inequality follows.
\end{proof}

Now we have the following refined Rellich inequality. See \cite{KO,YSK} for more details in the Riemannian setting.
\begin{theorem}\label{complesRellich}
Let $(M,F,d\mathfrak{m})$ be an $n$-dimensional  complete Finsler
manifold or an open domain containing $p$ with
$\mathbf{K}\leq k<0$, $\mathbf{S}= 0$ and finite uniformity constant $\Lambda_F<\infty$.
Then there exists a positive constant $C=C(n,\Lambda_F,k)$ such that for any $\beta\in \mathbb{R}$ with $0\leq \beta<n-2$ and any $u\in C^\infty_{0,F,d\mathfrak{m},\beta}(M)$,
\begin{align*}
\int_M \frac{(\Delta u)^2}{\rho^{\beta}_u(x)}d\mathfrak{m}(x)\geq&\frac{(n+\beta)^2(n-\beta-4)^2}{16}\int_M\frac{u^2(x)}{\rho^{\beta+4}_u(x)}d\mathfrak{m}(x)\\
+&\frac{(n-1)(n-2)(n+\beta)(n-\beta-4)}{4}\int_M \frac{u^2(x)}{\rho^{\beta+4}_u(x)}D_{k,0}(\rho_u)d\mathfrak{m}(x)\\
+&\frac{(n-2-\beta)(n-2+\beta)C}{2\Lambda_F}\int_M\frac{u^2(x)}{\rho^{\beta+2}_u(x)}d\mathfrak{m}(x)\\
+&\frac{(n-1)(n-2)C}{\Lambda_F}\int_M \frac{u^2(x)}{\rho^{\beta+2}_u(x)}D_{k,0}(\rho_u)d\mathfrak{m}(x)+\frac{C^2}{\Lambda_F^2}\int_M\frac{u^2(x)}{\rho^{\beta}_u(x)}d\mathfrak{m}(x),
\end{align*}
where the constant $\frac{(n+\beta)^2(n-\beta-4)^2}{16}$ is sharp.
\end{theorem}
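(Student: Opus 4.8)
The plan is to squeeze one more Brezis--V\'azquez term out of Lemma~\ref{de1}, in the same way Theorem~\ref{Hardy2} improves Theorem~\ref{Hardy1}. As in the proofs of Theorem~\ref{Rll1} and Lemma~\ref{de1}, I would split $M=\Omega_1\cup\Omega_2\cup\Omega_3$ according to the sign of $u$ and argue on each piece, so that on $\Omega_1$ one has $\rho_u=\rho_-=\widetilde\rho_+$ and may pass to the reverse metric $\widetilde F$ via Lemma~\ref{keylemma} (and symmetrically on $\Omega_2$, trivially on $\Omega_3$); it suffices to prove the inequality on each piece and add.

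Write $c=\tfrac{(n+\beta)(n-\beta-4)}{4}$ and $w=\Delta u+c\,u/\rho_u^2$. On $\Omega_1$, Lemma~\ref{de1} already gives
\[
\int_{\Omega_1}\frac{(\Delta u)^2}{\rho^\beta_u}d\mathfrak{m}\ \ge\ c^2\!\int_{\Omega_1}\!\frac{u^2}{\rho^{\beta+4}_u}d\mathfrak{m}+c(n-1)(n-2)\!\int_{\Omega_1}\!\frac{u^2 D_{k,0}(\rho_u)}{\rho^{\beta+4}_u}d\mathfrak{m}+\frac{2cC}{\Lambda_F}\!\int_{\Omega_1}\!\frac{u^2}{\rho^{\beta+2}_u}d\mathfrak{m}+\int_{\Omega_1}\!\frac{w^2}{\rho^{\beta}_u}d\mathfrak{m},
\]
so the first three terms already furnish the principal constants $\tfrac{(n+\beta)^2(n-\beta-4)^2}{16}$, $\tfrac{(n-1)(n-2)(n+\beta)(n-\beta-4)}{4}$ and part of the coefficient of $\int u^2\rho_u^{-\beta-2}d\mathfrak{m}$. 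Everything therefore reduces to proving, on each $\Omega_i$,
\[
\int\frac{w^2}{\rho^\beta_u}d\mathfrak{m}\ \ge\ 2(1+\beta)\frac{C}{\Lambda_F}\!\int\!\frac{u^2}{\rho^{\beta+2}_u}d\mathfrak{m}+\frac{(n-1)(n-2)C}{\Lambda_F}\!\int\!\frac{u^2 D_{k,0}(\rho_u)}{\rho^{\beta+2}_u}d\mathfrak{m}+\frac{C^2}{\Lambda_F^2}\!\int\!\frac{u^2}{\rho^{\beta}_u}d\mathfrak{m},
\]
which is exactly the gap between $\tfrac{(n-2-\beta)(n-2+\beta)}{2}$ and $\tfrac{(n+\beta)(n-\beta-4)}{2}$ on the weight $\rho_u^{-\beta-2}$ plus the two genuinely new terms.

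To establish this I would reopen the proof of Lemma~\ref{de1}. The estimate~(\ref{3.13}) on $\int u\Delta u\,\rho_u^{-\beta-2}d\mathfrak{m}$ there combined $G^\beta_{F,d\mathfrak{m}}(u)=0$ with the pointwise bound~(\ref{*}) on $\varrho_{u,\beta}$ and a \emph{single} pass of Theorem~\ref{Hardy2}, applied with exponent $\beta+2$ to $\int F^2(\nabla u)\rho_u^{-\beta-2}d\mathfrak{m}$. The key new input is a \emph{second} application of the improved Hardy inequality: either one runs that Hardy step one stage short, keeping the gradient remainder $\tfrac1{\Lambda_F}\int F^2(\nabla v)\rho_u^{-(n-2)}d\mathfrak{m}$, $v=u\rho_u^{(n-4-\beta)/2}$, that appears in the proof of Theorem~\ref{Hardy2} before Lemma~\ref{volume} is invoked, and applies Theorem~\ref{Hardy2} to it again, or one applies Theorem~\ref{Hardy2} directly to $u$ with exponent $\beta$ (which already carries the weights $\rho_u^{-\beta-2}$ with a $D_{k,0}$-factor and $\rho_u^{-\beta}$ with a $\tfrac{C}{\Lambda_F}$-factor). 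Combined with the telescoping bookkeeping $(2+\beta)+\tfrac{n-4-\beta}{2}=\tfrac{n+\beta}{2}$, $\tfrac{2+\beta}{2}+\tfrac{n-4-\beta}{2}=\tfrac{n-2}{2}$ from Theorem~\ref{Rll1}, this upgrades the bound on $-\int u\,w\,\rho_u^{-\beta-2}d\mathfrak{m}$; feeding it into the Cauchy--Schwarz inequality $-\int u\,w\,\rho_u^{-\beta-2}d\mathfrak{m}\le\bigl(\int w^2\rho_u^{-\beta}d\mathfrak{m}\bigr)^{1/2}\bigl(\int u^2\rho_u^{-\beta-4}d\mathfrak{m}\bigr)^{1/2}$, squaring, and discarding the nonnegative cross terms yields the displayed lower bound for $\int w^2\rho_u^{-\beta}d\mathfrak{m}$. (For $\beta<n-4$ this is direct; the extra range $n-4\le\beta<n-2$ is then covered by a limiting argument in $\beta$.)

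The step I expect to be the main obstacle is precisely this iterated use of the improved Hardy inequality: the $C^2/\Lambda_F^2$ in the conclusion forces one to harvest the $\tfrac1{\Lambda_F}F^{*2}(\eta)$ gain of Theorem~\ref{ineq} \emph{twice}, so one must not collapse the gradient remainder via Lemma~\ref{volume} at the first pass, and one has to track all three weights $\rho_u^{-\beta},\rho_u^{-\beta-2},\rho_u^{-\beta-4}$ and the jump of $\rho_u$ across $\{u=0\}$ so that every discarded cross term stays nonnegative --- here exactly the hypotheses $k<0$ (so $D_{k,0}\ge0$) and $0\le\beta<n-2$ (so $(n-2)^2-\beta^2>0$) are used. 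The remaining part, sharpness of $\tfrac{(n+\beta)^2(n-\beta-4)^2}{16}$, is then routine: since every added remainder term is nonnegative, the inequality implies the plain Rellich inequality of Theorem~\ref{Rll1}, whose constant is already optimal; alternatively one checks directly with the cut-off family $u_\epsilon(x)=[\max\{\epsilon,\rho_-(x)\}]^{-(n-4-\beta)/2}$ from Step~2 of Theorem~\ref{Rll1} that the correction terms are of strictly lower order in $\epsilon$, so the corresponding Rayleigh quotient still tends to $\tfrac{(n+\beta)^2(n-\beta-4)^2}{16}$ as $\epsilon\to0^+$.
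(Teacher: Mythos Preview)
Your overall architecture is right: start from Lemma~\ref{de1}, set $w=\Delta u+c\,u/\rho_u^2$ with $c=\tfrac{(n+\beta)(n-\beta-4)}{4}$, and prove the residual lower bound
\[
\int_M\frac{w^2}{\rho_u^\beta}d\mathfrak{m}\ \ge\ \frac{2(1+\beta)C}{\Lambda_F}\int_M\frac{u^2}{\rho_u^{\beta+2}}d\mathfrak{m}+\frac{(n-1)(n-2)C}{\Lambda_F}\int_M\frac{u^2D_{k,0}(\rho_u)}{\rho_u^{\beta+2}}d\mathfrak{m}+\frac{C^2}{\Lambda_F^2}\int_M\frac{u^2}{\rho_u^\beta}d\mathfrak{m}.
\]
The gap is in how you propose to obtain this. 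Your plan is to bound $-\int uw\,\rho_u^{-\beta-2}d\mathfrak{m}$ from below, apply Cauchy--Schwarz against $\int u^2\rho_u^{-\beta-4}d\mathfrak{m}$, square, and discard cross terms. But the squared quotient produces terms like $\big(\int u^2\rho_u^{-\beta-2}\big)^2\big/\int u^2\rho_u^{-\beta-4}$, and Cauchy--Schwarz only gives $\big(\int u^2\rho_u^{-\beta-2}\big)^2\le \int u^2\rho_u^{-\beta-4}\cdot\int u^2\rho_u^{-\beta}$, which goes the \emph{wrong} way for the $\tfrac{C^2}{\Lambda_F^2}\int u^2\rho_u^{-\beta}$ term. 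Moreover, the coefficient $\tfrac{2(1+\beta)C}{\Lambda_F}$ in front of $\int u^2\rho_u^{-\beta-2}$ simply does not arise from squaring any plausible lower bound on $-\int uw\,\rho_u^{-\beta-2}$. So ``discarding nonnegative cross terms'' cannot yield the displayed bound.

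What the paper actually does is different in two essential respects. First, it works with the weight $\rho_u^{-\beta}$, not $\rho_u^{-\beta-2}$: it reruns the computation behind (\ref{3.13}) one notch up (using $G^\beta=0$, the pointwise estimate on $\varrho_{u,\beta-2}$, and Theorem~\ref{Hardy2} with exponent $\beta$ rather than $\beta+2$) to obtain
\[
-\int_M\frac{u\,w}{\rho_u^\beta}d\mathfrak{m}\ \ge\ (1+\beta)\int_M\frac{u^2}{\rho_u^{\beta+2}}d\mathfrak{m}+\frac{(n-1)(n-2)}{2}\int_M\frac{u^2D_{k,0}(\rho_u)}{\rho_u^{\beta+2}}d\mathfrak{m}+\frac{C}{\Lambda_F}\int_M\frac{u^2}{\rho_u^\beta}d\mathfrak{m}.
\]
Second, it does \emph{not} use Cauchy--Schwarz but Young's inequality $ab\le\frac{\epsilon}{2}a^2+\frac{1}{2\epsilon}b^2$ with the specific choice $\epsilon=C/\Lambda_F$, so that the $\int u^2\rho_u^{-\beta}$ terms on both sides combine cleanly. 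This immediately gives the displayed residual bound and, after inserting Lemma~\ref{de1} on the $\int w^2\rho_u^{-\beta}$ side, the theorem. Your idea of a ``second application of improved Hardy with exponent $\beta$'' is exactly the ingredient needed, but it has to be paired with the right weight on $-\int uw$ and with Young (not Cauchy--Schwarz); the ``limiting argument in $\beta$'' for $n-4\le\beta<n-2$ is also not how the paper proceeds.
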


\begin{proof}From (\ref{3.13}), we  obtain that
\begin{align*}
&(\beta+1)\int_M\frac{u^2(x)}{\rho^{2+\beta}_u(x)}d\mathfrak{m}(x)+\frac{C(n,\Lambda_F,k)}{\Lambda_F}\int_M\frac{u^2(x)}{\rho^{\beta}_u(x)}d\mathfrak{m}(x)\\
&+\frac{(n-1)(n-2)}{2}\int_M \frac{u^2(x)}{\rho_u^{\beta+2}(x)}D_{k,0}(\rho_u)d\mathfrak{m}(x)\\
\leq &\int_M\frac{u(x)}{\rho^\beta_u(x)}\left( -\Delta u(x)-\frac{(n+\beta)(n-\beta-4)}{4}\frac{u(x)}{\rho^2_u(x)} \right)d\mathfrak{m}(x).\tag{4.12}\label{3.14}
\end{align*}
The inequalities $ab\leq\frac{\epsilon}{2}a^2+(2\epsilon)^{-1}b^2$ then yields
\begin{align*}
&\int_M\frac{u(x)}{\rho^\beta_u(x)}\left( -\Delta u(x)-\frac{(n+\beta)(n-\beta-4)}{4}\frac{u(x)}{\rho^2_u(x)} \right)d\mathfrak{m}(x)\\
\leq &\frac{\epsilon}{2}\int_M\frac{u^2(x)}{\rho^\beta_u(x)}d\mathfrak{m}(x)+\frac{1}{2\epsilon}\int_M\rho^{-\beta}_u(x)\cdot \left[ \Delta u(x)+\frac{(n+\beta)(n-\beta-4)}{4}\frac{u(x)}{\rho^2_u(x)}  \right]^2 d\mathfrak{m}(x),
\end{align*}
which together with (\ref{3.14}) implies that
\begin{align*}
&2\epsilon(\beta+1)\int_M\frac{u^2(x)}{\rho^{2+\beta}_u(x)}d\mathfrak{m}(x)+\left( \frac{2\epsilon C(n,\Lambda_F,k)}{\Lambda_F}-\epsilon^2\right)\int_M\frac{u^2(x)}{\rho^{\beta}_u(x)}d\mathfrak{m}(x)\\
&+\epsilon(n-1)(n-2)\int_M \frac{u^2(x)}{\rho_u^{\beta+2}(x)}D_{k,0}(\rho_u)d\mathfrak{m}(x)\\
\leq &\int_M\rho^{-\beta}_u(x)\cdot \left[ \Delta u+\frac{(n+\beta)(n-\beta-4)}{4}\frac{u(x)}{\rho^2_u(x)}  \right]^2 d\mathfrak{m}(x).
\end{align*}
By letting $\epsilon=C(n,\Lambda_F,k)/\Lambda_F$ and Lemma \ref{de1}, we obtain that
\begin{align*}
&2\frac{C(n,\Lambda_F,k)(\beta+1)}{\Lambda_F}\int_M\frac{u^2(x)}{\rho^{2+\beta}_u(x)}d\mathfrak{m}(x)+ \frac{C^2(n,\Lambda_F,k)}{\Lambda_F^2}\int_M\frac{u^2(x)}{\rho^{\beta}_u(x)}d\mathfrak{m}(x)\\
&+\frac{(n-1)(n-2)C(n,\Lambda_F,k)}{\Lambda_F}\int_M \frac{u^2(x)}{\rho_u^{\beta+2}(x)}D_{k,0}(\rho_u)d\mathfrak{m}(x)\\
\leq &\int_M\rho^{-\beta}_u(x)\cdot \left[ \Delta u+\frac{(n+\beta)(n-\beta-4)}{4}\frac{u(x)}{\rho^2_u(x)}  \right]^2 d\mathfrak{m}(x)\\
\leq &\int_M \frac{(\Delta u)^2}{\rho^{\beta}_u(x)}d\mathfrak{m}(x)-\frac{(n+\beta)^2(n-\beta-4)^2}{16}\int_M\frac{u^2(x)}{\rho^{4+\beta}_u(x)}d\mathfrak{m}(x)\\
&-\frac{(n+\beta)(n-\beta-4)(n-1)(n-2)}{4}\int_M \frac{u^2(x)}{\rho^{\beta+4}_u(x)}D_{k,0}(\rho_u)d\mathfrak{m}(x)\\
&-\frac{(n+\beta)(n-\beta-4)}{2\Lambda_F}C(n,\Lambda_F,k)\int_M\frac{u^2(x)}{\rho^{2+\beta}_u(x)}d\mathfrak{m}(x).
\end{align*}
\end{proof}

\begin{proof}[Proof of Theorem \ref{Th22}]It is not hard to see that the first consequence of Theorem \ref{Th22} follows from Theorem \ref{Rll1} while the second one is exactly Theorem \ref{complesRellich}.

\end{proof}

\section{Example}
 The aim of this section is to show the Hardy inequality in Theorem \ref{Hardy1} holds on the Randers space $(\mathbb{R}^n, F)$, where $F(x,y)=|y|+t\cdot y^n$, $y=(y^i)=y^i\frac{\partial}{\partial x^i}$ and $t\in [0,1)$.
Here, $(x^i)$ is the standard Euclidean coordinate system for $\mathbb{R}^n$.

It is not hard to see that $F$ is Berwalden (locally Minkowski) and hence,
$\mathbf{K}\equiv0$, $\mathbf{S}_{BH}\equiv0$, $\mathbf{S}_{HT}\equiv0$ and $\mathfrak{i}(M)=+\infty$, where $\mathbf{S}_{BH}$ (resp., $\mathbf{S}_{HT}$) is the S-curvature of the Busemann-Hausdorff (resp., the Holmes-Thompson) measure.
In particular, a geodesic in $(\mathbb{R}^n, F)$ is a straight line. Hence, a direct calculation yields
\[
\rho_+(x):=d_F(0,x)=|x|+t\cdot x^n,\ \rho_-(x):=d_F(0,x)=|x|-t\cdot x^n.\tag{5.1}\label{5.1}
\]
According to \cite[Example 3.2.1]{Sh1},
one can easily check that
\begin{align*}
F(\nabla \rho_+)=F^*(d \rho_+)=1, \ F(\nabla (-\rho_-))={F^*}(d(-\rho_-))=1.\tag{5.2}\label{new5.3}
\end{align*}

Now we show the following Hardy inequality by direct calculation.
\begin{proposition}
\[
\int_M  \frac{F^{2}(\nabla u(x))}{\rho^\beta_u(x)}d\mathfrak{m}(x)\geq\left(\frac{n-2-\beta}{2} \right)^2\int_M \frac{u^2(x)}{{\rho^{2+\beta}_u}(x)}d\mathfrak{m}(x),\ \forall\,u\in C^\infty_0(M).\tag{5.3}\label{5.3}
\]
where the constant $\left(\frac{n-2-\beta}{2} \right)^2$ is sharp. Here, $d\mathfrak{m}$ is either the Busemann-Hausdorff measure $d\mathfrak{m}_{BH}$ or the Holmes-Thompson measure $d\mathfrak{m}_{HT}$.
\end{proposition}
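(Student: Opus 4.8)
The plan is to carry out, for this explicit space, the scheme of Theorem~\ref{Hardy1} with $k=h=0$; all the comparison estimates used there become \emph{equalities} here because $(\mathbb R^n,F)$ is a flat (locally Minkowski) Berwald space with $\mathbf S\equiv0$ and $\mathfrak i(M)=+\infty$, so \eqref{5.3} is in fact the special case $k=h=0$ of that theorem, but below I indicate the self-contained computation. First note that $F=\alpha+\beta$ with $\alpha$ the Euclidean norm and $\beta=t\,dx^n$, so $\|\beta\|_\alpha=t$ is \emph{constant}; by \eqref{1.3**} we have $d\mathfrak m_{BH}=(1-t^2)^{(n+1)/2}\,dx$ and $d\mathfrak m_{HT}=dx$, both a positive constant multiple of Lebesgue measure, so the constant cancels in \eqref{5.3} and it suffices to take $d\mathfrak m=dx$. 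Split $M=\Omega_1\cup\Omega_2\cup\Omega_3$ with $\Omega_1=\{u>0\}$, $\Omega_2=\{u<0\}$, $\Omega_3=\{u=0\}$: on $\Omega_3$ one has $u=0$ and $\nabla u=0$ a.e., so both sides of \eqref{5.3} vanish there, and it remains to prove the inequality over $\Omega_1$ and over $\Omega_2$ separately.

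On $\Omega_1$ we have $\rho_u=\rho_-$ with $\rho_-(x)=|x|-tx^n$ by \eqref{5.1}. Put $\gamma:=\tfrac{n-2-\beta}{2}$, which is \emph{positive} by the hypothesis $\beta<n-2$, and $v:=u\rho_-^{\gamma}>0$, so that $du=\gamma v\rho_-^{-\gamma-1}(-d\rho_-)+\rho_-^{-\gamma}dv$. Applying the Cauchy--Schwarz inequality \eqref{3.1111} with $\xi=\gamma v\rho_-^{-\gamma-1}(-d\rho_-)$ and $\eta=du$, and using $F^*(d(-\rho_-))=1$ from \eqref{new5.3}, gives a.e.\ on $\Omega_1$
\[
F^2(\nabla u)=F^{*2}(du)\ \ge\ \gamma^2 v^2\rho_-^{-2\gamma-2}+\gamma\,\rho_-^{-2\gamma-1}\,\langle\nabla(-\rho_-),d(v^2)\rangle .
\]
Dividing by $\rho_-^{\beta}$ and integrating, and noting $-2\gamma-2-\beta=-n$ (so the first term contributes $\gamma^2\int_{\Omega_1}u^2\rho_-^{-2-\beta}\,dx$) and $-2\gamma-1-\beta=1-n$, the claim over $\Omega_1$ reduces to $R_0:=\gamma\int_{\Omega_1}\rho_-^{1-n}\langle\nabla(-\rho_-),d(v^2)\rangle\,dx\ge0$. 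Here one uses that, the space being flat with $\mathbf S\equiv0$ and having no cut locus, the comparison \eqref{1.2} is an \emph{equality} for the reverse metric $\widetilde F$, namely $\widetilde\Delta\widetilde\rho_+=(n-1)/\widetilde\rho_+$; since $\rho_-=\widetilde\rho_+$ and $\Delta(-\rho_-)=-\widetilde\Delta\widetilde\rho_+$ by Lemma~\ref{keylemma}, this gives $\Delta(-\rho_-)=-(n-1)/\rho_-$ on $M\setminus\{p\}$. Combined with $\langle\nabla(-\rho_-),d\rho_-\rangle=-F^2(\nabla(-\rho_-))=-1$, the product rule for $\di$ yields $\di\big(\rho_-^{1-n}\nabla(-\rho_-)\big)=(n-1)\rho_-^{-n}-(n-1)\rho_-^{-n}=0$ on $M\setminus\{p\}$; integrating $R_0$ by parts via \eqref{1.4}/\eqref{2.7''} — with $v^2=0$ on $\partial\Omega_1$ and the contribution on a small sphere $\{\rho_-=\epsilon\}$ of order $O(v^2(p))=0$ as $\epsilon\to0^+$, because $\gamma>0$ makes $v$ continuous with $v(p)=0$ — one gets $R_0=0$. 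The region $\Omega_2$ is handled identically with $\rho_u=\rho_+$ and $v=u\rho_+^{\gamma}<0$ (so $v^2>0$), now using $F^*(d\rho_+)=1$, $\langle\nabla\rho_+,d\rho_+\rangle=F^2(\nabla\rho_+)=1$ and $\Delta\rho_+=(n-1)/\rho_+$, which make $\di\big(\rho_+^{1-n}\nabla\rho_+\big)=0$ and force the corresponding cross term to vanish. Summing over $\Omega_1,\Omega_2,\Omega_3$ gives \eqref{5.3}.

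For sharpness I would reproduce Step~2 of Theorem~\ref{Hardy1}: fix $0<\epsilon<r<R$, a cut-off $\psi\in C^\infty_0(M)$ with $\psi\equiv1$ on $B^-_p(r)$ and $\text{supp}(\psi)=B^-_p(R)$, and set $u_\epsilon:=\psi\cdot[\max\{\epsilon,\rho_-\}]^{-\gamma}\ge0$, so that $\rho_{u_\epsilon}=\rho_-$ on $\text{supp}(u_\epsilon)$. In the polar coordinates of $\widetilde F$ at $p$ one has $d\mathfrak m=e^{-\widetilde\tau(y)}r^{n-1}\,dr\wedge d\nu_p(y)$ \emph{exactly} (flatness together with $\mathbf S\equiv0$), whence $\int_{B^-_p(r)\setminus B^-_p(\epsilon)}\rho_-^{-n}\,d\mathfrak m=C_p\ln(r/\epsilon)\to+\infty$ as $\epsilon\to0^+$, with $C_p=\int_{\widetilde S_pM}e^{-\widetilde\tau(y)}\,d\nu_p(y)>0$; hence the Rayleigh quotient of $u_\epsilon$ tends to $\gamma^2=\big(\tfrac{n-2-\beta}{2}\big)^2$, which together with \eqref{5.3} proves that the constant is sharp. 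The one delicate point is the vanishing of the cross terms $R_0$ (and its $\Omega_2$ analogue): this is exactly where the \emph{equality} $\widetilde\Delta\widetilde\rho_+=(n-1)/\widetilde\rho_+$, special to the flat $\mathbf S$-free setting, and the sign condition $\gamma>0$ (i.e.\ $\beta<n-2$), which removes the boundary term at the pole, enter.
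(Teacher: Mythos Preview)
Your proof is correct and arrives at the same conclusion by essentially the same overall scheme (split into $\Omega_1,\Omega_2,\Omega_3$; apply \eqref{3.1111}; show the cross term $R_0$ vanishes; then run the test-function argument for sharpness). Where you diverge from the paper is in \emph{how} you kill $R_0$. You invoke the exact Laplacian identity $\widetilde\Delta\widetilde\rho_+=(n-1)/\widetilde\rho_+$ (which indeed follows from flatness and $\mathbf S\equiv0$, since the comparison \eqref{1.2} and its reverse are both equalities), whence $\di\!\big(\rho_-^{1-n}\nabla(-\rho_-)\big)=0$ and an integration by parts with boundary term $O(\epsilon^{2\gamma})\to0$ at the pole gives $R_0=0$. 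The paper instead performs three explicit coordinate changes: $x\mapsto X$ with $X^n=\sqrt{1-t^2}(x^n-t\rho_-/(1-t^2))$, then Euclidean polar coordinates on $X$, then $\varrho=-s\sqrt{1-t^2}$, and \emph{proves} the identity $\partial/\partial\varrho=\nabla(-\rho_-)$ by checking that radial geodesics of $F$ are straight lines in the $X$-coordinates. With this, $R_0$ becomes a one-dimensional integral $\int\partial_\varrho(v^2)\,d\varrho$ over the rays, which is manifestly zero.

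Your route is cleaner and shows transparently that the proposition is just the $k=h=0$ specialization of Theorem~\ref{Hardy1} with equality throughout; the paper's route is the ``direct calculation'' advertised at the start of the section, and has the benefit of yielding the measure in explicit polar form (giving $\mathfrak I_1=n\omega_n\ln(r/\epsilon)$ \emph{on the nose} rather than via $C_p(d\mathfrak m)$). One small cosmetic point: rather than appealing to \eqref{1.4} on $\Omega_1$ (whose boundary need not be smooth), it is tidier to follow the paper's general argument in Theorem~\ref{Hardy1} and pass directly to the analogue of \eqref{2.3}, where the vanishing of $R_0$ is then immediate from $\widetilde\rho_+\widetilde\Delta\widetilde\rho_+=n-1$; this is equivalent to what you wrote but sidesteps the boundary-regularity issue.
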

\begin{proof}
Since $F$ is a Randers metric, one gets (cf. \cite[Example 2.2.1]{Sh1})
\[
d\mathfrak{m}_{BH}=(1-t^2)^{\frac{n+1}2}dx,\ d\mathfrak{m}_{HT}=dx.
\]
So we just show (\ref{5.3}) under the Busemann-Hausdorff measure.

\noindent\textbf{Step 1.} As before, set $\Omega_1:=\{x\in M:\, u(x)>0\}$, $\Omega_2:=\{x\in M:\, u(x)<0\}$ and $\Omega_3:=\{x\in M:\, u(x)=0\}$. In order to obtain (\ref{5.3}), it suffices to prove that
\[
\int_{\Omega_i} \frac{F^{2}(\nabla u(x))}{\rho^\beta_u(x)}d\mathfrak{m}_{BH}(x)\geq\left(\frac{n-2-\beta}{2} \right)^2\int_{\Omega_i}  \frac{u^2(x)}{{\rho^{2+\beta}_u}(x)}d\mathfrak{m}_{BH}(x),\ 1\leq i\leq 3.\tag{5.4}\label{5.4}
\]

Obviously, (\ref{5.4}) is true when $i=3$. Now we consider the case of $i=1$. Firstly,
it follows from Example \ref{firstex} that
\[
F^{*2}(\xi+\eta)\geq F^{*2}(\xi)+2g^*_\xi(\xi,\eta),\ \forall \xi,\eta\in T^*M.\tag{5.5}\label{5.6}
\]
Now set $v=u\cdot{\rho_-}^\gamma$, where $\gamma=\frac{n-2-\beta}{2}$.
Let $\xi:=v\cdot\gamma {\rho_-}^{-\gamma-1}\cdot (-d{\rho_-})$ and $\eta:={\rho_-}^{-\gamma}\cdot dv$. Then it follows from (\ref{new5.3}) and (\ref{5.6}) that
\begin{align*}
&F^2(\nabla u)
\geq\gamma^2\cdot v^2\cdot{\rho_-}^{-2\gamma-2}+2\gamma\cdot{\rho_-}^{-2\gamma-1}\cdot v\cdot\langle \nabla ({-\rho_-}), dv\rangle
\end{align*}
holds on $\Omega_1-\{0\}$. Hence, we have
\begin{align*}
\int_{\Omega_1}  \frac{F^{2}(\nabla u)}{\rho^\beta_-(x)}d\mathfrak{m}_{BH}(x)&\geq \gamma^2\int_{\Omega_1}  \frac{u^2(x)}{\rho^{\beta+2}_-(x)} d\mathfrak{m}_{BH}(x)+R_0,\tag{5.6}\label{new 5.7}
\end{align*}
where
\begin{align*}
R_0:=&2\gamma \int_{\Omega_1}  {\rho_-}^{-2\gamma-1-\beta}(x)\cdot v(x)\cdot \langle \nabla (-{\rho_-}), dv\rangle\, d\mathfrak{m}_{BH}(x).
\end{align*}
In order to compute $R_0$, we  employ three coordinate transformations.
Firstly, set $X^\alpha:=x^\alpha$ for $1\leq \alpha \leq n-1$, and $X^n=\sqrt{1-t^2}\left( x^n-\frac{t \rho_-(x)}{1-t^2} \right)$.
Then (\ref{5.1}) yields that
\[
\frac{\partial X^n}{\partial x^n}=\frac{1-t\frac{\,x^n}{|x|}}{\sqrt{1-t^2}}>0,
\]
which implies $(X^i)$ is a coordinate system on $\mathbb{R}^n-\{0\}$.
In particular, one has
\begin{align*}
&|X|^2=\sum_{i=1}^n (X^i)^2=\frac{\rho_-^2(x) }{1-t^2},\tag{5.7}\label{5.7}\\
&dx^1\wedge \cdots \wedge dx^n=\frac{1}{\sqrt{1-t^2}}\left[ 1+\frac{tX^n}{|X|}  \right]dX^1\wedge\cdots \wedge dX^n.\tag{5.8}\label{5.8}
\end{align*}
Secondly,  we use the polar coordinates $(s,\theta^\alpha)\longleftrightarrow(X^1,\cdots,X^n)$, where $s=|X|$, $(\theta^\alpha)\in \mathbb{S}^{n-1}$.  It follows from (\ref{5.7}) and (\ref{5.8}) that
\begin{align*}
&\rho_-=s\sqrt{1-t^2},\tag{5.9}\label{5.9}\\
&d\mathfrak{m}_{BH}=(1-t^2)^{\frac{n}2}\left[1+t\cdot h(\theta^\alpha) \right]s^{n-1}ds\wedge d\nu_{\mathbb{S}^{n-1}},\tag{5.10}\label{5.10}
\end{align*}
where $h(\theta^\alpha):=X^n/|X|$ and $d\nu_{\mathbb{S}^{n-1}}$ is the area measure of $\mathbb{S}^{n-1}$.
Thirdly, set $\varrho:=-s\sqrt{1-t^2}$.  Then we have
$\varrho=-\rho_-$ and
\[
d\mathfrak{m}_{BH}=(-1)^n \left[1+t\cdot h(\theta^\alpha) \right]\varrho^{n-1}d\varrho\wedge d\nu_{\mathbb{S}^{n-1}}.\tag{5.11}\label{5.11}
\]

Now we claim that
\[
\frac{\partial}{\partial \varrho}=\nabla(-\rho_-).\tag{5.12}\label{5.12}
\]
In fact, it follows \cite[Example 3.2.1]{Sh1} and (\ref{5.1}) that
\[
\nabla(-\rho_-)(x_0)=-\frac{x_0^i\frac{\partial}{\partial x^i}}{|x|-tx^n}=-\frac{x_0}{\rho_-(x_0)}.\tag{5.13}\label{5.13}
\]
On the other hand, let $\gamma(\varsigma)$ be the unit speed (minimal) geodesic from $x_0$ to $0$. It is not hard to see that
\[
\gamma(\varsigma)=\left(1-\frac{\varsigma}{\rho_-(x_0)}  \right)x_0, \ \varsigma\in \left[0, \rho_-(x_0)\right],
\]
which together with (\ref{5.13}) yields
\[
\nabla(-\rho_-)(x_0)=\dot{\gamma}(0).
\]
Using (\ref{5.1}), one can check that $\gamma(\varsigma)$ is  a straight line from $\left( x_0^\alpha, \sqrt{1-t^2}\left(x^n_0-\frac{t\rho_-(x_0)}{1-t^2}\right) \right)$ to $0$
under the coordinate system $(X^i)$. Hence,
\begin{align*}
&\langle \nabla \varrho, d\theta^\alpha\rangle=\langle \nabla(-\rho_-), d\theta^\alpha\rangle=\left.\frac{d}{d\varsigma}\right|_{\varsigma=0}\theta^\alpha(\gamma(\varsigma))=0,\\
&\langle \nabla \varrho, d\varrho\rangle=\langle \nabla(-\rho_-), d(-\rho_-)\rangle=F^2(\nabla(-\rho_-))=1,
\end{align*}
which implies
\[
\nabla(-\rho_-)=\nabla \varrho=\frac{\partial}{\partial \varrho}.
\]
So the claim is true.

Under the coordinate system $(\varrho,\theta^\alpha)$,
we can suppose that
\[
\Omega_1\mapsto\{(\varrho,y): y\in \mathcal {S}\subset \mathbb{S}^{n-1},\, t^1_y< \varrho < t^2_y,\cdots, t^k_y< \varrho< t^{k+1}_y\}.
\]
Thus, $u(\varrho,y)>0$ for $t^1_y< \varrho < t^2_y,\cdots, t^k_y< \varrho< t^{k+1}_y$ and
$\lim_{\varrho\rightarrow t^i_y}v(\varrho,y)=0$.
Hence, (\ref{5.11}) together with (\ref{5.12}) yields
\begin{align*}
R_0
=(-1)^n\gamma \int_{\mathcal {S}}\left[1+t\cdot h(\theta^\alpha) \right] \left(\sum_{i}\int_{t^{i}_y}^{t^{i+1}_y}\frac{\partial v^2}{\partial \varrho} d \varrho\right)d\nu_{\mathbb{S}^{n-1}}=0,
\end{align*}
which together with (\ref{new 5.7}) implies that (\ref{5.4}) is true when $i=1$.
Similarly, one can show that (\ref{5.4}) holds for $i=2$.

\noindent\textbf{Step 2.} Now we show that (\ref{5.3}) is sharp.
Note that $\mathfrak{i}(M)=+\infty$. Therefore, it follows from (\ref{5.1}) that
\begin{align*}
B^-_0(R)=\left\{x:\,\rho_-(x)<R\right\},\ \forall R>0.\tag{5.14}\label{5.14}
\end{align*}
Now we  choose any $0<\epsilon<r<R$ and a cut-off function $\psi$ with $\text{supp}(\psi)=B^-_0(R)$ and $\psi|_{B^-_0(r)}\equiv1$. See \cite[Lemma 1, p.26]{CCL} for the construction of a cut-off function.
Set
\[
u_\epsilon(x):=\left[\max\left\{\epsilon,\rho_-(x)\right\}\right]^{-\gamma},\ u(x):=\psi(x)\cdot u_\epsilon(x),
\]
where $\gamma:=\frac{n-2-\beta}{2}$.
Then we set
\begin{align*}
I_1(\epsilon):=\int_M \frac{F^2(\nabla u)}{\rho^\beta_u(x)}d\mathfrak{m}_{BH}=:\gamma^2\mathfrak{I}_1+\mathfrak{I}_2,\tag{5.15}\label{5.15}
\end{align*}
where
\begin{align*}
\mathfrak{I}_1:=\int_{B^-_p(r)\backslash B^-_p(\epsilon)}{\rho^{-n}_-}d\mathfrak{m}_{BH},\ \mathfrak{I}_2:=\int_{B^-_p(R)\backslash B^-_p(r)}\frac{F^2(\nabla (\psi \rho^{-\gamma}_-))}{\rho^\beta_-(x)}d\mathfrak{m}_{BH}<\infty.
\end{align*}
It follows from (\ref{5.9}), (\ref{5.10}), (\ref{5.14}) and $\int_{\mathbb{S}^{n-1}}h\, d\nu_{\mathbb{S}^{n-1}}=0$ that
\begin{align*}
\mathfrak{I}_1=
(1-t^2)^{\frac{n}{2}}\int^{\frac{r}{\sqrt{1-t^2}}}_{\frac{\epsilon}{\sqrt{1-t^2}}}\frac{s^{n-1}}{(s\sqrt{1-t^2})^n}ds\int_{\mathbb{S}^{n-1}}d\nu_{\mathbb{S}^{n-1}}=n\omega_n\ln\left(\frac{r}{\epsilon} \right).
\end{align*}
Similarly, one has
\begin{align*}
I_2(\epsilon):=\int_M \frac{u^2(x)}{\rho^{2+\beta}_u(x)}d\mathfrak{m}_{BH}\geq \int_{B^-_0(r)\backslash B^-_0(\epsilon)}\rho^{-n}_-(x)\,d\mathfrak{m}_{BH}=n\omega_n\ln\left(\frac{r}{\epsilon} \right).
\end{align*}
Hence, it follows from (\ref{5.3}) and (\ref{5.15}) that
\[
\gamma^2\leq \inf_{u\in C^\infty_0(M)\backslash\{0\}}\frac{\int_M  \frac{F^{2}(\nabla u)}{\rho^\beta_u(x)}d\mathfrak{m}}{\int_M \frac{u^2(x)}{{\rho^{2+\beta}_u}(x)}d\mathfrak{m}}\leq \lim_{\epsilon\rightarrow 0^+}\frac{I_1(\epsilon)}{I_2(\epsilon)}=\gamma^2.
\]
\end{proof}

\section{Acknowledgements}
This work was supported by the National Natural Science Foundation of China (No. 11501202; No. 11471246; No. 11671352), and Shanghai NSF in China (No. 17ZR1420900).

\end{document}